\providecommand{\tabularnewline}{\\}
\numberwithin{equation}{section}
\numberwithin{figure}{section}
\theoremstyle{plain}
\newtheorem{thm}{\protect\theoremname}
\theoremstyle{remark}
\newtheorem{rem}[thm]{\protect\remarkname}
\theoremstyle{definition}
\newtheorem{defn}[thm]{\protect\definitionname}
\theoremstyle{plain}
\newtheorem{lem}[thm]{\protect\lemmaname}
\newcommand{\REV}[1]{#1}
\providecommand{\definitionname}{Definition}
\providecommand{\lemmaname}{Lemma}
\providecommand{\remarkname}{Remark}
\providecommand{\theoremname}{Theorem}
\begin{document}
\global\long\def\ve{\varepsilon}%
\global\long\def\R{\mathbb{R}}%
\global\long\def\Rn{\mathbb{R}^{n}}%
\global\long\def\Rd{\mathbb{R}^{d}}%
\global\long\def\E{\mathbb{E}}%
\global\long\def\P{\mathbb{P}}%
\global\long\def\bx{\mathbf{x}}%
\global\long\def\vp{\varphi}%
\global\long\def\ra{\rightarrow}%
\global\long\def\smooth{C^{\infty}}%
\global\long\def\Tr{\mathrm{Tr}}%
\global\long\def\bra#1{\left\langle #1\right|}%
\global\long\def\ket#1{\left|#1\right\rangle }%
\global\long\def\Re{\mathrm{Re}}%
\global\long\def\Im{\mathrm{Im}}%
\global\long\def\bsig{\boldsymbol{\sigma}}%
\global\long\def\btau{\boldsymbol{\tau}}%
\global\long\def\bmu{\boldsymbol{\mu}}%
\global\long\def\bx{\boldsymbol{x}}%
\global\long\def\bups{\boldsymbol{\upsilon}}%
\global\long\def\bSig{\boldsymbol{\Sigma}}%
\global\long\def\bt{\boldsymbol{t}}%
\global\long\def\bs{\boldsymbol{s}}%
\global\long\def\by{\boldsymbol{y}}%
\global\long\def\brho{\boldsymbol{\rho}}%
\global\long\def\ba{\boldsymbol{a}}%
\global\long\def\bb{\boldsymbol{b}}%
\global\long\def\bz{\boldsymbol{z}}%
\global\long\def\bc{\boldsymbol{c}}%
\global\long\def\balpha{\boldsymbol{\alpha}}%
\global\long\def\bbeta{\boldsymbol{\beta}}%
\global\long\def\T{\mathrm{T}}%
\global\long\def\trip{\vert\!\vert\!\vert}%
\global\long\def\lrtrip#1{\left|\!\left|\!\left|#1\right|\!\right|\!\right|}%

\author[1]{P. Michael Kielstra}
\author[1,2]{Michael Lindsey}
\affil[1]{University of California, Berkeley}
\affil[2]{Lawrence Berkeley National Laboratory}
\renewcommand\Affilfont{\itshape\small}

\title{Gaussian process regression with log-linear scaling for common non-stationary kernels }
\maketitle
\begin{abstract}
We introduce a fast algorithm for Gaussian process regression in low
dimensions, applicable to a widely-used family of non-stationary kernels.
The non-stationarity of these kernels is induced by arbitrary spatially-varying
vertical and horizontal scales. In particular, any stationary kernel
can be accommodated as a special case, and we focus especially on
the generalization of the standard Mat\'ern kernel. Our subroutine for
kernel matrix-vector multiplications scales almost optimally as $O(N\log N)$,
where $N$ is the number of regression points. Like the recently developed
equispaced Fourier Gaussian process (EFGP) methodology, which is applicable
only to stationary kernels, our approach exploits non-uniform fast
Fourier transforms (NUFFTs). We offer a complete analysis controlling
the approximation error of our method, and we validate the method's
practical performance with numerical experiments. In particular we
demonstrate improved scalability compared to to state-of-the-art rank-structured
approaches in spatial dimension $d>1$.
\end{abstract}

\section{Introduction\label{sec:Introduction}}

Gaussian Process Regression (GPR) is widely used across many scientific
fields \cite{gpex1,gpex2,foreman-gpex3,gpex4,gpex5,rasmussen_gaussian_2006}
as a framework for inferring a function $f:\R^{d}\ra\R$ from noisy
observations at an arbitrary collection of scattered points $x_{1},\ldots,x_{N}\in\R^{N}$.
The success of GPR owes to its convenient linear-algebraic algorithmic
formulation and its capacity for interpretable uncertainty quantification.

GPR is based on the selection of a prior distribution over functions,
induced by a choice of positive definite kernel \cite{rasmussen_gaussian_2006}
$\mathcal{K}(x,y)$ defined for $x,y\in\R^{d}$. The key operations
of GPR can be phrased linear-algebraically in terms of the kernel
matrix $\mathbf{K}=(K(x_{i},x_{j}))_{i,j=1}^{N}$. In particular,
assuming a noise model in which the observations $y_{n}=f(x_{n})+\epsilon_{n}$
are perturbed from the true values by independent and identically
distributed Gaussian noise terms $\epsilon_{n}\sim\mathcal{N}(0,\eta^{2})$,
the key algorithmic step in GPR is the solution of the linear system
\begin{equation}
\left(\mathbf{K}+\eta^{2}I_{N}\right)\alpha=y,\label{eq:gprsolve}
\end{equation}
 where $y=(y_{1},\ldots,y_{N})^{\top}$ is the vector of observations.
In turn the mean of the posterior distribution over functions, given
the observations, can be recovered \cite{rasmussen_gaussian_2006}
from the solution vector $\alpha$ as 
\begin{equation}
\mu(x)=\sum_{n=1}^{N}\alpha_{n}\,\mathcal{K}(x,x_{n}).\label{eq:postmean}
\end{equation}
 Uncertainty of this estimator can be quantified using the covariance
function of the posterior, which can also be written linear-algebraically
\cite{rasmussen_gaussian_2006} as 
\begin{equation}
\Sigma(x,y)=\mathcal{K}(x,y)-\mathcal{K}(x,x_{m})\left[\left(\mathbf{K}+\eta^{2}I_{N}\right)^{-1}\right]_{mn}\mathcal{K}(x_{n},y).\label{eq:postcov}
\end{equation}

A key task in GPR is therefore is to perform linear solves involving
the matrix $\mathbf{K}+\eta^{2}I$. Naively, forming the matrix requires
$\Omega(N^{2})$ operations, and direct solvers require $\Omega(N^{3})$
operations. Since the matrix is positive definite, the conjugate gradient
(CG) method \cite{trefethen_numerical_1997} can be applied using
the dense matrix to potentially reduce the overall cost to $\Omega(N^{2})$,
assuming a condition number independent of $N$, and parallelism may
be exploited to further improve the practical scaling \cite{wang_pleiss}.

Our focus is on approaches with faster than quadratic scaling, which
requires the exploitation of some sort of structure of $\mathbf{K}$.
In particular, we hope for the nearly-optimal scaling of $\tilde{O}(N)$,
where $\tilde{O}$ indicates the omission of logarithmic factors.
Though we will review a few paradigms for fast GPR, we refer the reader
to \cite{greengard_equispaced_2023} for an excellent summary of the
literature.

The most straightforward paradigms include low-rank factorization
of $\mathbf{K}$ \cite{nystrom_uber_1930,rasmussen_gaussian_2006}
and sparse thresholding of\textbf{ $\mathbf{K}$} \cite{furrer_covariance_2006}.
However, there exist many scenarios in which neither the numerical
rank nor sparseness suffice for tractable computations. When the spatial
dimension $d$ is low, more sophisticated structured rank decompositions
have emerged as a paradigm for the compression of $\mathbf{K}$. These
decompositions can also permit direct algorithms for the required
linear solves. Structured formats used for GPR include the hierarchically
off-diagonal low-rank (HODLR) format, as well as the HSS and HBS formats
\cite{ambikasaran_fast_2016,minden_fast_2017,ho_flam_2020,martinsson_fast_2019}.
Such approaches are extremely effective in $d=1$, but their performance
can noticeably degrade even for $d=2$ as the relevant numerical ranks
can grow with both $d$ and $N$.

Greengard et al. \cite{greengard_equispaced_2023} recently introduced
a method for kernel matrix-vector multiplications (or matvecs) with
$\tilde{O}(N)$ cost that eschews both sparse and low-rank compression
entirely. Their equispaced Fourier Gaussian process (EFGP) approach
relies on non-uniform fast Fourier transforms (NUFFTs) \cite{BOYD1992243,DuttRokhlin}.
A detailed analysis controlling the error of EFGP is explored in \cite{barnett_uniform_2023}.
However, a major limitation of this approach is the assumption that
the kernel is \emph{stationary}, i.e., satisfies $\mathcal{K}(x,y)=\mathcal{K}(x-y)$.

In our own language, the idea of \cite{greengard_equispaced_2023}
can be summarized as follows. Suppose that we wish to compute an arbitrary
matvec $\mathbf{K}\alpha$. Observe the identity: 
\[
\left[\mathbf{K}\alpha\right]_{m}=\left[\mathcal{K}\sum_{n=1}^{N}\alpha_{n}\,\delta_{x_{n}}\right](x_{m}),
\]
 where we view $\mathcal{K}$ as an integral operator with kernel
$\mathcal{K}(x,y)$ and let $\delta_{x}$ denote the Dirac delta distribution
localized at $x$. Then letting $\mathcal{F}$ denote the unitary
Fourier transform (and $\mathcal{F}^{*}$ its inverse), we have equivalently
that
\begin{equation}
\left[\mathbf{K}\alpha\right]_{m}=\left[\mathcal{F}^{*}\hat{\mathcal{K}}f\right](x_{m}),\ \,\text{where }\ f:=\mathcal{F}\sum_{n=1}^{N}\alpha_{n}\delta_{x_{n}},\label{eq:efgp}
\end{equation}
 and $\hat{\mathcal{K}}:=\mathcal{F}\hat{\mathcal{K}}\mathcal{F}^{*}$.
Since $\mathcal{K}$ is stationary, it follows that $\hat{\mathcal{K}}$
is a diagonal operator. Meanwhile, $f$ as defined in (\ref{eq:efgp})
can be computed on a grid in Fourier space using a type 1 NUFFT (cf.
Appendix \ref{app:nuffts} for further background). Once $\hat{\mathcal{K}}f$
is formed on the Fourier grid, then the entries of $\mathbf{K}\alpha$
can be recovered via (\ref{eq:efgp}) using a type 2 NUFFT (again
refer to Appendix \ref{app:nuffts}). The total cost is $O(M^{d}+N\log N)$,
where $M$ is the number of discretization points per dimension in
Fourier space. Viewing $M$ as a constant controlling the error of
the method, the matvec scaling is $O(N\log N)$ as desired.

Our work aims to extend the success of EFGP to the commonly used family
of \emph{non-stationary} \emph{kernels} (especially, non-stationary,
\emph{Mat\'ern }kernels), developed in \cite{paciorek_nonstationary_2003,paciorek_c_nonstationary_2004}
and recently applied in contemporary settings requiring flexible kernel
design \cite{NoackEtAl2023, NoackEtAl2024}. \REV{Settings in which the non-stationary Mat\'ern kernel has been shown to be well-adapted include IR and neutron spectroscopy \cite{Noack_Sethian_2022} and climatology \cite{Paciorek_Schervish_2006}. Even without non-stationary capabilities, the classical Mat\'ern kernel has been successfully applied in fields ranging from astronomy and astrophysics \cite{Griffiths_2021, Benisty_2021} to nanofluid modeling \cite{Mukesh_2021} and chemical engineering \cite{Pustokhina_Seraj_Hafsan_Mostafavi_Alizadeh_2021}.}

Our strategy is made up of several intermediate steps that allow us to reduce,
like \cite{greengard_equispaced_2023}, to diagonal operations on
a Fourier grid, though the operations that we require are different.
First, we exploit the `Schoenberg' representation (cf. \cite[Theorem 2]{schoenberg_metric_1938-1}
as well as (\ref{eq:schoenberg}) below) of the positive definite
function that induces the non-stationary kernel \cite{paciorek_c_nonstationary_2004},
to essentially reduce to the case of non-stationary squared exponential
kernels. In fact, this Schoenberg representation is fundamental to the
derivation \cite{paciorek_c_nonstationary_2004} of the non-stationary
kernel family of interest. Second, we use interpolation by value of
the non-stationary scale function $\sigma(x)$ that defines our non-stationary
kernel (cf. (\ref{eq:isotrop}) below) to reduce the application of
the kernel to the task of applying several Gaussian convolutions,
which can be achieved as diagonal operations in Fourier space. Like
\cite{greengard_equispaced_2023}, we require NUFFTs of both type
1 and type 2 to pass from the scattered grid to the equispaced Fourier
grid and vice versa.

\REV{We prove exponential convergence of our approximation with respect to all hyperparameters of the algorithm that control the error. In terms of the target error $\epsilon$ as well as the Mat\'ern parameter $\nu$, cf. \eqref{eq:matern}, the computational complexity of the approach is $O(\epsilon^{-d p})$ where $d$ is the spatial dimension and $p$ is any exponent larger than $(2 \nu)^{-1}$. The scaling is due to the necessity of constructing a Fourier grid with $\epsilon^{-p}$ points per dimension.
For a rigorous statement and further detail, see Theorem \ref{thm:main} and the discussion that immediately follows it. Interestingly,
even in the special case of a stationary kernel, our analysis differs
from that of \cite{greengard_equispaced_2023} due to the substitution
of the Schoenberg representation of the kernel, which we discretize
(cf. Section \ref{sec:approxt} and Lemma \ref{lem:dt} below) with
rapidly-converging numerical quadrature. Although passing through the Schoenberg representation is apparently
a forced move in our analysis, due to the structure of our non-stationary
kernel family, ultimately our error analysis yields almost the same complexity as that of 
\cite{greengard_equispaced_2023}
for stationary Mat\'ern kernels, where a Fourier grid with $O(\epsilon^{-1/(2\nu)})$ points per dimension is likewise constructed.}

We demonstrate the effectiveness of our method with several numerical
experiments, and in particular we compare our results to the highly
performant FLAM package \cite{ho_flam_2020} which is based on hierarchical
low-rank decompositions.

As in \cite{greengard_equispaced_2023}, we do not discuss preconditioning.
Potentially, our method for kernel matvecs could be combined with
a preconditioner based on a low-accuracy hierarchical decomposition.
We highlight this direction, as well as other approaches for preconditioning,
as interesting topics for future research.

\subsection{Outline}

In Section \ref{sec:prelim} we present the family of non-stationary
kernels that are of interest in this work. In Section \ref{sec:kernelapprox}
we discuss our approximation framework for this kernel, which is based
on (1) quadrature for the Schoenberg representation, (2) interpolation
by value of the scale function, and (3) Fourier space discretization.
This framework motivates the explicit algorithm presented in Section
\ref{sec:algo}. We outline our error analysis in Section \ref{sec:error}
and present relevant numerical experiments and benchmarks in Section
\ref{sec:numerics}.

We provide a glossary of some of our key notation in Appendix \ref{app:glossary}
and review some background on NUFFTs in Appendix \ref{app:nuffts}.
The remaining appendices give more details on proofs that we defer
in the main body of the paper: the general integral representation of
the kernel in Appendix \ref{app:pres}, the construction of an explicit
Schoenberg representation for the Mat\'ern kernel specifically in Appendix
Appendix \ref{app:matern}, and the various technical components of our error
bound in Appendix \ref{app:lemmas}. The lemmas are synthesized in
Appendix \ref{app:main}, the proof of our main theorem on the error
bound.

\subsection{Acknowledgments}

This work was partially supported by the Applied Mathematics Program
of the US Department of Energy (DOE) Office of Advanced Scientific
Computing Research under contract number DE-AC02-05CH11231 (M.L.).

\section{Preliminaries \label{sec:prelim}}

A rich family of non-stationary kernels, introduced in \cite{paciorek_nonstationary_2003,paciorek_c_nonstationary_2004},
is specified by the general functional form 
\begin{equation}
\mathcal{K}(x,y)=w(x)\,w(y)\,\left|\Sigma(x)+\Sigma(y)\right|^{-1/2}\,\vp\left(\sqrt{(x-y)^{\top}\left[\Sigma(x)+\Sigma(y)\right]^{-1}(x-y)}\right),\label{eq:nonstat}
\end{equation}
 where $\Sigma(x)\succ0$ is an arbitrary $d\times d$ positive-definite-matrix-valued
function of the spatial variable $x\in\R^{d}$, $w(x)\geq0$ is an
arbitrary nonnegative-valued function, vertical bars indicate the
matrix determinant, and $\vp:[0,\infty)\ra\R$ is an arbitrary \emph{radial
positive definite} function, meaning that 
\[
\Phi(x)=\vp(\Vert x\Vert)
\]
 is a positive definite function in the sense of \cite{schoenberg_posdef}.

It is known by a theorem of Schoenberg \cite[Theorem 2]{schoenberg_metric_1938-1}
that $\vp$ is radial positive definite if and only if it can be written
as 
\begin{equation}
\vp(r)=\int_{0}^{\infty}e^{-r^{2}s^{2}}\,d\mu(s),\label{eq:schoenberg}
\end{equation}
 where $\mu$ is a finite nonnegative Borel measure on $[0,\infty)$.
This characterization motivates the intuitive understanding of the
family (\ref{eq:nonstat}) as being generated by nonnegative linear
combinations of `non-stationary squared exponential kernels' of the
form
\[
\mathcal{K}(x,y)=w(x)\,w(y)\,\left|\Sigma(x)+\Sigma(y)\right|^{-1/2}e^{-\frac{1}{2}(x-y)^{\top}\left[\Sigma(x)+\Sigma(y)\right]^{-1}(x-y)}.
\]
 The Gaussian processes induced by such kernels are themselves derived
\cite{paciorek_c_nonstationary_2004} by smoothing a white noise process
with a Gaussian windowing function with spatially dependent covariance.  \REV{More generally, representing a function as an integral over a family of Gaussians, to be approximated with a discrete sum, is a common technique; see, for example, \cite{beylkin_approximation_2005} and \cite{beylkin_approximation_2010}.}

In this work, we focus on the important special case of \emph{non-stationary
isotropic kernels}, in which the structure of $\Sigma(x)$ simplifies
as 
\begin{equation}
\Sigma(x)=\sigma^{2}(x)\,I_{d},\label{eq:isotrop}
\end{equation}
 where $\sigma(x)>0$ is scalar-valued. By a linear change of the
spatial variable, it is easy to reduce to this case from the more
general case 
\[
\Sigma(x)=\sigma^{2}(x)\,\Sigma_{0},
\]
 where $\Sigma_{0}$ is an arbitrary $d\times d$ positive semidefinite
matrix, independent of $x$.

With regard to the selection of $\vp$, of particular interest is
the case of non-stationary Mat\'ern kernels \cite{paciorek_c_nonstationary_2004},
induced by the choice $\vp=\vp_{\nu}$: 
\begin{equation}
\varphi_{\nu}(r)=\frac{1}{2^{\nu-1}\Gamma(\nu)}\left|\sqrt{2\nu}\,r\right|^{\nu}K_{\nu}\left(\sqrt{2\nu}\,r\right),\label{eq:matern}
\end{equation}
where $\nu>0$ is a fixed parameter that governs the smoothness of
kernel and $K_{\nu}$ is the modified Bessel function of the second
kind. (Everywhere, the letter $K$ is reserved for kernel-related
quantities; we use it for the Bessel function only here.)

In general, a function drawn from a GP specified by a Mat\'ern kernel
with $\nu>0$ will be continuous and $\lceil\nu\rceil-1$ times differentiable
\cite[Section 4.2]{rasmussen_gaussian_2006}. In practice, the typically
chosen values of $\nu$ are the half-integers $\frac{1}{2},\,\frac{3}{2},\,\frac{5}{2}\dots$,
since for these values, the Bessel function $K_{\nu}$ admits a closed-form
expression that can be easily evaluted. Since we deal with $\vp_{\nu}$
only via its Schoenberg representation (\ref{eq:schoenberg}), which
we explicitly construct in Appendix \ref{app:pres}, there is no reason
to favor half-integer $\nu$ in our implementation, though the cost
of our algorithm will increase without bound in the $\nu\ra0$ limit.
We remark that $\nu=\frac{1}{2}$ is the lowest value that appears
in common practice, corresponding to the case $\vp(r)\propto e^{-r}$
of the exponential kernel. Meanwhile, in the $\nu\ra\infty$ limit, we recover \cite[Section 4.2]{rasmussen_gaussian_2006}
the squared exponential kernel function 
\begin{equation}
\vp(r)=e^{-\frac{r^{2}}{2}},\label{eq:squaredexp}
\end{equation}
 which is of special interest.

\section{Approximation of the kernel\label{sec:kernelapprox}}

A general non-stationary isotropic kernel in the sense of Section
\ref{sec:prelim} (cf. (\ref{eq:isotrop})) can be written 
\begin{equation}
\mathcal{K}(x,y)=w(x)\,w(y)\,\left(2\pi\left[\sigma^{2}(x)+\sigma^{2}(y)\right]\right)^{-d/2}\,\vp\left(\frac{\vert x-y\vert}{\sqrt{\sigma^{2}(x)+\sigma^{2}(y)}}\right),\label{eq:nonstat2}
\end{equation}
 where $\sigma(x)>0$ and $w(x)\geq0$ are arbitrary. We will assume
upper and lower bounds $\sigma_{\max}$ and $\sigma_{\min}>0$ such
that 
\[
\sigma(x)\in[\sigma_{\min},\sigma_{\max}]
\]
 for all $x\in\R^{d}$. As we shall see, the ratio 
\begin{equation}
\kappa:=\frac{\sigma_{\max}}{\sigma_{\min}}\label{eq:kappa}
\end{equation}
 will in part control the numerical difficulty of representing such
a non-stationary isotropic kernel.

Note that, without any loss of generality, we have absorbed a factor
into the weight function $w(x)$ in order to ease certain manipulations
downstream in the discussion.

\REV{
In order to approximate the integral kernel $\mathcal{K}$ in a framework that will permit fast kernel matrix-vector multiplications, we will adopt the following strategy. First we represent the function $\varphi(r)$ as an integral over a family of Gaussians parametrized by a variable $t$. Then, we discretize the integral, which essentially reduces the treatment of $\mathcal{K}$ to the case where $\vp$ is Gaussian in  \eqref{eq:nonstat2}. In order to remove the non-stationarity in \eqref{eq:nonstat2} due to the spatial dependence of $\sigma (x)$, we approximate $\sigma(x)$ with Chebyshev interpolation. This step allows us to approximate any Gaussian integral kernel with spatially-varying standard deviation as a weighted sum of stationary Gaussian convolutions. To apply the integral operator $\mathcal{K}$, these reductions leave us with a batch of what are now explicitly Gaussian convolutions, which we can compute cheaply using NUFFTs.}

\subsection{Analytical integration in $t$}

Let us suppose for simplicity that $\vp$ is induced in (\ref{eq:schoenberg})
by an absolutely continuous measure $\mu$, as is the case in all
important applications that we shall highlight below. Then it is possible
to write 
\begin{equation}
\vp(r)=\int e^{-\frac{r^{2}}{2\chi^{2}(t)}}\,u(t)\,dt\label{eq:schoenberg2}
\end{equation}
 for suitable functions $\chi:\R\ra(0,\infty)$ and $u:\R\ra[0,\infty)$.
In fact, such a representation is non-unique via change of variables,
but later we shall consider explicit representations that allow for
convenient numerical discretization of the integral $dt$.

In this case, the kernel (\ref{eq:nonstat2}) is recovered exactly
by the representation 
\begin{equation}
\mathcal{K}=\int_{\R}\mathcal{B}_{t}\mathcal{B}_{t}^{*}\,v(t)\,dt,\label{eq:pres}
\end{equation}
 where each $\mathcal{B}_{t}$ (for $t\in\R$) is an integral kernel
defined by 
\begin{equation}
\mathcal{B}_{t}(x,z)=w(x)\left[2\pi\,\sigma^{2}(x)\right]^{-d/2}\,e^{-\frac{\vert x-z\vert^{2}}{2\sigma(x)^{2}\chi(t)^{2}}},\label{eq:Bt}
\end{equation}
 and moreover 
\begin{equation}
v(t):=\chi^{-d}(t)\,u(t),\label{eq:v}
\end{equation}
 as we verify in Appendix \ref{app:pres}. Note that we view $\mathcal{B}_{t}$
as both an integral operator as well as a function $\R^{d}\times\R^{d}\ra\R$
defining the integral kernel of this operator. We will maintain calligraphic
notation for such objects, including $\mathcal{K}$ as well. 

In particular, we compute (cf. (\ref{eq:BtBtstar})) that 
\begin{equation}
\left(\mathcal{B}_{t}\mathcal{B}_{t}^{*}\right)(x,y)\,v(t)=w(x)\,w(y)\left(2\pi\left[\sigma^{2}(x)+\sigma^{2}(y)\right]\right)^{-d/2}\,e^{-\frac{\vert x-y\vert^{2}}{2\left(\sigma(x)^{2}+\sigma(y)^{2}\right)\chi(t)^{2}}}\,u(t).\label{eq:BtBtstar0}
\end{equation}

Meanwhile, as we prove in Appendix \ref{app:matern}, the Mat\'ern function
(\ref{eq:matern}) is recovered exactly by the choice 
\begin{equation}
u(t):=\frac{1}{\Gamma\left(\nu\right)}\,e^{\nu t-e^{t}},\quad\chi(t):=\frac{1}{\sqrt{\nu}}\,e^{t/2}.\label{eq:maternpres}
\end{equation}

In the special case of the squared exponential kernel (\ref{eq:squaredexp}),
there is no real need for integration in $t$. We can formally take
$u(t)=\delta(t)$ and $\chi^{2}(t)\equiv1$ in (\ref{eq:schoenberg2})
to recover this case. 

\subsection{Numerical integration in $t$ \label{sec:approxt}}

We now discuss how to discretize the integral with respect to $t$
in the integral representation (\ref{eq:pres}) of the kernel, in the
Mat\'ern case (\ref{eq:maternpres}). 

In fact, we shall approximate the integral with a simple trapezoidal
Riemann sum:
\begin{equation}
\mathcal{K}\approx\sum_{j=0}^{N_{t}}\mathcal{B}_{t_{j}}\mathcal{B}_{t_{j}}^{*}\,v_{j},\label{eq:trapsum}
\end{equation}
 where 
\[
v_{j}=\begin{cases}
v(t_{j})\,\Delta t, & j\in\{1,\ldots N_{t}-1\},\\
v(t_{j})\,\frac{\Delta t}{2}, & j\in\{0,N_{t}\}.
\end{cases}
\]
 Since the dependence on $t$ of $(\mathcal{B}_{t}\mathcal{B}_{t}^{*})(x,y)\,v(t)$
in (\ref{eq:BtBtstar0}) is analytic with exponentially decaying tails,
we can effectively restrict to a compact domain of integration, and
moreover we expect rapid convergence \cite{trefethen_traprule_2014}
as $\Delta t$ is refined. The error due to this discretization will
be controlled explicitly in Lemma \ref{lem:dt} below.

Specifically, after choosing $t_{\min}<t_{\max}$ bounding the effective
interval of integration and a number of integration points $N_{t}+1$,
we set $\Delta t=\frac{t_{\max}-t_{\min}}{N_{t}}$ and 
\begin{equation}
t_{j}=t_{\min}+\frac{j}{N_{t}}(t_{\max}-t_{\min}),\quad j=0,\ldots,N_{t}.\label{eq:dtscheme}
\end{equation}

In the special case of the squared exponential kernel (\ref{eq:squaredexp}),
there is no need to approximate the integral in $t$. We can recover
this case by taking $N_{t}=0$, $t_{\min}=t_{\max}=0$, and $v_{0}=1$,
as well as $\chi^{2}(0)=1$, as indicated above.

\subsection{Chebyshev interpolation in $\sigma$ \label{sec:approxsigma}}

Each operator $\mathcal{B}_{t}$ can be viewed as the composition
$\mathcal{B}_{t}=\mathcal{D}_{t}\,\mathcal{C}_{t}$ of the diagonal
multiplier 
\begin{equation}
\mathcal{D}_{t}(x,z)=w(x)\left[2\pi\,\sigma^{2}(x)\right]^{-d/2}\delta(x-z)\label{eq:Dt}
\end{equation}
 with the integral operator 
\begin{equation}
\mathcal{C}_{t}(x,z):=e^{-\frac{\vert x-z\vert^{2}}{2\sigma^{2}(x)\chi^{2}(t)}}.\label{eq:Ct}
\end{equation}
From a computational point of view, dealing with $\mathcal{C}_{t}$
is difficult because it is not precisely a convolution operator, due
to the non-stationary dependence $\sigma(x)$. Therefore we are motivated
to replace $\mathcal{C}_{t}$ with a linear combination of operators
which are themselves true convolution operators, and we achieve this
by Chebyshev interpolation with respect to the \emph{value} of $\sigma(x)$.

To wit, let 
\begin{equation}
\sigma_{k}=\frac{\cos\left(\pi k/N_{\sigma}\right)+1}{2}\left(\sigma_{\max}-\sigma_{\min}\right)+\sigma_{\min},\quad k=0,\ldots,N_{\sigma},\label{eq:chebscheme}
\end{equation}
 denote the Chebyshev-Lobatto grid with $N_{\sigma}+1$ points on
the interval $[\sigma_{\min},\sigma_{\max}]$. Let $P_{k}$ be the
Lagrange interpolating polynomials for this grid, i.e., the polynomials
of degree $N_{\sigma}$ such that $P_{k}(\sigma_{j})=\delta_{jk}$.

Then we approximate $\mathcal{C}_{t}(x,z)$ via Chebyshev interpolation
as 
\begin{equation}
\mathcal{C}_{t}^{(N_{\sigma})}(x,z):=\sum_{k=0}^{N_{\sigma}}P_{k}(\sigma(x))\,\mathcal{G}_{k,t}(x,z),\label{eq:CtNsig}
\end{equation}
 in which each term 
\begin{equation}
\mathcal{G}_{k,t}(x,z):=e^{-\frac{\vert x-z\vert^{2}}{2\sigma_{k}^{2}\chi^{2}(t)}}\label{eq:Gkt}
\end{equation}
 is a \emph{bona fide} Gaussian convolution operator.

In turn we may define 
\begin{equation}
\mathcal{B}_{t}^{(N_{\sigma})}:=\mathcal{D}_{t}\,\mathcal{C}_{t}^{(N_{\sigma})},\label{eq:BtNsig}
\end{equation}
 which approximates $\mathcal{B}_{t}$. By lumping together the diagonal
multiplier $\mathcal{D}_{t}$ (\ref{eq:Dt}) with the diagonal multiplier
$P_{k}(\sigma(x))$, we may may define a diagonal operator
\begin{equation}
\mathcal{W}_{k}(x,z):=\underbrace{w(x)\left[2\pi\,\sigma^{2}(x)\right]^{-d/2}P_{k}(\sigma(x))}_{=:w_{k}(x)}\,\delta(x-z)\label{eq:Wk}
\end{equation}
 such that 
\begin{equation}
\mathcal{B}_{t}^{(N_{\sigma})}=\sum_{k=0}^{N_{\sigma}}\mathcal{W}_{k}\,\mathcal{G}_{k,t}.\label{eq:BWG}
\end{equation}

\subsection{Fourier discretization \label{sec:approxgrid}}

The preceding discussion motivates us to perform fast computation
with the operator $\mathcal{B}_{t}^{(N_{\sigma})}\mathcal{B}_{t}^{(N_{\sigma})*}$.
However, the internal `$dz$' integration implicit in the product
of these two integral operators must be discretized. This discretization
will be achieved by choosing a grid in Fourier space. One motivation
for considering a Fourier discretization is that the convolution operations
introduced above can naturally be performed as pointwise multiplications
in Fourier space. 

In the following, we let $\mathcal{F}$ denote the unitary Fourier
transform on $\R^{d}$. Then we approximate 
\[
\mathcal{B}_{t}^{(N_{\sigma})}\mathcal{B}_{t}^{(N_{\sigma})*}\approx\mathcal{B}_{t}^{(N_{\sigma})}\mathcal{\mathcal{F}}^{*}\Pi^{*}\Pi\mathcal{F}\mathcal{B}_{t}^{(N_{\sigma})*}\,(\Delta\omega)^{d},
\]
 where $\Pi=\Pi^{(M,\Delta\omega)}$ acts by restricting its input
to a discrete grid defined in terms of two parameters, $M$ and $\Delta\omega$.
Concretely, the restriction $\Pi\hat{f}$ of a function $\hat{f}$
to the grid can be viewed as a function of $\mathbf{n}\in\{-M,\ldots,M\}^{d}$
obtained as $(\Pi\hat{f})[\mathbf{n}]=\hat{f}(\mathbf{n}\,\Delta\omega)$.
We will typically omit the dependence of $\Pi$ on $M$ and $\Delta\omega$
from the notation to avoid notational clutter. 

Note that the formal adjoint $\Pi^{*}$ acts on grid functions $a=a[\mathbf{n}]$
via 
\[
\Pi^{*}a=\sum_{\mathbf{n}\in\{-M,\ldots,M\}^{d}}a[\mathbf{n}]\,\delta_{\mathbf{n}\,\Delta\omega},
\]
 with equality in the sense of distributions, so that 
\[
\Pi^{*}\Pi\hat{f}=\sum_{\mathbf{n}\in\{-M,\ldots,M\}^{d}}\hat{f}(\mathbf{n}\,\Delta\omega)\,\delta_{\mathbf{n}\,\Delta\omega}.
\]

\subsection{Summary \label{sec:approximationsummary}}

In summary, we propose to approximate the kernel $\mathcal{K}$ (\ref{eq:nonstat2})
as 
\begin{equation}
\tilde{\mathcal{K}}:=\sum_{j=0}^{N_{t}}\tilde{v}_{j}\,\mathcal{B}_{t_{j}}^{(N_{\sigma})}\mathcal{F}^{*}\Pi^{*}\Pi\mathcal{F}\mathcal{B}_{t_{j}}^{(N_{\sigma})^{*}},\label{eq:Ktilde}
\end{equation}
 in which expression we now define 
\[
\tilde{v}_{j}:=(\Delta\omega)^{d}v_{j}
\]
 to simplify our notation.

In total, the hyperparameters that must be chosen to define this approximation
are $t_{\min},t_{\max},N_{t}$ (for numerical integration in $t$,
cf. Section \ref{sec:approxt} above), $N_{\sigma}$ (for interpolation
in $\sigma$, cf. Section \ref{sec:approxsigma} above), and $M,\Delta\omega$
(for Fourier discretization, cf. Section \ref{sec:approxgrid} above).
We will analyze the error as a function of these choices in Section
\ref{sec:error} below. Before doing so, we will derive in Section
\ref{sec:algo} a fast algorithm for kernel matrix-vector multiplication
using the approximation $\tilde{\mathcal{K}}$. 

\section{Fast algorithm for kernel matrix-vector multiplication \label{sec:algo}}

For arbitrary scattered points $x_{1},\ldots,x_{N}\in\R^{d}$, define
the $N\times N$ kernel matrix 
\begin{equation}
\mathbf{K}=\left(\mathcal{K}(x_{i},x_{j})\right)_{i,j=1}^{N}.\label{eq:boldK}
\end{equation}
 Our goal is to perform the matrix-vector multiplication $\mathbf{K}\alpha$
with nearly-linear scaling in $N$, the size of the dataset, where
$\alpha=(\alpha_{j})\in\R^{N}$ is an arbitrary vector.

To approximate this result, we may likewise define 
\begin{equation}
\tilde{\mathbf{K}}=\left(\tilde{\mathcal{K}}(x_{i},x_{j})\right)_{i,j=1}^{N},\label{eq:boldKtilde}
\end{equation}
 where $\tilde{\mathcal{K}}$ is defined as in (\ref{eq:Ktilde})
and the dependence on our approximation hyperparameters is again omitted
for notational clarity. We will explain how to compute $\tilde{\mathbf{K}}\alpha$
\emph{exactly }(ignoring only, for simplicity, the numerical error
in the application of various NUFFTs). Note that the positive semidefiniteness
of the approximate kernel matrix $\tilde{\mathbf{K}}$ is automatically
guaranteed from the representation (\ref{eq:Ktilde}) of $\tilde{\mathcal{K}}$.

\REV{Our derivation will involve first rewriting \eqref{eq:Ktilde} so that the convolutions by individual Gaussians take place in Fourier space. We will then order the sums over our discrete values of $\sigma$ and $t$ to optimize the implementation. This will give us an expression for $\tilde{\mathbf{K}}\alpha$ from which we can simply read off our algorithm by applying its constituent operators from right to left.}

\subsection{Derivation}

For fixed $\alpha$, define the distribution 

\[
f:=\sum_{i=1}^{N}\alpha_{i}\delta_{x_{i}}.
\]
 The first step is to realize that 
\begin{equation}
\left[\tilde{\mathbf{K}}\alpha\right]_{i}=(\tilde{\mathcal{K}}f)(x_{i}).\label{eq:realize}
\end{equation}

Now, inserting (\ref{eq:BWG}) into our representation (\ref{eq:Ktilde})
of $\tilde{\mathcal{K}}$, we expand: 
\begin{equation}
\tilde{\mathcal{K}}f=\sum_{k'=0}^{N_{\sigma}}\,\mathcal{W}_{k'}\sum_{j=0}^{N_{t}}\tilde{v}_{j}\mathcal{G}_{k',t_{j}}\mathcal{F}^{*}\Pi^{*}\sum_{k=0}^{N_{\sigma}}\Pi\mathcal{F}\mathcal{G}_{k,t_{j}}\mathcal{W}_{k}f.\label{eq:oof}
\end{equation}
We will insert resolutions of the identity $\mathcal{F}\mathcal{F}^{*}=\mathrm{Id}$
and $\mathcal{F}^{*}\mathcal{F}=\mathrm{Id}$ into (\ref{eq:oof})
to obtain 
\begin{equation}
\tilde{\mathcal{K}}f=\sum_{k'=0}^{N_{\sigma}}\,\mathcal{W}_{k'}\sum_{j=0}^{N_{t}}\tilde{v}_{j}\mathcal{F}^{*}\hat{\mathcal{G}}_{k',t_{j}}\Pi^{*}\sum_{k=0}^{N_{\sigma}}\Pi\hat{\mathcal{G}}_{k,t_{j}}\mathcal{F}\mathcal{W}_{k}f,\label{eq:oof2}
\end{equation}
 where 
\[
\hat{\mathcal{G}}_{k,t}:=\mathcal{F}\mathcal{G}_{k,t}\mathcal{F}^{*}
\]
 is the Fourier-space representation of the Gaussian convolution operator,
i.e., a diagonal multiplier by a suitable Gaussian. We will also define
\[
\hat{G}_{k,t}:=\Pi\hat{\mathcal{G}}_{k,t}\Pi^{*}
\]
 denote the suitable restriction of $\mathcal{G}_{k,t}$ to a diagonal
multiplier on our $(M,\Delta\omega)$-Fourier grid. Note that in fact
\[
\Pi\hat{\mathcal{G}}_{k,t}=\hat{G}_{k,t}\Pi,\quad\quad\hat{\mathcal{G}}_{k,t}\Pi^{*}=\Pi^{*}\hat{G}_{k,t}
\]
 from which facts, together with (\ref{eq:oof2}), it follows that
\begin{equation}
\tilde{\mathcal{K}}f=\sum_{k'=0}^{N_{\sigma}}\,\mathcal{W}_{k'}[\mathcal{F}\Pi]^{*}\overbrace{\sum_{j=0}^{N_{t}}\tilde{v}_{j}\hat{G}_{k',t_{j}}\sum_{k=0}^{N_{\sigma}}\hat{G}_{k,t_{j}}\underbrace{\Pi\mathcal{F}\left[\mathcal{W}_{k}f\right]}_{\text{type 1 NUFFT}}}^{\text{Fourier grid function}\ a_{k}=a_{k}[\mathbf{n}]}.\label{eq:oof3}
\end{equation}
 Observe that the underbraced expression, which is a function on the
Fourier grid, can be constructed exactly as the type 1 NUFFT (cf.
Appendix \ref{app:nuffts}, and recall from (\ref{eq:Wk}) the definition
of the diagonal multiplier $\mathcal{W}_{k}$) of the vector $c_{k}\in\R^{N}$
of values 
\begin{equation}
c_{k,i}:=w_{k}(x_{i})\,\alpha_{i},\label{eq:c}
\end{equation}
 associated to the scattered points $\{x_{i}\}_{i=1}^{N}$.

Using results of these $(N_{\sigma}+1)$ type 1 NUFFTs, we can form
the Fourier grid function $a_{k}=a_{k}[\mathbf{n}]$ indicated with
the overbrace in (\ref{eq:oof3}) by a sequence of pointwise multiplications
and summations. Given the result $a_{k}$, we now indicate the computation
that remains for determining $\left[\tilde{\mathbf{K}}\alpha\right]_{i}$,
following \ref{eq:realize}: 
\begin{equation}
\left[\tilde{\mathbf{K}}\alpha\right]_{i}=(\tilde{\mathcal{K}}f)(x_{i})=\sum_{k=0}^{N_{\sigma}}\,w_{k}(x_{i})\underbrace{\left([\mathcal{F}\Pi]^{*}a_{k}\right)(x_{i})}_{\text{type 2 NUFFT}}.\label{eq:oof4}
\end{equation}
 Observe that the collection of values $\left\{ \left([\mathcal{F}\Pi]^{*}a_{k}\right)(x_{i})\right\} _{i=1}^{N}$
indicated with the underbrace can be recovered precisely as the type
2 NUFFT (cf. Appendix \ref{app:nuffts}) of the Fourier grid function
$a_{k}$, evaluated on the scattered points $\{x_{i}\}_{i=1}^{N}$.

\subsection{Explicit algorithm and cost scaling}

We concretely summarize the algorithmic steps indicated above in the
discussion of (\ref{eq:oof3}) and (\ref{eq:oof4}).
\begin{enumerate}
\item [(1)] For each $k=0,\ldots,N_{\sigma}$, form the vector $c_{k}\in\R^{N}$,
following (\ref{eq:c}).
\begin{itemize}
\item [--]\textbf{ Cost scaling:} $O(N_{\sigma}N)$.
\end{itemize}
\item [(2)] For each $k=0,\ldots,N_{\sigma}$, form the Fourier grid function
$b_{k}=b_{k}[\mathbf{n}]$ as the type 1 NUFFT of $c_{k}$ on the
scattered points $\{x_{i}\}_{i=1}^{N}$. (These $(N_{\sigma}+1)$
type 1 NUFFTs can be batched in parallel.)
\begin{itemize}
\item [--]\textbf{ Cost scaling: }$O(N_{\sigma}M^{d}+N_{\sigma}N\log N)$,
cf. Appendix \ref{app:nuffts}.
\end{itemize}
\item [(3)] For each $k'=0,\ldots,N_{\sigma}$, form $a_{k'}:=\sum_{j=0}^{N_{t}}\tilde{v}_{j}\hat{G}_{k',t_{j}}\sum_{k=0}^{N_{\sigma}}\hat{G}_{k,t_{j}}b_{k}$.
\begin{itemize}
\item [--]\textbf{ Cost scaling:} $O(M^{d}N_{\sigma}\min(N_{\sigma},N_{t}))$,
cf. Remark \ref{rem:step3} below for further detail.
\end{itemize}
\item [(4)] For each $k=0,\ldots,N_{\sigma}$, form $\beta_{k}\in\R^{N}$
as the type 2 NUFFT of the the Fourier grid function $a_{k}$, evaluated
on the scattered points $\{x_{i}\}_{i=1}^{N}$. (These $(N_{\sigma}+1)$
NUFFTs can also be batched.)
\begin{itemize}
\item [--]\textbf{ Cost scaling:} $O(N_{\sigma}M^{d}+N_{\sigma}N\log N)$,
cf. Appendix \ref{app:nuffts}.
\end{itemize}
\item [(5)] Then $\tilde{\mathbf{K}}\alpha$ is finally recovered as $\sum_{k=0}^{N_{\sigma}}\,w_{k}(x_{i})\beta_{k}$.
\begin{itemize}
\item [--]\textbf{ Cost scaling: }$O(N_{\sigma}N)$. Note that $w_{k}(x_{i})$
can be precomputed in step 1.
\end{itemize}
\end{enumerate}
\vspace{2mm}
\begin{rem}
\emph{\label{rem:step3}Note that performing step $3$ directly requires
us to perform one matrix-vector multiplication of size $(N_{t}+1)\times(N_{\sigma}+1)$
and another of size $(N_{\sigma}+1)\times(N_{t}+1)$, for each point
on the Fourier grid. In total the cost scaling of this approach amounts
to $O(M^{d}N_{\sigma}N_{t})$.}

\emph{Alternatively, we may precompute the tensor 
\[
A_{k',k}[\mathbf{n}]=\sum_{j=0}^{N_{t}}\tilde{v}_{j}\hat{G}_{k',t_{j}}[\mathbf{n},\mathbf{n}]\,\hat{G}_{k,t_{j}}\,[\mathbf{n},\mathbf{n}]
\]
 with offline cost $O(M^{d}N_{\sigma}^{2}N_{t})$. Then once this
tensor is formed, we may form the Fourier grid functions $a_{k'}$
in terms of the $b_{k}$ as 
\[
a_{k'}[\mathbf{n}]=A_{k',k}[\mathbf{n}]\,b_{k}[\mathbf{n}]
\]
 with online cost $O(M^{d}N_{\sigma}^{2})$. This is useful when $(N_{t}+1)\geq(N_{\sigma}+1)/2$
since we need to perform only a single matrix-vector multiplication
of size $(N_{\sigma}+1)\times(N_{\sigma}+1)$ for each Fourier grid
point.}

\emph{Even when $N_{\sigma}$ becomes large, we comment that precomputation
may still be useful if it is possible to factorize each $A_{k',k}[\mathbf{n}]\approx\sum_{\alpha=1}^{r}R_{k'}^{\alpha}[\mathbf{n}]R_{k}^{\alpha}[\mathbf{n}]$
in low-rank form. This could allow the user to set $N_{t}$ very large,
oversampling the integral in $t$, and then reveal the rank that is
empirically required, rather than fixing it }a priori\emph{. We leave
further investigation of this point to future work.}
\end{rem}

\vspace{2mm}

In summary, the total cost scaling of a matvec is 
\begin{equation}
\label{eq:complexity}
O(N_{\sigma}N\log N+M^{d}N_{\sigma}\min(N_{\sigma},N_{t})),
\end{equation}
 though refer to Remark \ref{rem:step3} for a discussion of a potential
offline cost.

\REV{We comment that the fast Gauss transform (FGT) \cite{Greengard_Strain_1991, Spivak_Veerapaneni_Greengard_2010, Greengard_Jiang_Rachh_Wang_2024} might also be used to accelerate the applications of $\mathcal{G}_{k, t}$. However, we would have to run one FGT for each of the $O(N_tN_\sigma)$ Gaussians by which wish to convolve. Since the FGT cost scales linearly in the number of input points, an FGT-based algorithm would have an $O(N_tN_\sigma N)$ term in its cost-scaling expression.}

\section{Error analysis \label{sec:error}}

In Section \ref{sec:algo}, we presented a fast algorithm for matrix-vector
multiplication by the approximate kernel matrix $\tilde{\mathbf{K}}$
(\ref{eq:boldKtilde}). In this section, we want to bound the error
compared to multiplication by the true kernel matrix $\mathbf{K}$
(\ref{eq:boldKtilde})

\REV{We will accomplish this by bounding the error $\trip \mathcal{K}-\tilde{\mathcal{K}} \trip$ in a suitable norm $\trip \,\cdot \, \trip$, which in turn controls the error $\Vert\mathbf{K} - \tilde{\mathbf{K}}\Vert_p$ for any operator $p$-norm. We bound the norm $\trip \mathcal{K}-\tilde{\mathcal{K}} \trip$ in three stages:
\begin{itemize}
\item [(1)] bounding the error due to numerical integration in $t$, cf.
Section \ref{sec:approxt} above,
\item [(2)] bounding the error due to interpolation in $\sigma$, cf. Section
\ref{sec:approxsigma} above, and 
\item [(3)] bounding the error due to Fourier discretization, cf. Section
\ref{sec:approxgrid} above.
\end{itemize}}

For simplicity we will assume that our scattered points $\{x_{i}\}_{i=1}^{N}$
are contained in the bounding box $[-1,1]^{d}$. This assumption does
not lose any generality, as we can reduce to this scenario by shifting
and scaling the problem.

Now observe that for any $\alpha\in\R^{N}$ and any $p\in[1,\infty]$
\[
\Vert\mathbf{K}\alpha-\tilde{\mathbf{K}}\alpha\Vert_{p}\leq\Vert\mathbf{K}-\tilde{\mathbf{K}}\Vert_{p}\Vert\alpha\Vert_{p},
\]
 where $\Vert\,\cdot\,\Vert_{p}$ denotes the $\ell^{p}$ vector norm
as well as the corresponding operator norm. In fact, it will be most
convenient to bound the error $\mathbf{K}-\tilde{\mathbf{K}}$ in
the `uniform entrywise' norm, which is not an operator norm.
\begin{defn}
\emph{\label{def:triplenorm} For an $N\times N$ matrix $A$, we
define 
\[
\trip A\trip=\max_{i,j\in\{1,\ldots,N\}}\vert A_{ij}\vert.
\]
 Likewise, for an integral kernel $\mathcal{A}(x,y)$ which is a continuous
function of $(x,y)$, we define (overloading notation slightly) the
analogous norm 
\[
\trip\mathcal{A}\trip:=\sup_{x,y\in[-1,1]^{d}}\vert\mathcal{A}(x,y)\vert.
\]
}

Fortunately, the uniform entrywise norm on matrices controls all $p$-operator
norms as follows: 
\end{defn}

\begin{lem}
\label{lem:holder} For any $N\times N$ matrix $A$ and any $p\in[1,\infty]$,
we have $\Vert A\Vert_{p}\leq N\,\trip A\trip$.
\end{lem}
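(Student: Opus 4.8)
The plan is to prove the bound directly from the definition of the operator norm, passing through the $\ell^1$ norm as an intermediate and applying H\"older's inequality twice. Fix $p\in[1,\infty]$ and let $x\in\R^{N}$ be arbitrary; it suffices to show $\Vert Ax\Vert_{p}\leq N\,\trip A\trip\,\Vert x\Vert_{p}$, since taking the supremum over $\Vert x\Vert_{p}=1$ then gives the claim.

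First I would bound each output coordinate uniformly. Writing $(Ax)_{i}=\sum_{j=1}^{N}A_{ij}x_{j}$, the triangle inequality and the entrywise bound $\vert A_{ij}\vert\leq\trip A\trip$ give
\[
\vert(Ax)_{i}\vert\leq\sum_{j=1}^{N}\vert A_{ij}\vert\,\vert x_{j}\vert\leq\trip A\trip\sum_{j=1}^{N}\vert x_{j}\vert=\trip A\trip\,\Vert x\Vert_{1}.
\]
The key feature is that this bound is independent of $i$, which keeps the remaining step clean.

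Next I would convert the two discrepancies between $\ell^{1}$ and $\ell^{p}$ norms---one on the input side, one on the output side---into explicit powers of $N$. On the input side, H\"older's inequality applied to $x$ against the all-ones vector, with conjugate exponent $q=p/(p-1)$, gives $\Vert x\Vert_{1}\leq N^{1-1/p}\Vert x\Vert_{p}$. On the output side, raising the uniform coordinate bound to the $p$-th power and summing over the $N$ coordinates yields $\Vert Ax\Vert_{p}\leq N^{1/p}\,\trip A\trip\,\Vert x\Vert_{1}$. Combining the two, the exponents multiply as $N^{1/p}\cdot N^{1-1/p}=N$, independent of $p$, which is exactly the desired $\Vert Ax\Vert_{p}\leq N\,\trip A\trip\,\Vert x\Vert_{p}$.

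I do not anticipate a genuine obstacle; the only point requiring mild care is the endpoint $p=\infty$, where the input H\"older step degenerates to the elementary inequality $\Vert x\Vert_{1}\leq N\,\Vert x\Vert_{\infty}$ and the output norm becomes a maximum rather than a sum, but both limits behave gracefully and recover the same constant $N$. As an alternative route, one could instead compute $\Vert A\Vert_{1}$ and $\Vert A\Vert_{\infty}$ exactly as the maximal absolute column and row sums, bound each trivially by $N\,\trip A\trip$, and then cover intermediate $p$ by Riesz--Thorin interpolation; I prefer the direct double-H\"older argument above because it avoids invoking interpolation and dispatches all $p\in[1,\infty]$ simultaneously.
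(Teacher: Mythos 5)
Your proof is correct and takes essentially the same route as the paper's: both are elementary H\"older arguments that split the factor $N$ as $N^{1/p}\cdot N^{1/q}$. The paper applies H\"older once per row ($\vert a_{i}\cdot x\vert\leq\Vert a_{i}\Vert_{q}\Vert x\Vert_{p}$ with $\Vert a_{i}\Vert_{q}\leq N^{1/q}\,\trip A\trip$), whereas you apply it twice, routing through the uniform coordinate bound and $\Vert x\Vert_{1}\leq N^{1-1/p}\Vert x\Vert_{p}$; this is a cosmetic rearrangement of the same estimate.
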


\noindent The proof is given in Appendix \ref{app:lemmas} for completeness,
since this fact is not too frequently encountered.

Following Lemma \ref{lem:holder}, as well as the immediate fact that
$\trip\mathbf{K}-\tilde{\mathbf{K}}\trip\leq\trip\mathcal{K}-\tilde{\mathcal{K}}\trip$,
the error of our kernel matrix-vector multiplication algorithm can
be bounded as 
\[
\Vert\mathbf{K}\alpha-\tilde{\mathbf{K}}\alpha\Vert_{p}\leq N\,\trip\mathcal{K}-\tilde{\mathcal{K}}\trip
\]
 for any $p\in[1,\infty]$.

Thus\REV{, as indicated above,} we are motivated to bound $\trip\mathcal{K}-\tilde{\mathcal{K}}\trip$
. For simplicity we will assume that 
\begin{equation}
\vert w(x)\vert\leq1,\ \text{for all }x\in[-1,1]^{d}.\label{eq:w_assumption}
\end{equation}
 Evidently, the entrywise norm $\,\trip\mathcal{K}-\tilde{\mathcal{K}}\trip$
scales more generally with an additional factor factor of $\Vert w\Vert_{\infty}^{2}$
where $\Vert w\Vert_{\infty}$ indicates the $L^{\infty}$ norm
on the bounding box. We reduce to the case $\Vert w\Vert_{\infty}\leq1$
to avoid notational clutter. 

In our error analysis we will view $t_{\min}$, $t_{\max}$, $N_{t}$,
$N_{\sigma}$, $M$, and $\Delta\omega$ as the adjustable parameters,
cf. the summary of these choices in Section \ref{sec:approximationsummary}.
Meanwhile the dimension $d$ and the Mat\'ern parameter $\nu$ (\ref{eq:matern})
will be viewed as constant from the point of view of big-$O$ notation.
A glossary of commonly used notation in our analysis is provided in
Appendix \ref{app:glossary}. 

\subsection{Stage (1)}

The following lemma constitutes stage (1)\REV{, in which we bound the error introduced by discretizing the integral in $t$.}

\begin{lem}
\label{lem:dt} For the Mat\'ern function representation with $u$ and
$\chi$ as in (\ref{eq:maternpres}), the following bound holds for
any choice of $\alpha>0$ and $\delta\in(0,1)$: 
\[
\lrtrip{\mathcal{K}-\sum_{j=0}^{N_{t}}\mathcal{B}_{t_{j}}\mathcal{B}_{t_{j}}^{*}\,v_{j}}=O\left(\sigma_{\min}^{-d}\left[e^{-\alpha t_{\max}}+e^{\nu t_{\min}}+e^{-\frac{(1-\delta)\pi^{2}}{\Delta t}}\right]\right),
\]
 where $\Delta t=\frac{t_{\max}-t_{\min}}{N_{t}}$ is assumed to be
$O(1)$. 
\end{lem}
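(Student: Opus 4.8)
The plan is to reduce this operator-valued statement to a one-dimensional quadrature error bound for a scalar integrand and then control that error using the classical theory of the exponentially convergent trapezoidal rule \cite{trefethen_traprule_2014}. First, using the pointwise formula (\ref{eq:BtBtstar0}) together with the Mat\'ern presentation (\ref{eq:maternpres}), I would factor the integrand as $(\mathcal{B}_t\mathcal{B}_t^*)(x,y)\,v(t) = C(x,y)\,h_A(t)$, where $C(x,y) := w(x)\,w(y)\,(2\pi[\sigma^2(x)+\sigma^2(y)])^{-d/2}$, the reduced integrand is
\[
h_A(t) = \tfrac{1}{\Gamma(\nu)}\,e^{\nu t - e^t - A\nu e^{-t}},
\]
and $A = A(x,y) := \tfrac{\vert x-y\vert^2}{2(\sigma^2(x)+\sigma^2(y))} \ge 0$. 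Since $\sigma(x),\sigma(y)\ge\sigma_{\min}$ and $\vert w\vert\le 1$, one has $\vert C(x,y)\vert\le(4\pi)^{-d/2}\sigma_{\min}^{-d}=O(\sigma_{\min}^{-d})$, which is the source of the prefactor in the claim. Because the weights $v_j$ in (\ref{eq:trapsum}) carry the same factor $C(x,y)$, the node sum evaluated at $(x,y)$ equals $C(x,y)$ times the composite trapezoidal rule $T_{h_A}$ for $\int_{t_{\min}}^{t_{\max}} h_A$; hence the entrywise error equals $C(x,y)\,[\int_\R h_A - T_{h_A}]$, and it suffices to bound $\sup_{A\ge 0}\,\vert\int_\R h_A - T_{h_A}\vert$ uniformly in $A$.

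Next I would split this scalar error as $\int_\R h_A - T_{h_A} = (\int_\R h_A - S_\infty) + (S_\infty - T_{h_A})$, where $S_\infty := \Delta t\sum_{j\in\Z} h_A(t_{\min}+j\Delta t)$ is the whole-line trapezoidal sum. The whole-line error is handled by Poisson summation: $\int_\R h_A - S_\infty = -\sum_{k\ne 0} e^{2\pi i k t_{\min}/\Delta t}\,\hat{h}_A(2\pi k/\Delta t)$, so its magnitude is at most $\sum_{k\ne 0}\vert\hat{h}_A(2\pi k/\Delta t)\vert$. The key computation is that for $t = s + i\tau$ one has $\vert h_A(s+i\tau)\vert = \tfrac{1}{\Gamma(\nu)} e^{\nu s - e^s\cos\tau - A\nu e^{-s}\cos\tau}$; restricting to the strip $\vert\tau\vert\le(1-\delta)\tfrac\pi 2$ so that $\cos\tau \ge c_\delta := \cos((1-\delta)\tfrac\pi2)>0$, and discarding the nonpositive $A$-term, gives $\int_\R \vert h_A(s+i\tau)\vert\,ds \le c_\delta^{-\nu}$ uniformly in $A\ge 0$. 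Since $h_A$ is entire and decays as $\Re t\to\pm\infty$ inside this strip, shifting the contour to $\Im t = \mp(1-\delta)\tfrac\pi2$ yields $\vert\hat{h}_A(\xi)\vert\le c_\delta^{-\nu}\, e^{-(1-\delta)\frac\pi2\vert\xi\vert}$; summing the resulting geometric series (using $\Delta t=O(1)$) produces $O(e^{-(1-\delta)\pi^2/\Delta t})$, the third term, since $2\pi\cdot(1-\delta)\tfrac\pi2 = (1-\delta)\pi^2$.

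For the tail correction $S_\infty - T_{h_A}$, which consists of the node sums over $j\le -1$ and $j\ge N_t+1$ together with the half-weight endpoint contributions, I would use the uniform dominations $h_A(t)\le\tfrac{1}{\Gamma(\nu)}e^{\nu t}$ and $h_A(t)\le\tfrac{1}{\Gamma(\nu)}e^{\nu t - e^t}$ (both from $e^{-A\nu e^{-t}}\le 1$, and for the first also $e^{-e^t}\le 1$). The dominating function $e^{\nu t}$ is increasing, so a sum-to-integral comparison bounds the left tail by $\int_{-\infty}^{t_{\min}}\tfrac{1}{\Gamma(\nu)}e^{\nu t}\,dt = O(e^{\nu t_{\min}})$; the function $e^{\nu t - e^t}$ is decreasing for $t>\log\nu$, so (assuming $t_{\max}\ge\log\nu$, the complementary regime being trivial since then $e^{-\alpha t_{\max}}\gtrsim 1$ while the error is $O(\sigma_{\min}^{-d})$) the right tail is bounded by $\int_{t_{\max}}^\infty\tfrac{1}{\Gamma(\nu)}e^{\nu t - e^t}\,dt = \tfrac{1}{\Gamma(\nu)}\Gamma(\nu,e^{t_{\max}})$, whose doubly-exponential decay beats $e^{-\alpha t_{\max}}$ for every fixed $\alpha>0$. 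The same two tail integrals also bound the truncation pieces implicit in passing from $\int_\R$ to $\int_{t_{\min}}^{t_{\max}}$, so collecting the three contributions and multiplying by $\vert C(x,y)\vert=O(\sigma_{\min}^{-d})$ gives the claimed estimate after taking the supremum over $x,y\in[-1,1]^d$.

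The main obstacle I anticipate is the uniform-in-$A$ control of the strip integral in the third step: one must identify the correct strip half-width $(1-\delta)\tfrac\pi2$ --- dictated by the simultaneous loss of decay of $e^{-e^t}$ and $e^{-A\nu e^{-t}}$ at $\Im t = \tfrac\pi2$, where the constant $c_\delta^{-\nu}$ blows up --- and justify the contour shift with constants that do not degrade as $A$ varies. Pleasantly, the $A$-dependent Gaussian factor only helps, since its modulus on every horizontal contour in the strip never exceeds one and can simply be discarded; this is exactly what makes the bound uniform in $A$ and lets the supremum over the scattered points be taken for free.
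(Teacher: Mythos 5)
Your proposal is correct and follows essentially the same route as the paper's proof: you factor out the prefactor $C(x,y)=O(\sigma_{\min}^{-d})$, reduce to the scalar trapezoid error for $h_A(t)=\tfrac{1}{\Gamma(\nu)}e^{\nu t-e^{t}-A\nu e^{-t}}$ (the paper's $f(t)$ up to the constant $\Gamma(\nu)$), split into the infinite-sum error plus tail/endpoint corrections, and control the infinite sum by analyticity in the strip of half-width $(1-\delta)\tfrac{\pi}{2}$ with the same key observation that the $A$-dependent factor has modulus at most one there and can be discarded, yielding a bound uniform in $x,y$. The only departures are at the level of sub-steps and both are valid: you re-derive the exponentially convergent trapezoid bound via Poisson summation and contour shifting where the paper simply cites \cite{trefethen_traprule_2014}, and for the right tail you use a monotone sum-to-integral comparison giving the doubly-exponentially small $\tfrac{1}{\Gamma(\nu)}\Gamma(\nu,e^{t_{\max}})$ (plus a trivial case split at $t_{\max}=\log\nu$), whereas the paper gets $O(e^{-\alpha t_{\max}})$ directly from the convexity bound $e^{t}\geq(\nu+\alpha)t+(\nu+\alpha)(1-\log(\nu+\alpha))$ followed by a geometric series.
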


\noindent The proof is given in Appendix \ref{app:lemmas}. \REV{In brief, the third term in the error bound appearing in the statement of the lemma is due to the approximation of the integral in $t$ with an infinite Riemann sum from $t=-\infty$ to $t=\infty$. The first and second terms, meanwhile, are due to the truncation of the tails of this Riemann sum outside of $[ t_{\min} , t_{\max} ]$.}

Motivated by the result of Lemma \ref{lem:dt}, we make the following definition. 
\begin{defn}
\emph{\label{def:eptrap} For any $t_{\min},t_{\max}\in\R$, $N_{t}\geq1$,
and $\nu,\alpha>0$, $\delta\in(0,1)$, define 
\[
\epsilon_{\mathrm{trap}}^{\nu,\alpha,\delta}\left(t_{\min},t_{\max,}N_{t}\right)\coloneqq e^{-\alpha t_{\max}}+e^{\nu t_{\min}}+e^{-\frac{(1-\delta)\pi^{2}}{\Delta t}}.
\]
Usually we omit the dependence on $\nu,\alpha,\delta$ (which we can
take to be fixed), as well as on $t_{\min},$ $t_{\max},$ and $N_{t}$
(our adjustable parameters) from the notation, simply writing $\epsilon_{\mathrm{trap}}$.}
\end{defn}

\subsection{Stage (2)}

To accomplish stage (2), \REV{bounding the error due to Chebyshev interpolation in $\sigma$,} first we show that Chebyshev interpolation
of the Gaussian function in the \emph{width} parameter attains uniform
accuracy that is controlled only by $\kappa=\frac{\sigma_{\max}}{\sigma_{\min}}$
and the number of interpolating widths $N_{\sigma}$.
\begin{lem}
\label{lem:interp}For all $x\in\R$, 
\[
\left|e^{-\frac{x^{2}}{2\sigma^{2}}}-\sum_{k=1}^{N_{\sigma}}P_{k}(\sigma)e^{-\frac{x^{2}}{2\sigma_{k}^{2}}}\right|\leq2(\kappa-1)\left(\frac{\kappa-1}{\kappa+1}\right)^{N_{\sigma}}.
\]
\end{lem}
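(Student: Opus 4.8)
The plan is to recognize the sum $\sum_{k}P_{k}(\sigma)\,e^{-x^{2}/(2\sigma_{k}^{2})}$ as the degree-$N_{\sigma}$ polynomial interpolant, at the Chebyshev--Lobatto nodes $\{\sigma_{k}\}$ of (\ref{eq:chebscheme}), of the single-variable function
\[
g(\sigma):=e^{-x^{2}/(2\sigma^{2})},\qquad\sigma\in[\sigma_{\min},\sigma_{\max}],
\]
with $x$ held fixed. The left-hand side of the lemma is then exactly the pointwise interpolation error $g(\sigma)-p_{N_{\sigma}}(\sigma)$, which I would bound uniformly on $[\sigma_{\min},\sigma_{\max}]$ \emph{and} uniformly in $x$ by the classical error estimate for Chebyshev interpolation of analytic functions: if $g$ continues analytically to the closed Bernstein ellipse $E_{\rho}$ with foci $\sigma_{\min},\sigma_{\max}$ and parameter $\rho>1$, and $\vert g\vert\le M$ there, then $\Vert g-p_{N_{\sigma}}\Vert_{\infty}\le\frac{4M\rho^{-N_{\sigma}}}{\rho-1}$.

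The entire argument then hinges on one good choice of ellipse. Writing $c=\tfrac{1}{2}(\sigma_{\max}+\sigma_{\min})$ and $h=\tfrac{1}{2}(\sigma_{\max}-\sigma_{\min})$ for the center and half-width of the interval, I would take precisely
\[
\rho=\frac{\kappa+1}{\kappa-1}=\frac{c}{h},
\]
the equality $c/h=\rho$ being the crucial coincidence. With this choice $\rho-1=\frac{2}{\kappa-1}$ and $\rho^{-N_{\sigma}}=\left(\frac{\kappa-1}{\kappa+1}\right)^{N_{\sigma}}$, so that $\frac{4M}{\rho-1}\rho^{-N_{\sigma}}=2M(\kappa-1)\left(\frac{\kappa-1}{\kappa+1}\right)^{N_{\sigma}}$. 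It therefore remains only to establish the uniform bound $M\le1$ on this ellipse, and this is the step I expect to be the main obstacle, since $g$ in fact blows up near its essential singularity at $\sigma=0$, so a sup bound on a complex ellipse is a priori delicate.

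To obtain $M\le1$ I would show that $E_{\rho}$ lies entirely in the cone $\{\,\vert\Im\sigma\vert\le\Re\sigma\,\}$, because there $\Re(\sigma^{-2})=\big((\Re\sigma)^{2}-(\Im\sigma)^{2}\big)/\vert\sigma\vert^{4}\ge0$, whence $\vert g(\sigma)\vert=e^{-\frac{x^{2}}{2}\Re(\sigma^{-2})}\le1$ for \emph{every} real $x$ at once. Parametrizing $E_{\rho}$ through the normalized variable $s=\tfrac{1}{2}(\rho e^{i\theta}+\rho^{-1}e^{-i\theta})$ with $\sigma=c+hs$, the cone condition $\Re\sigma\ge\vert\Im\sigma\vert$ becomes $c\ge h\,(\vert\Im s\vert-\Re s)$, whose right-hand side attains maximum $h\sqrt{A^{2}+B^{2}}$ over the ellipse, where $A=\tfrac{1}{2}(\rho+\rho^{-1})$ and $B=\tfrac{1}{2}(\rho-\rho^{-1})$ are the semi-axes. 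Since $A^{2}+B^{2}=\tfrac{1}{2}(\rho^{2}+\rho^{-2})$ and $c=h\rho$, this reduces to the elementary inequality $\rho^{2}\ge\tfrac{1}{2}(\rho^{2}+\rho^{-2})$, i.e. $\rho^{2}\ge\rho^{-2}$, valid for all $\rho\ge1$. Along the way I would also note $\Re\sigma\ge hB>0$ on $E_{\rho}$, which keeps the closed ellipse clear of the singularity at $\sigma=0$ and thereby secures the analyticity hypothesis of the interpolation estimate. Substituting $M\le1$ then produces exactly $2(\kappa-1)\left(\frac{\kappa-1}{\kappa+1}\right)^{N_{\sigma}}$, as claimed.
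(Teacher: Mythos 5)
Your proposal is correct and follows essentially the same route as the paper's proof: both invoke the Bernstein-ellipse error bound (Theorem 8.2 of Trefethen) with the choice $\rho=\frac{\kappa+1}{\kappa-1}$, and both reduce the uniform-in-$x$ bound $M\le 1$ to showing that the mapped ellipse lies in the region where $\Re(\sigma^{-2})\ge 0$. The only (immaterial) difference is that you verify this cone condition via the exact parametrization of the ellipse, obtaining the sufficient inequality $\rho\ge\sqrt{\tfrac{1}{2}(\rho^{2}+\rho^{-2})}$ with room to spare, whereas the paper uses the cruder bounding-box estimates $\vert a\vert\le\tfrac{1}{2}(\rho+\rho^{-1})$, $\vert b\vert\le\tfrac{1}{2}(\rho-\rho^{-1})$, for which the same choice of $\rho$ gives equality; your explicit remark that the ellipse avoids the singularity at $\sigma=0$ is a welcome touch of care that the paper leaves implicit.
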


\noindent The proof is given in Appendix \ref{app:lemmas} \REV{and makes use of standard error bounds for Chebyshev interpolation of analytic functions \cite{trefethen_approximation_2020}}. Motivated
by the result of Lemma \ref{lem:interp}, we make the following definition. 
\begin{defn}
\emph{\label{def:epcheb}For any $\kappa\geq1$ and integer $N_{\sigma}\geq1$,
define 
\[
\epsilon_{\mathrm{cheb}}(N_{\sigma},\kappa)=2(\kappa-1)\left(\frac{\kappa-1}{\kappa+1}\right)^{N_{\sigma}}.
\]
 As before, we will typically omit the dependence on $N_{\sigma}$
and $\kappa$ from the notation, simply writing $\epsilon_{\mathrm{cheb}}$.}
\end{defn}

We also make a few more definitions that are necessary to state the
bound that finishes stage (2).
\begin{defn}
\emph{Let $\rho_{\max}=\sigma_{\max}\chi_{\max}$ and $\rho_{\min}=\sigma_{\min}\chi_{\min}$,
where $\chi_{\max}:=\chi(t_{\max})$ and $\chi_{\min}:=\chi(t_{\min})$.
Furthermore, let $\lambda=\frac{\rho_{\max}}{\rho_{\min}}=\kappa\frac{\chi_{\max}}{\chi_{\min}}$.}
\end{defn}

\begin{defn}
\emph{\label{def:lebesgue}Let $\Lambda_{N_{\sigma}}$ denote the
Lebesgue constant \cite{trefethen_approximation_2020} for Chebyshev
interpolation on the nodes $\sigma_{k}$, i.e., 
\[
\Lambda_{N_{\sigma}}:=\sup_{\sigma\in[\sigma_{\min},\sigma_{\max}]}\ \left\{ \sum_{k=0}^{N_{\sigma}}\left|P_{k}(\sigma)\right|\right\} .
\]
It is known \cite[Theorem 15.2]{trefethen_approximation_2020} that
$\Lambda_{N_{\sigma}}=O\left(\log N_{\sigma}\right)$.}
\end{defn}

Then Lemma \ref{lem:interp} allows us to prove the following bound,
which controls the error incurred in the kernel by our Chebyshev interpolation
procedure.
\begin{lem}
\label{lem:sigma} The bound 
\[
\lrtrip{\mathcal{B}_{t}\mathcal{B}_{t}^{*}-\mathcal{B}_{t}^{(N_{\sigma})}\mathcal{B}_{t}^{(N_{\sigma})*}}=O\left(\Lambda_{N_{\sigma}}\,\rho_{\max}^{d}\,\epsilon_{\mathrm{cheb}}\right)
\]
 holds uniformly in $t\in[t_{\min},t_{\max}]$.
\end{lem}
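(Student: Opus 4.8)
The plan is to reduce the product-operator difference to the uniform pointwise interpolation estimate of Lemma~\ref{lem:interp}, paired with exact Gaussian mass computations. First I would split the difference by the standard product rule,
\[
\mathcal{B}_{t}\mathcal{B}_{t}^{*}-\mathcal{B}_{t}^{(N_{\sigma})}\mathcal{B}_{t}^{(N_{\sigma})*}=\left(\mathcal{B}_{t}-\mathcal{B}_{t}^{(N_{\sigma})}\right)\mathcal{B}_{t}^{*}+\mathcal{B}_{t}^{(N_{\sigma})}\left(\mathcal{B}_{t}^{*}-\mathcal{B}_{t}^{(N_{\sigma})*}\right),
\]
so that each summand isolates a single interpolation-error factor. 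Writing $E:=\mathcal{B}_{t}-\mathcal{B}_{t}^{(N_{\sigma})}=\mathcal{D}_{t}\left(\mathcal{C}_{t}-\mathcal{C}_{t}^{(N_{\sigma})}\right)$ and recalling that $\mathcal{D}_{t}$ in (\ref{eq:Dt}) is a real diagonal multiplier (so $\mathcal{D}_{t}^{*}=\mathcal{D}_{t}$), I would apply Lemma~\ref{lem:interp} with its spatial argument set to $\vert x-z\vert/\chi(t)$ and its width variable to $\sigma(x)$; this gives the uniform pointwise estimate $\vert(\mathcal{C}_{t}-\mathcal{C}_{t}^{(N_{\sigma})})(x,z)\vert\le\epsilon_{\mathrm{cheb}}$, valid for all $x,z$ and, crucially, for every $t$, so that uniformity in $t\in[t_{\min},t_{\max}]$ comes for free.

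Next I would bound each summand in the triple norm by viewing it as an integral kernel $\int(\,\cdot\,)(x,z)\,(\,\cdot\,)(y,z)\,dz$ and combining the uniform error bound with an $L^{1}$-in-$z$ bound on the complementary Gaussian factor. For the first summand, $E(x,z)$ is controlled pointwise by $\epsilon_{\mathrm{cheb}}$ times the prefactor $w(x)[2\pi\sigma^{2}(x)]^{-d/2}$, while the companion $\mathcal{B}_{t}^{*}$ is a genuine nonnegative Gaussian whose exact mass is $\int\mathcal{B}_{t}(y,z)\,dz=w(y)\,\chi^{d}(t)$; using $w\le1$, $\sigma(x)\ge\sigma_{\min}$, and $\chi(t)\le\chi_{\max}$, this summand is $O(\chi_{\max}^{d}\,\epsilon_{\mathrm{cheb}})$, carrying no Lebesgue factor and dominated by the second summand. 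For the second summand the error now sits in $E^{*}$, again bounded pointwise by $\epsilon_{\mathrm{cheb}}$ times a prefactor, and I would pair it with $\int\vert\mathcal{B}_{t}^{(N_{\sigma})}(x,z)\vert\,dz$. Since the Lagrange interpolant $\mathcal{C}_{t}^{(N_{\sigma})}=\sum_{k}P_{k}(\sigma(x))\,\mathcal{G}_{k,t}$ is not sign-definite, I would pass the absolute value inside, invoke $\sum_{k}\vert P_{k}(\sigma(x))\vert\le\Lambda_{N_{\sigma}}$ (Definition~\ref{def:lebesgue}), and evaluate the exact Gaussian masses $\int\mathcal{G}_{k,t}(x,z)\,dz=(2\pi)^{d/2}(\sigma_{k}\chi(t))^{d}\le(2\pi)^{d/2}\rho_{\max}^{d}$. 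This is precisely where both $\Lambda_{N_{\sigma}}$ and the scale factor $\rho_{\max}^{d}$ enter, yielding an $O(\Lambda_{N_{\sigma}}\,\rho_{\max}^{d}\,\epsilon_{\mathrm{cheb}})$ bound for this summand.

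The main obstacle is that $\epsilon_{\mathrm{cheb}}$ is a \emph{uniform} error with no spatial decay, so a naive estimate $\int\epsilon_{\mathrm{cheb}}\,dz$ over $\R^{d}$ diverges; the decay must be supplied entirely by the complementary Gaussian factor in each summand, which is exactly why the product-rule split rather than a direct kernel comparison is essential. The remaining work is bookkeeping the constant prefactors $w(x)[2\pi\sigma^{2}(x)]^{-d/2}$ coming from $\mathcal{D}_{t}$ together with the bounds $\sigma(x)\in[\sigma_{\min},\sigma_{\max}]$: these contribute only fixed $\sigma$-dependent constants, which I would absorb into the big-$O$, leaving the tracked dependence on the adjustable width $\chi_{\max}$ (through $\rho_{\max}^{d}$) and on $N_{\sigma}$ (through $\Lambda_{N_{\sigma}}$ and $\epsilon_{\mathrm{cheb}}$). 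Summing the two contributions, noting that the first is of lower order, and taking the supremum over $x,y\in[-1,1]^{d}$ and $t\in[t_{\min},t_{\max}]$ then gives $\lrtrip{\mathcal{B}_{t}\mathcal{B}_{t}^{*}-\mathcal{B}_{t}^{(N_{\sigma})}\mathcal{B}_{t}^{(N_{\sigma})*}}=O(\Lambda_{N_{\sigma}}\,\rho_{\max}^{d}\,\epsilon_{\mathrm{cheb}})$, as claimed.
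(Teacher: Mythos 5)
Your proof is correct and takes essentially the same route as the paper's: the same telescoping split into $(\mathcal{B}_{t}-\mathcal{B}_{t}^{(N_{\sigma})})\mathcal{B}_{t}^{*}$ and $\mathcal{B}_{t}^{(N_{\sigma})}(\mathcal{B}_{t}^{*}-\mathcal{B}_{t}^{(N_{\sigma})*})$, the same pointwise application of Lemma \ref{lem:interp} (with spatial argument rescaled by $\chi(t)$, giving uniformity in $t$ for free), the same $O(\rho_{\max}^{d})$ Gaussian-mass bound on the companion factor, and the same use of the Lebesgue constant $\Lambda_{N_{\sigma}}$ on the interpolant side. The only cosmetic difference is that you carry the $\mathcal{D}_{t}$ prefactors explicitly before absorbing the $\sigma_{\min}$-dependent constants into the big-$O$, whereas the paper discards them at the outset.
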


\noindent The proof is given in Appendix \ref{app:lemmas}.

\subsection{Stage (3)}

Finally, to accomplish stage (3), we want to bound the error incurred
by the `insertion' of $(\Delta\omega)^{d}\mathcal{F}^{*}\Pi^{*}\Pi\mathcal{F}$
within the product $\mathcal{B}_{t}^{(N_{\sigma})}\mathcal{B}_{t}^{(N_{\sigma})*}$.
A first step to achieving this bound is the following lemma.
\begin{lem}
\label{lem:stage3warmup}The bound 
\[
\trip(\Delta\omega)^{d}\,\mathcal{G}_{k,t}\mathcal{F}^{*}\Pi^{*}\Pi\mathcal{F}\mathcal{G}_{l,t}-\mathcal{G}_{k,t}\mathcal{G}_{l,t}\trip=O\left(\rho_{\max}^{d}\left[\lambda^{d}\,e^{-\left[2\pi\rho_{\min}M\Delta\omega\right]^{2}}+e^{-\left[\frac{1}{4\rho_{\max}\Delta\omega}\right]^{2}}\right]\right)
\]
 holds uniformly over $t\in[t_{\min},t_{\max}]$ and $k,l\in\{0,\ldots,N_{\sigma}\}$,
provided $\Delta\omega\leq\frac{1}{8}$.
\end{lem}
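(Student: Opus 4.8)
The plan is to reduce the statement to a comparison between a truncated Riemann sum and an exact integral, both representing the same kernel. First I would record that each $\mathcal{G}_{k,t}$ is a Gaussian convolution with kernel $e^{-\vert x-z\vert^{2}/(2\rho_{k}^{2})}$, $\rho_{k}:=\sigma_{k}\chi(t)$, so that conjugation by $\mathcal{F}$ turns it into multiplication by a Gaussian symbol $\hat g_{k}(\omega)\propto e^{-2\pi^{2}\rho_{k}^{2}\vert\omega\vert^{2}}$ (with an explicit $(2\pi\rho_{k}^{2})^{d/2}$ prefactor). Unwinding the definitions of $\Pi$ and $\Pi^{*}$ then shows that the kernel of $(\Delta\omega)^{d}\mathcal{G}_{k,t}\mathcal{F}^{*}\Pi^{*}\Pi\mathcal{F}\mathcal{G}_{l,t}$ evaluated at $(x,y)$ is exactly the truncated Riemann sum $(\Delta\omega)^{d}\sum_{\mathbf n\in\{-M,\dots,M\}^{d}}F(\mathbf n\,\Delta\omega)$, where $F(\omega):=\hat g_{k}(\omega)\hat g_{l}(\omega)e^{2\pi i\omega\cdot(x-y)}$, while the kernel of the exact product $\mathcal{G}_{k,t}\mathcal{G}_{l,t}$ is the full integral $\int_{\R^{d}}F(\omega)\,d\omega$. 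Throughout, the uniform bounds $\rho_{\min}\le\rho_{k},\rho_{l}\le\rho_{\max}$ hold over all $k,l$ and $t\in[t_{\min},t_{\max}]$, so it suffices to bound the difference between the truncated sum and the integral with constants depending only on $\rho_{\min},\rho_{\max}$.

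I would then split this difference as (truncated sum $-$ full lattice sum) $+$ (full lattice sum $-$ integral), i.e.\ a truncation error plus a quadrature/aliasing error. For the truncation error I would use that $\vert F\vert=\vert\hat g_{k}\hat g_{l}\vert$ is a product Gaussian of width governed by $\rho_{k}^{2}+\rho_{l}^{2}\ge 2\rho_{\min}^{2}$, so the tail over $\vert\mathbf n\vert_{\infty}>M$ factorizes: one coordinate carries the genuine tail $\sum_{\vert n\vert>M}e^{-2\pi^{2}(\rho_{k}^{2}+\rho_{l}^{2})(n\Delta\omega)^{2}}$, bounded by a Gaussian-tail integral $\lesssim\rho_{\min}^{-1}e^{-(2\pi\rho_{\min}M\Delta\omega)^{2}}$, and the remaining $d-1$ coordinates contribute full one-dimensional Gaussian lattice sums, each $O(\rho_{\min}^{-1})$. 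Multiplying by the prefactor $(2\pi)^{d}(\rho_{k}\rho_{l})^{d}\le(2\pi)^{d}\rho_{\max}^{2d}$ and collecting the $\rho_{\min}^{-d}$ from the $d$ width factors produces exactly $\rho_{\max}^{2d}\rho_{\min}^{-d}=\rho_{\max}^{d}\lambda^{d}$, which is the source of the first term in the bound.

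For the aliasing error I would invoke the Poisson summation formula: with the unitary convention, $(\Delta\omega)^{d}\sum_{\mathbf n\in\mathbb{Z}^{d}}F(\mathbf n\,\Delta\omega)=\sum_{\mathbf m\in\mathbb{Z}^{d}}\widehat F(\mathbf m/\Delta\omega)$, and since $F$ is the Gaussian symbol $\hat g_{k}\hat g_{l}$ modulated by $e^{2\pi i\omega\cdot(x-y)}$, its transform $\widehat F$ is the physical-space Gaussian $h:=g_{k}*g_{l}$ recentered at $x-y$; concretely $\widehat F(\mathbf m/\Delta\omega)=h\big((x-y)-\mathbf m/\Delta\omega\big)$, whose prefactor is $\lesssim\rho_{\max}^{d}$ and whose width in the exponent is $2(\rho_{k}^{2}+\rho_{l}^{2})\le 4\rho_{\max}^{2}$. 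The $\mathbf m=0$ term reproduces the integral $\int F=(\mathcal{G}_{k,t}\mathcal{G}_{l,t})(x,y)$, so the aliasing error is $\sum_{\mathbf m\neq 0}h\big((x-y)-\mathbf m/\Delta\omega\big)$. Here I would use that $x,y\in[-1,1]^{d}$ forces $\vert x-y\vert_{\infty}\le 2$ while $\Delta\omega\le\tfrac{1}{8}$ forces $\vert\mathbf m/\Delta\omega\vert_{\infty}\ge 1/\Delta\omega\ge 8$ for $\mathbf m\neq 0$, so the nearest alias sits at distance at least $\tfrac{1}{\Delta\omega}-2\ge\tfrac{3}{4}\,\tfrac{1}{\Delta\omega}$; the resulting exponent $\tfrac{9}{64}(\rho_{\max}\Delta\omega)^{-2}$ dominates $[1/(4\rho_{\max}\Delta\omega)]^{2}=\tfrac{4}{64}(\rho_{\max}\Delta\omega)^{-2}$, and the $\mathbf m\neq 0$ lattice sum converges geometrically to a dimensional constant, giving the second term.

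The main obstacle is the aliasing step: one must apply Poisson summation rigorously to the modulated Gaussian $F$, correctly identify $\widehat F$ as the recentered physical Gaussian, and then carry out the geometric bookkeeping that converts the ``distance to the nearest nonzero dual-lattice point, minus $\vert x-y\vert$'' into the clean constant $1/4$ in the exponent---this is where the hypotheses $\Delta\omega\le\tfrac{1}{8}$ and the bounding box $[-1,1]^{d}$ are consumed. The truncation step is comparatively routine Gaussian-tail estimation, the only subtlety being the bookkeeping of prefactors and one-dimensional lattice sums that yields the factor $\lambda^{d}$. Finally I would note that all constants are uniform in $k,l$ and $t$ because only the extreme widths $\rho_{\min},\rho_{\max}$ enter, completing the uniform bound.
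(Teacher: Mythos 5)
Your proposal is correct and follows essentially the same route as the paper's proof: reduce both kernels to a truncated Riemann sum versus an exact integral of the modulated Gaussian symbol $\hat g_{k}\hat g_{l}\,e^{2\pi i\omega\cdot(x-y)}$, bound the truncation error by Gaussian-tail/slab estimates (yielding the $\rho_{\max}^{d}\lambda^{d}$ factor), and bound the aliasing error via Poisson summation, using $x,y\in[-1,1]^{d}$ and $\Delta\omega\leq\tfrac18$ to place the nearest nonzero alias at distance $\geq\tfrac{3}{4\Delta\omega}$, whose exponent $\tfrac{9}{64}(\rho_{\max}\Delta\omega)^{-2}$ dominates the stated $\bigl[\tfrac{1}{4\rho_{\max}\Delta\omega}\bigr]^{2}$. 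The bookkeeping details (including the harmless polynomial-in-$\rho_{\max}\Delta\omega$ prefactors that both arguments absorb) match the paper's treatment.
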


\noindent The proof is based on bounding the Riemann sum approximation
of the integral of a Gaussian using Poisson summation, and it is very
similar to Theorem 2 of \cite{barnett_uniform_2023}, for example.
See also \cite{trefethen_traprule_2014}. For completeness, we prove
the result from scratch in Appendix \ref{app:lemmas}.

Motivated by the result of Lemma \ref{lem:stage3warmup}, we make
the following definition. 
\begin{defn}
\label{def:epF} \emph{In terms of our adjustable hyperparameters
and their dependent quantities, define 
\[
\epsilon_{\mathcal{F}}:=\lambda^{d}\,e^{-[2\pi\rho_{\min}M\Delta\omega]^{2}}+e^{-\left[\frac{1}{4\rho_{\max}\Delta\omega}\right]^{2}},
\]
 where as above we omit the dependence on the parameters from our
notation.}
\end{defn}

Then the desired bound for stage (3) follows: 
\begin{lem}
\label{lem:stage3} The bound 
\[
\trip(\Delta\omega)^{d}\,\mathcal{B}_{t}^{(N_{\sigma})}\mathcal{F}^{*}\Pi^{*}\Pi\mathcal{F}\mathcal{B}_{t}^{(N_{\sigma})*}-\mathcal{B}_{t}^{(N_{\sigma})}\mathcal{B}_{t}^{(N_{\sigma})*}\trip=O\left(N_{\sigma}^{2}\,\kappa^{d}\left[\frac{\chi_{\max}}{\sigma_{\min}}\right]^{d}\epsilon_{\mathcal{F}}\right)
\]
 holds uniformly in $t\in[t_{\min},t_{\max}]$.
\end{lem}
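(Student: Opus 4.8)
The plan is to reduce the statement directly to the warm-up bound of Lemma \ref{lem:stage3warmup} by expanding $\mathcal{B}_{t}^{(N_{\sigma})}$ through its decomposition (\ref{eq:BWG}). First I would record that both factors in each summand of (\ref{eq:BWG}) are self-adjoint: $\mathcal{W}_{k}$ is multiplication by the real-valued symbol $w_{k}$, and the Gaussian kernel $\mathcal{G}_{k,t}(x,z)$ (\ref{eq:Gkt}) is real and symmetric in $(x,z)$. Consequently $\mathcal{B}_{t}^{(N_{\sigma})*}=\sum_{l=0}^{N_{\sigma}}\mathcal{G}_{l,t}\mathcal{W}_{l}$, and substituting this together with (\ref{eq:BWG}) into the two operators in the statement gives
\[
\mathcal{B}_{t}^{(N_{\sigma})}\mathcal{F}^{*}\Pi^{*}\Pi\mathcal{F}\mathcal{B}_{t}^{(N_{\sigma})*}=\sum_{k,l}\mathcal{W}_{k}\,\mathcal{G}_{k,t}\mathcal{F}^{*}\Pi^{*}\Pi\mathcal{F}\mathcal{G}_{l,t}\,\mathcal{W}_{l},\qquad\mathcal{B}_{t}^{(N_{\sigma})}\mathcal{B}_{t}^{(N_{\sigma})*}=\sum_{k,l}\mathcal{W}_{k}\,\mathcal{G}_{k,t}\mathcal{G}_{l,t}\,\mathcal{W}_{l}.
\]
The crucial observation is that the inserted operator $(\Delta\omega)^{d}\mathcal{F}^{*}\Pi^{*}\Pi\mathcal{F}$ lands exactly between $\mathcal{G}_{k,t}$ and $\mathcal{G}_{l,t}$, leaving the diagonal multipliers $\mathcal{W}_{k},\mathcal{W}_{l}$ untouched on the outside. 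Subtracting, the difference becomes $\sum_{k,l}\mathcal{W}_{k}\,\mathcal{E}_{k,l}\,\mathcal{W}_{l}$, where $\mathcal{E}_{k,l}:=(\Delta\omega)^{d}\mathcal{G}_{k,t}\mathcal{F}^{*}\Pi^{*}\Pi\mathcal{F}\mathcal{G}_{l,t}-\mathcal{G}_{k,t}\mathcal{G}_{l,t}$ is precisely the quantity controlled by Lemma \ref{lem:stage3warmup}.

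Next I would pass to the triple norm. Since $\mathcal{W}_{k}$ acts by multiplication, the kernel of $\mathcal{W}_{k}\mathcal{E}_{k,l}\mathcal{W}_{l}$ is $w_{k}(x)\,\mathcal{E}_{k,l}(x,y)\,w_{l}(y)$, so by the triangle inequality over $(k,l)$ together with the uniform bound $\trip\mathcal{E}_{k,l}\trip=O(\rho_{\max}^{d}\,\epsilon_{\mathcal{F}})$ from Lemma \ref{lem:stage3warmup}, I obtain
\[
\lrtrip{\sum_{k,l}\mathcal{W}_{k}\mathcal{E}_{k,l}\mathcal{W}_{l}}\le O\!\left(\rho_{\max}^{d}\,\epsilon_{\mathcal{F}}\right)\,\sup_{x}\Big(\sum_{k}|w_{k}(x)|\Big)\,\sup_{y}\Big(\sum_{l}|w_{l}(y)|\Big),
\]
where the double sum factors into a product precisely because the per-term bound is uniform in $(k,l)$.

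It then remains to bound $\sup_{x}\sum_{k}|w_{k}(x)|$. Recalling $w_{k}(x)=w(x)\,[2\pi\sigma^{2}(x)]^{-d/2}P_{k}(\sigma(x))$ from (\ref{eq:Wk}), the assumption $|w(x)|\le1$ (\ref{eq:w_assumption}) together with $\sigma(x)\ge\sigma_{\min}$ bounds the prefactor by $(2\pi)^{-d/2}\sigma_{\min}^{-d}$, while $\sum_{k}|P_{k}(\sigma(x))|\le\Lambda_{N_{\sigma}}$ by the definition of the Lebesgue constant (Definition \ref{def:lebesgue}). Hence $\sup_{x}\sum_{k}|w_{k}(x)|=O(\sigma_{\min}^{-d}\Lambda_{N_{\sigma}})$, and the two such factors give an overall bound $O(\Lambda_{N_{\sigma}}^{2}\,\sigma_{\min}^{-2d}\,\rho_{\max}^{d}\,\epsilon_{\mathcal{F}})$. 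Since $\Lambda_{N_{\sigma}}=O(\log N_{\sigma})=O(N_{\sigma})$ and $\rho_{\max}^{d}=\sigma_{\max}^{d}\chi_{\max}^{d}$, rewriting $\rho_{\max}^{d}/\sigma_{\min}^{2d}=\kappa^{d}(\chi_{\max}/\sigma_{\min})^{d}$ recovers the stated bound. I note in passing that the $N_{\sigma}^{2}$ factor is deliberately loose: the Lebesgue-constant route in fact yields the sharper prefactor $\log^{2}N_{\sigma}$.

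The substantive analytic content—controlling the Riemann-sum/Poisson-summation error of inserting the Fourier grid between two Gaussians—is already packaged in Lemma \ref{lem:stage3warmup}, so the present lemma is essentially bookkeeping. I expect the only point requiring genuine care to be confirming that the insertion really does sit between the two Gaussian convolutions with the multipliers left outside, which hinges on the self-adjointness of $\mathcal{W}_{k}$ and $\mathcal{G}_{k,t}$; and I would make sure to carry through the hypothesis $\Delta\omega\le\tfrac{1}{8}$ inherited from Lemma \ref{lem:stage3warmup}.
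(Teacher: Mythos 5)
Your proof is correct, and its skeleton is the same as the paper's: expand both operators via (\ref{eq:BWG}) using self-adjointness of $\mathcal{W}_{k}$ and $\mathcal{G}_{k,t}$, observe that the inserted $(\Delta\omega)^{d}\mathcal{F}^{*}\Pi^{*}\Pi\mathcal{F}$ sits between the two Gaussians so that the difference is $\sum_{k,l}\mathcal{W}_{k}\mathcal{E}_{k,l}\mathcal{W}_{l}$, apply Lemma \ref{lem:stage3warmup} termwise, and then bound the outer multiplier factors. The one place you genuinely diverge is the final bookkeeping step, and your version is sharper. The paper bounds each weight separately, $\Vert w_{k}\Vert_{\infty}=O(\sigma_{\min}^{-d})$, using the claim (cited to Boyd) that each cardinal polynomial satisfies $|P_{k}|\leq1$ on $[\sigma_{\min},\sigma_{\max}]$, and then sums over $k$ to get $\left[\sum_{k}\Vert w_{k}\Vert_{\infty}\right]^{2}=O(N_{\sigma}^{2}\sigma_{\min}^{-2d})$, which is where the $N_{\sigma}^{2}$ in the statement comes from. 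You instead keep the sum $\sum_{k}|P_{k}(\sigma(x))|$ together at a fixed point $x$ and bound it by the Lebesgue constant of Definition \ref{def:lebesgue}, obtaining $\sup_{x}\sum_{k}|w_{k}(x)|=O(\Lambda_{N_{\sigma}}\sigma_{\min}^{-d})$ and hence a prefactor $\Lambda_{N_{\sigma}}^{2}=O(\log^{2}N_{\sigma})$ in place of $N_{\sigma}^{2}$. This exploits that $\sup_{x}\sum_{k}|w_{k}(x)|\leq\sum_{k}\sup_{x}|w_{k}(x)|$, i.e., taking the supremum of the sum rather than summing suprema; it yields a strictly stronger estimate (which trivially implies the stated one), and as a bonus it does not rely on the pointwise bound $|P_{k}|\leq1$ at all, only on the Lebesgue-constant machinery the paper has already set up for Lemma \ref{lem:sigma}.
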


\noindent The proof is given in Appendix \ref{app:lemmas}. 

\subsection{Synthesizing the bounds \label{sec:maintheorem}}

Now we can synthesize stages (1), (2), and (3) of the error analysis
into our main approximation theorem.
\begin{thm}
\label{thm:main} For the non-stationary isotropic kernel $\mathcal{K}$
(\ref{eq:nonstat2}) of Mat\'ern type (with parameter $\nu$) and approximation
$\tilde{\mathcal{K}}$ (\ref{eq:Ktilde}), the following bound holds:
\[
\trip\mathcal{K}-\tilde{\mathcal{K}}\trip=O\left(\sigma_{\min}^{-d}\,\epsilon_{\mathrm{trap}}+\Lambda_{N_{\sigma}}\,\rho_{\max}^{d}\,\epsilon_{\mathrm{cheb}}+N_{\sigma}^{2}\,\kappa^{d}\left[\frac{\chi_{\max}}{\sigma_{\min}}\right]^{d}\epsilon_{\mathcal{F}}\right),
\]
 provided that $\Delta t=O(1)$, $\Delta\omega\leq\frac{1}{8}$. In
the case of the squared-exponential kernel, the bound instead reads
as 
\[
\trip\mathcal{K}-\tilde{\mathcal{K}}\trip=O\left(\Lambda_{N_{\sigma}}\,\rho_{\max}^{d}\,\epsilon_{\mathrm{cheb}}+N_{\sigma}^{2}\,\kappa^{d}\left[\frac{\chi_{\max}}{\sigma_{\min}}\right]^{d}\epsilon_{\mathcal{F}}\right).
\]
\end{thm}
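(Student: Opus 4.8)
The plan is to interpolate between $\mathcal{K}$ and $\tilde{\mathcal{K}}$ by two intermediate kernels, one for each of the first two approximation stages, and to bound the resulting three differences in the $\lrtrip{\,\cdot\,}$ norm by the triangle inequality. Concretely, I would set
\[
\mathcal{K}^{(1)}:=\sum_{j=0}^{N_{t}}v_{j}\,\mathcal{B}_{t_{j}}\mathcal{B}_{t_{j}}^{*},\qquad\mathcal{K}^{(2)}:=\sum_{j=0}^{N_{t}}v_{j}\,\mathcal{B}_{t_{j}}^{(N_{\sigma})}\mathcal{B}_{t_{j}}^{(N_{\sigma})*},
\]
so that $\mathcal{K}^{(1)}$ incorporates only the trapezoidal discretization in $t$, $\mathcal{K}^{(2)}$ additionally incorporates the Chebyshev interpolation in $\sigma$, and $\tilde{\mathcal{K}}$ finally incorporates the Fourier discretization. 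Then
\[
\lrtrip{\mathcal{K}-\tilde{\mathcal{K}}}\leq\lrtrip{\mathcal{K}-\mathcal{K}^{(1)}}+\lrtrip{\mathcal{K}^{(1)}-\mathcal{K}^{(2)}}+\lrtrip{\mathcal{K}^{(2)}-\tilde{\mathcal{K}}},
\]
and I would match the three summands to the three stages. The first summand is exactly the quantity bounded in Lemma \ref{lem:dt}, supplying the $\sigma_{\min}^{-d}\epsilon_{\mathrm{trap}}$ contribution.

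For the second and third summands I would exploit linearity of the sum over $j$ together with the triangle inequality, writing for instance
\[
\lrtrip{\mathcal{K}^{(1)}-\mathcal{K}^{(2)}}\leq\sum_{j=0}^{N_{t}}v_{j}\,\lrtrip{\mathcal{B}_{t_{j}}\mathcal{B}_{t_{j}}^{*}-\mathcal{B}_{t_{j}}^{(N_{\sigma})}\mathcal{B}_{t_{j}}^{(N_{\sigma})*}}
\]
and applying Lemma \ref{lem:sigma} termwise. The third summand is handled identically: since $\tilde{v}_{j}=(\Delta\omega)^{d}v_{j}$, the factor $(\Delta\omega)^{d}$ is precisely the one appearing in Lemma \ref{lem:stage3}, so each per-$j$ difference $\mathcal{B}_{t_{j}}^{(N_{\sigma})}\mathcal{B}_{t_{j}}^{(N_{\sigma})*}-(\Delta\omega)^{d}\mathcal{B}_{t_{j}}^{(N_{\sigma})}\mathcal{F}^{*}\Pi^{*}\Pi\mathcal{F}\mathcal{B}_{t_{j}}^{(N_{\sigma})*}$ is exactly the quantity bounded there. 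In both cases the remaining task is to control the accumulation of the per-$t$ bounds against the quadrature weights $v_{j}$.

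The crux, and the step I expect to be most delicate, is bounding this accumulated weight uniformly in the adjustable parameters, so that the error does not blow up as $t_{\min}\to-\infty$ (which is forced if one wants $\epsilon_{\mathrm{trap}}$ small). The naive estimate $\sum_{j}v_{j}\approx\int v(t)\,dt$ is inadequate, since for the Mat\'ern presentation (\ref{eq:maternpres}) one has $v(t)=\chi^{-d}(t)\,u(t)$ and this integral can diverge for small $\nu$ (e.g.\ $\nu=\tfrac{1}{2}$ in $d=1$). Instead I would use the sharper, $t$-resolved form of the estimates underlying Lemmas \ref{lem:sigma} and \ref{lem:stage3} (whose statements record only the suprema $\rho_{\max}$, $\chi_{\max}$ over $t$), in which each prefactor carries $\chi(t)$ rather than $\chi_{\max}$, together with the identity $\chi(t)^{d}v(t)=u(t)$. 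Then $\sum_{j}v_{j}\,\chi(t_{j})^{d}$ is a trapezoidal Riemann sum for $\int_{\R}u(t)\,dt$, which equals the finite normalization $\vp_{\nu}(0)=1$ of the Schoenberg presentation and is therefore $O(1)$ independently of $t_{\min}$ (using $\Delta t=O(1)$). Bounding the residual occurrences of $\chi(t_{j})$ by $\chi_{\max}$, which is legitimate because $\chi$ is increasing, then recovers the stated factors $\rho_{\max}^{d}$ and $[\chi_{\max}/\sigma_{\min}]^{d}$, producing the $\Lambda_{N_{\sigma}}\rho_{\max}^{d}\epsilon_{\mathrm{cheb}}$ and $N_{\sigma}^{2}\kappa^{d}[\chi_{\max}/\sigma_{\min}]^{d}\epsilon_{\mathcal{F}}$ terms. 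Finally, the squared-exponential case drops out by setting $N_{t}=0$, $v_{0}=1$, $\chi\equiv1$: here $\mathcal{K}=\mathcal{K}^{(1)}$ exactly, so the first summand (hence the $\epsilon_{\mathrm{trap}}$ contribution) vanishes, while the remaining two summands are estimated as above with the single node $t=0$.
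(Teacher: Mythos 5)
Your proof skeleton is identical to the paper's: the same two intermediate kernels, the same three-term triangle inequality, and the same termwise application of Lemmas \ref{lem:dt}, \ref{lem:sigma}, and \ref{lem:stage3}. Where you genuinely depart from the paper is the weight-accumulation step, and here your version is actually the more careful one. The paper disposes of this step by writing $\vert v_{j}\vert\leq v(t_{j})\,\Delta t$ with $v(t)=\chi^{-d}(t)u(t)=O\bigl(e^{(\nu-\frac{d}{2})t-e^{t}}\bigr)$, asserting that $v$ is integrable, and concluding $\sum_{j}\vert v_{j}\vert=O(1)$. That assertion holds only when $\nu>d/2$: for $\nu\leq d/2$ (e.g.\ the exponential kernel $\nu=\frac{1}{2}$ in $d=1$, or $\nu\in\{\frac{1}{2},1\}$ in $d=2$) the function $v(t)$ tends to a positive constant or diverges as $t\ra-\infty$, so $\sum_{j}\vert v_{j}\vert$ grows at least like $\vert t_{\min}\vert$ --- and $t_{\min}\ra-\infty$ is exactly what is forced if one wants $\epsilon_{\mathrm{trap}}$ small. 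Your fix --- carrying the $t$-resolved prefactor $\chi(t)^{d}$ from the proofs of Lemmas \ref{lem:sigma} and \ref{lem:stage3} and pairing it with the weights via $\chi^{d}(t)v(t)=u(t)$, $\int u\,dt=\vp_{\nu}(0)=1$ --- is precisely what is needed to make the argument uniform in $t_{\min}$, so your proof covers the low-$\nu$ regime where the paper's own justification has a gap, at the cost of reopening the two lemma proofs rather than using their statements as black boxes. Two small points to tighten: (i) finiteness of $\int u\,dt$ alone does not bound the Riemann sum $\sum_{j}u(t_{j})\,\Delta t$; use that $u$ is log-concave (hence unimodal), so the sum is at most $\int u\,dt+\Delta t\,\sup u=O(1)$ for $\Delta t=O(1)$, or recycle the tail estimates from the proof of Lemma \ref{lem:dt}; (ii) after the pairing, your prefactors are $\sigma_{\max}^{d}$ and $\kappa^{d}\sigma_{\min}^{-d}$ rather than the stated $\rho_{\max}^{d}$ and $\kappa^{d}\left[\chi_{\max}/\sigma_{\min}\right]^{d}$, so recovering the stated form by inserting factors of $\chi_{\max}^{d}$ requires $\chi_{\max}\geq1$, i.e.\ $t_{\max}\geq\log\nu$ --- harmless, since your bound is sharper and any useful parameter regime satisfies this.
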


\noindent The proof is given in Appendix \ref{app:main}. Please
refer to Definitions \ref{def:eptrap}, \ref{def:epcheb}, and \ref{def:epF}
for the definitions of $\epsilon_{\mathrm{trap}}$, $\epsilon_{\mathrm{cheb}}$,
and $\epsilon_{\mathcal{F}}$. For other quantities, the glossary
of Appendix \ref{app:glossary} may provide a helpful supplement to
the above discussion. 

\REV{Given a user-selected tolerance $\epsilon$ for the pointwise error $\trip \mathcal{K} - \tilde{\mathcal{K}} \trip$,} Theorem \ref{thm:main} offers a guide for selecting the adjustable
parameters $t_{\min}$, $t_{\max}$, $N_{t}$, $N_{\sigma}$, $M$,
and $\Delta\omega$. \REV{Recall that the parameters fixed by the problem specification are $d$, $\nu$, $\sigma_{\min}$, and $\sigma_{\max}$. The reader is referred to Appendix~Appendix \ref{app:glossary} for the glossary of notation. Since our analysis is written in terms of big-$O$ notation, we cannot make explicit choices that will achieve an error of at most $\epsilon$.  Nonetheless, we can offer a practical perspective on the procedure for choosing the parameters in terms of $\epsilon$.}
The discussion
is only informal, and the reader is encouraged to refer to the rigorous
statement of Theorem \ref{thm:main}.
\REV{
\begin{enumerate}[label={(\arabic*)}]
\item First we choose $t_{\min}$, $t_{\max}$, and $N_{t}$ to control $\epsilon_{\textrm{trap}}$ so that the first term in Theorem \ref{thm:main} is bounded as $ \sigma_{\min}^{-d} \epsilon_{\textrm{trap}} = O(\epsilon)$.
Based on Definition \ref{def:eptrap}, concretely we take 
\[
-t_{\min} = \nu^{-1} \log(1/\epsilon) + \nu^{-1} d \log(1/\sigma_{\min}), 
\]
Similarly, we take
\[
t_{\max} = \alpha^{-1} \log(1/\epsilon) + \alpha^{-1} d \log(1/\sigma_{\min}) + c_\alpha,
\]
where $\alpha \geq 1$ is arbitrary and $c_\alpha$ is universal other than its dependence on $\alpha$. Then given $t_{\min}$ and $t_{\max}$, we choose 
\[
N_{t} = (t_{\max}-t_{\min}) \log(1/\epsilon)
\]
\begin{itemize}
\item Now that $t_{\min}$ and $t_{\max}$ are fixed, the quantities $\lambda$,
$\chi_{\min}$, $\chi_{\max}$, $\rho_{\min}$, and $\rho_{\max}$
are also fixed.  
\end{itemize}
\item Then we choose $N_{\sigma}$ to  to control $\epsilon_{\textrm{cheb}}$ so that the second term in Theorem \ref{thm:main} is bounded as $ \Lambda_{N_{\sigma}} \rho_{\max}^{d} \epsilon_{\textrm{cheb}} \leq \epsilon$.
Based on Definition \ref{def:epcheb}, we take 
\[
N_{\sigma} = O( \kappa \,  [ \log (1/\epsilon) + \log(\sigma_{\max}) ] ).
\]
We comment that in the large $\kappa$ and small $\epsilon$ limit, asymptotically we can take $N_{\sigma} \sim \kappa\log (1/\epsilon) $.
\item Finally, we choose $M$ and $\Delta\omega$ to control $\epsilon_{\mathcal{F}}$ so that the third term in Theorem \ref{thm:main} is bounded as $N_{\sigma}^{2}\kappa^{d}\left[\frac{\chi_{\max}}{\sigma_{\min}}\right]^{d} \epsilon_{\mathcal{F}} = O(\epsilon)$.
Based on Definition \ref{def:epF}, we take 
\[
(\Delta\omega)^{-1} = \rho_{\max}\sqrt{\log(1/\epsilon) + \ldots }
\]
where the omitted terms are $O(d \,  [\log \kappa + \log(1/\sigma_{\min}) + \log \log  (1/\epsilon) + \log_+ (1/\nu) ])$. Then in terms of $\Delta \omega$, we choose
\[
M = \frac{(\Delta\omega)^{-1}}{\rho_{\min}}\sqrt{ (1+\nu^{-1}) \log (1/\epsilon) + \ldots}
\]
where the omitted terms are $O(d \,  [\log \kappa + \log_+ (1/\nu) ])$.
\end{enumerate}
With these choices, the error $\trip \mathcal{K} - \tilde{\mathcal{K}} \trip$ will be $O(\epsilon)$. With all else held constant, we may determine the computational complexity in terms of the error tolerance $\epsilon$ and  Mat\'ern parameter $\nu$ by taking $N_t=O( (\nu^{-1} + \alpha^{-1} )  \log^2(1/\epsilon))$, $N_\sigma=O( \log( 1/ \epsilon) )$, and
\[
M = \frac{\rho_{\max}}{\rho_{\min}} \, O \left( \sqrt{1+\nu^{-1}} \, \log(1/\epsilon) \right).
\]
After observing that  $\rho_{\max} / \rho_{\min} = \kappa \, e^{ \frac{1}{2} ( t_{\max} - t_{\min} )}$ and plugging in our choices for $t_{\max}$ and $t_{\min}$, we see that we can take
\[
M = O (\epsilon^{-p})
\]
where $p$ is any exponent larger than $(2 \nu)^{-1}$. By substituting into \eqref{eq:complexity}, we recover the final complexity $O(\epsilon^{-d p})$ for any such $p$. In particular, note that the requirements on the resolution of the Fourier grid become milder in the limit $\nu \ra \infty$ of a smoother  Mat\'ern prior. In Section \ref{sec:numerics} below, we will numerically explore
the effect of the tunable parameters on the accuracy of kernel matrix-vector
multiplications.}

\section{Numerical experiments\label{sec:numerics}}

Finally we present several numerical experiments validating the accuracy
and performance of our approach. In Section \ref{sec:matvecs} we
explore the impact of our tunable parameters on the error control
of an isolated kernel matvec. In Section \ref{sec:solves} we wrap
our matvec algorithm with a CG solver to solve a GPR problem and compare
the scaling against the FLAM package \cite{ho_flam_2020}. All experiments
were performed on a laptop with a 12th-generation Intel Core i7.

Throughout, we define a tolerance $\epsilon_{\textrm{NUFFT}}=10^{-6}$ and take NUFFTs
with a relative error tolerance of $\epsilon_{\textrm{NUFFT}}/10$. For a given maximum
standard deviation $\rho_{\max}$, we will take $\Delta\omega=\min\left\{ \frac{1}{8},\frac{1}{4}\rho_{\max}^{-1}\log^{-1/2}(1/\epsilon_{\textrm{NUFFT}})\right\} $,
which guarantees in particular (cf. Definition \ref{def:epF}) that
$e^{-\left[\frac{1}{4\rho_{\max}\Delta\omega}\right]^{2}}\leq\epsilon_{\textrm{NUFFT}}$.
All relative errors for vectors are measured in the 2-norm.

In all experiments, the points $x_{i}$, $i=1,\ldots,N$, are chosen
independently from the uniform distribution on $[-1,1]^{d}$. We also
fix $w(x)=1$ and $\sigma(x)=\frac{1}{6}\left(\prod_{i=1}^{d}\cos(\pi x_{i})+2\right)$,
except where otherwise indicated.

Code for all of our experiments is available at \cite{ourcode}.

\subsection{Matvecs \label{sec:matvecs}}

In this section we focus on the accuracy and cost of the computation
of a single matvec $\mathbf{K}\alpha$. 

First we fix the Mat\'ern parameter $\nu=\frac{3}{2}$ and set $t_{\min}=\left(1+\log\epsilon_{\textrm{NUFFT}}\right)/\nu$
and $t_{\max}=\log\left(-2\log\epsilon_{\textrm{NUFFT}}\right)$. These values are
chosen to be sufficiently small and large, respectively, such that
they do not bottleneck the error in our experiments. From these choices
we obtain $\chi_{\min}=\frac{1}{\sqrt{\nu}}e^{t_{\min}/2}$ and $\chi_{\max}=\frac{1}{\sqrt{\nu}}e^{t_{\max}/2}$.
We also choose $\sigma_{\min}=\frac{1}{6}-0.01$ and $\sigma_{\max}=\frac{1}{2}+0.01$,
in turn fixing the values of $\rho_{\min}$ and $\rho_{\max}$.

\REV{Throughout all our experiments, we maintain this value of $\kappa = \sigma_{\max} / \sigma_{\min} \approx 3$.  In practice, $\kappa$ can vary widely: in a large geographical model, correlation lengths might vary from a few miles to a few hundred miles, yielding $\kappa \approx 100$ or even larger \cite{Noack_Sethian_2022}. In general, $\kappa$ will depend on the range of length scales present within the model.}

Our tunable parameters are then $N_{t}$, $N_{\sigma}$, and
$M$. We perform ablation experiments, modifying $N_{t}$, $N_{\sigma}$,
and $M$ individually with all the remaining parameters fixed. The
results of these experiments are presented for $d=1$ and $d=2$,
respectively, in Figures \ref{fig:matvec1d} and \ref{fig:matvec2d}.
We take $N=10^{4}$ throughout; reference values for the other fixed
parameters can be found in the captions.

\begin{figure}
\centering{}\includegraphics[bb=0bp 10bp 461bp 346bp,clip,scale=0.6]{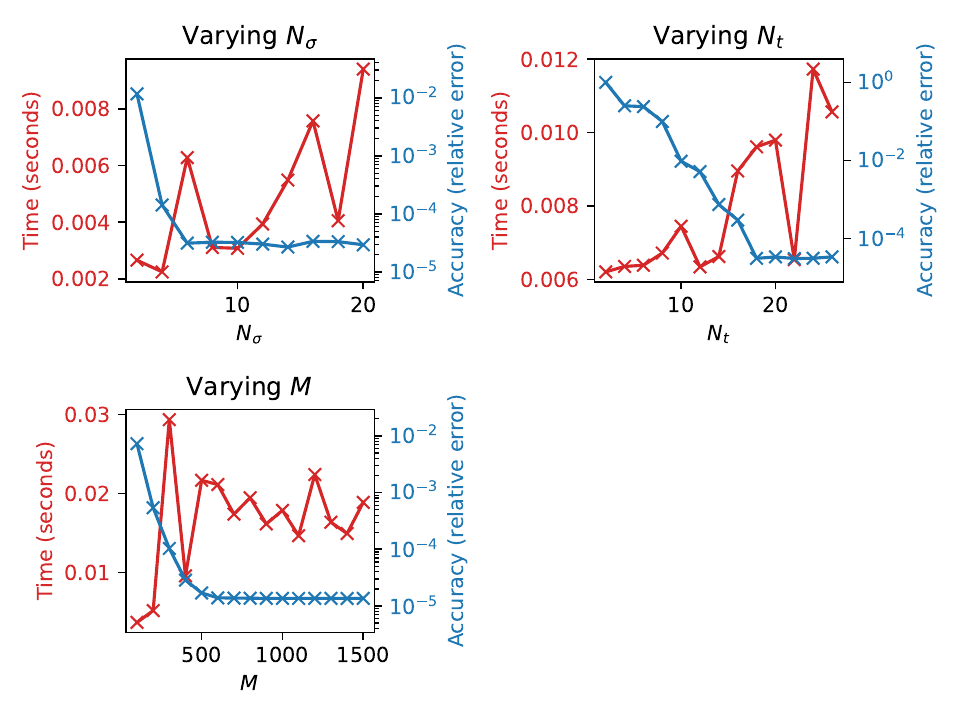}\caption{\label{fig:matvec1d}Matvec time and accuracy in one dimension for
the Mat\'ern kernel with $\nu=\frac{3}{2}$. Unless otherwise specified,
$N_{t}=20$, $N_{\sigma}=20$, and $M=400$.  \REV{Results are averaged over 50 runs.}}
\end{figure}

\begin{figure}
\centering{}\includegraphics[bb=0bp 10bp 461bp 346bp,clip,scale=0.6]{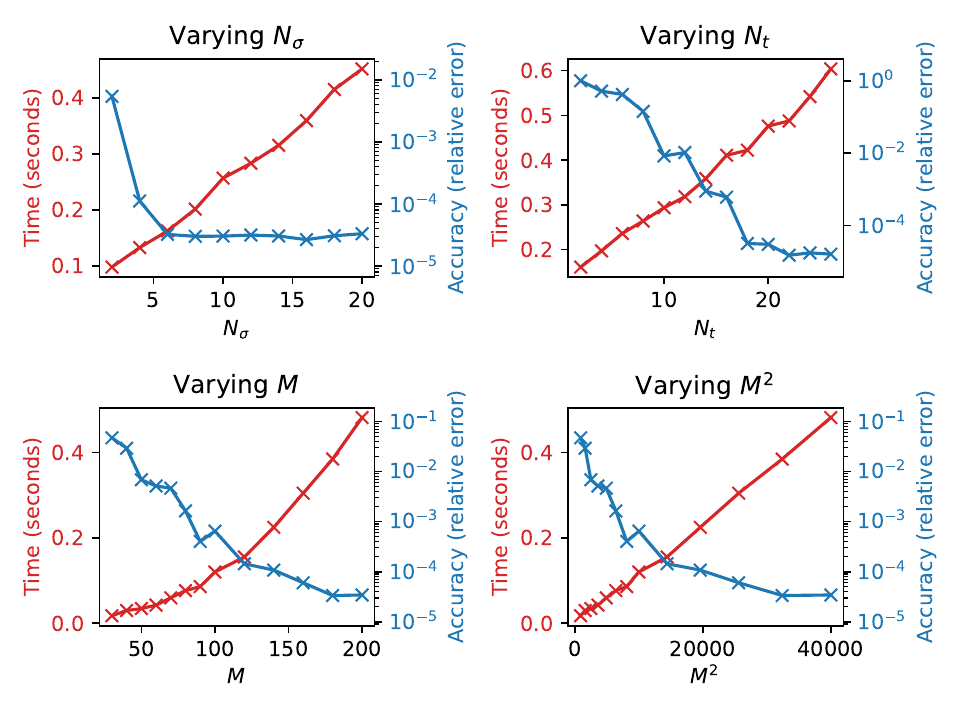}\caption{\label{fig:matvec2d}Matvec time and accuracy in two dimensions for
the Mat\'ern kernel with $\nu=\frac{3}{2}$. Unless otherwise specified,
$N_{t}=20$, $N_{\sigma}=20$, and $M=200$ ($M^{2}=40\,000$). \REV{Results are averaged over 3 runs.}}
\end{figure}

As expected, the time taken \REV{generally} grows linearly in $N_{t}$, $N_{\sigma}$,
and $M^{d}$, while errors decrease exponentially with $N_{t}$, $N_{\sigma}$,
and $M$. \REV{In the one-dimensional case, our accuracy saturates with respect to $M$ before the large-$M$ asymptotic cost scaling is revealed, so the performance appears as roughly constant in $M$ in these experiments.}  As we increase one parameter individually, the error will
eventually saturate. This is consistent with our understanding in
Theorem \ref{thm:main}, in which each of these parameters contributes
roughly independently to the error in an additive fashion. In the
one-dimensional case especially, there is some variance in the time
taken; this is because the matvec is sufficiently fast that smaller
effects such as constant setup times fail to fully amortize.

Finally, maintaining the parameters at their given reference values
for each case $d=1,2$, we vary $N$. The results are presented in
Figure \ref{fig:matvecN-matern}, showing approximately linear scaling
independent of dimension and a constant relative error. The reference
values for $N_{t}$, $N_{\sigma}$, and $M$ are the same as in Figures
\ref{fig:matvec1d} and \ref{fig:matvec2d}.

We comment that in higher dimensions, the $O(M^{d}N_{t}N_{\sigma})$
memory bottleneck of our most basic implementation becomes nontrivial.
This bottleneck owes to the formation of the tensor $\hat{G}_{k,t_{j}}[\mathbf{n},\mathbf{n}]$
(cf. Remark \ref{rem:step3}). A sequential implementation, in which
the sums over $k$ and $j$ are processed sequentially, could reduce
the memory bottleneck to $O(M^{d})$, but we do not pursue such modifications.

\begin{figure}
\centering{}\includegraphics[scale=0.5]{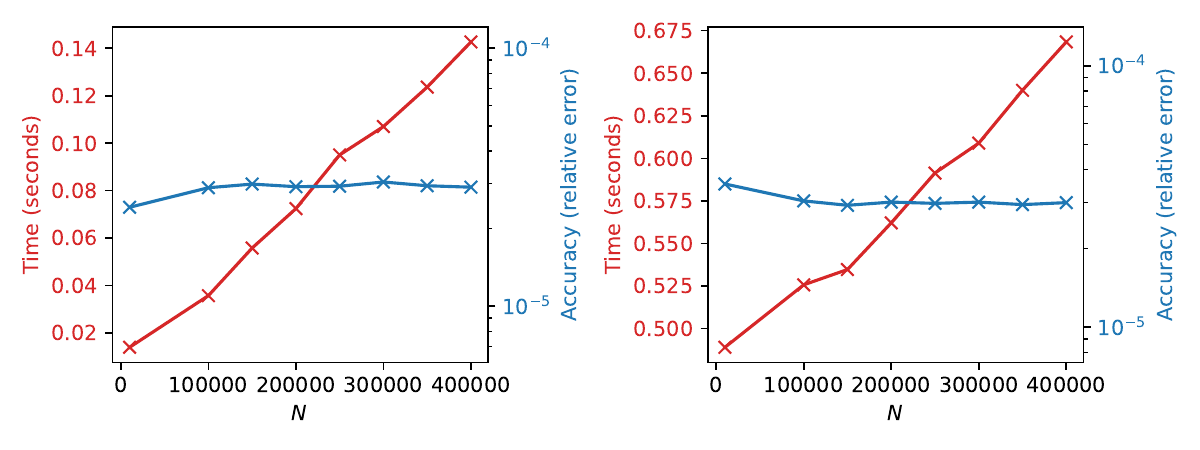}\caption{\label{fig:matvecN-matern}Matvec time and accuracy with varying $N$
in $d=1$ (left) and $d=2$ (right) for the Mat\'ern kernel with $\nu=\frac{3}{2}$.
The reference values for $N_{t}$, $N_{\sigma}$, and $M$ are the
same as in Figures \ref{fig:matvec1d} and \ref{fig:matvec2d}, respectively, \REV{as are the numbers of trial runs.}}
\end{figure}

Since the relative error is empirically independent of $N$, compatible
with Theorem \ref{thm:main}, we recommend an iterative approach to
choosing parameters given a required error: repeatedly run test matvecs
and increase one parameter, switching to another upon stagnation.
This procedure can be employed for small $N$ and then the choices
can be extrapolated to larger regression problems. 

Next we consider the same experiments in the case of the squared-exponential
kernel, for which it is unnecessary to choose $t_{\min}$ and $t_{\max}$,
and there is no need to vary $N_{t}$. In this setting, $\rho_{\min}=\sigma_{\min}$,
$\rho_{\max}=\sigma_{\max}$, and $\Delta\omega$ is determined accordingly.
The results of the same ablation experiments for $N_{\sigma}$ and
$M$ are presented in Figures \ref{fig:matvec1d-gauss}, \ref{fig:matvec2d-gauss},
and \ref{fig:matvec3d-gauss} for dimensions $d=1$, $2$, and $3$,
respectively. Once again, we take $N=10^{4}$ throughout.

\begin{figure}
\centering{}\includegraphics[bb=0bp 10bp 605bp 216bp,clip,scale=0.5]{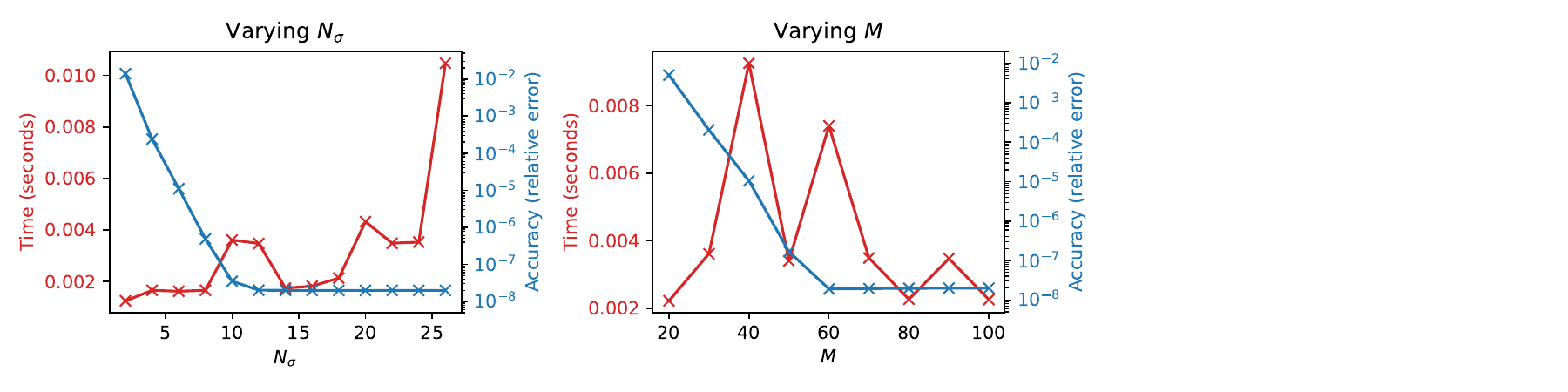}\caption{\label{fig:matvec1d-gauss}Matvec time and accuracy in one dimension
for the squared-exponential kernel. Unless otherwise specified, $N_{\sigma}=26$
and $M=100$. \REV{Results are averaged over 5 runs.}}
\end{figure}

\begin{figure}
\centering{}\includegraphics[bb=0bp 10bp 864bp 216bp,clip,scale=0.5]{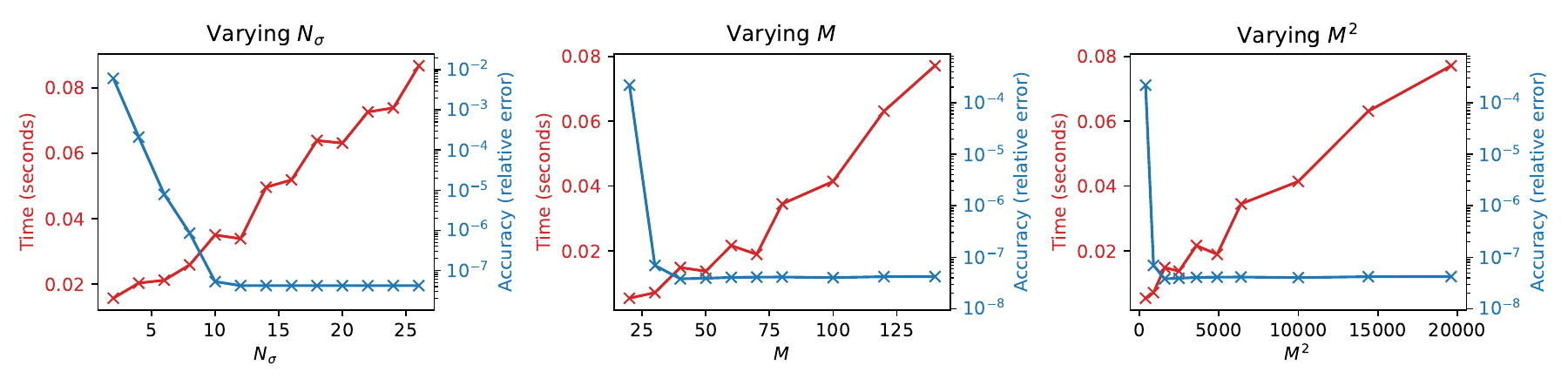}\caption{\label{fig:matvec2d-gauss}Matvec time and accuracy in two dimensions
for the squared-exponential kernel. Unless otherwise specified, $N_{\sigma}=26$
and $M=140$. \REV{Results are averaged over 3 runs.}}
\end{figure}

\begin{figure}
\begin{centering}
\includegraphics[bb=0bp 10bp 864bp 216bp,clip,scale=0.5]{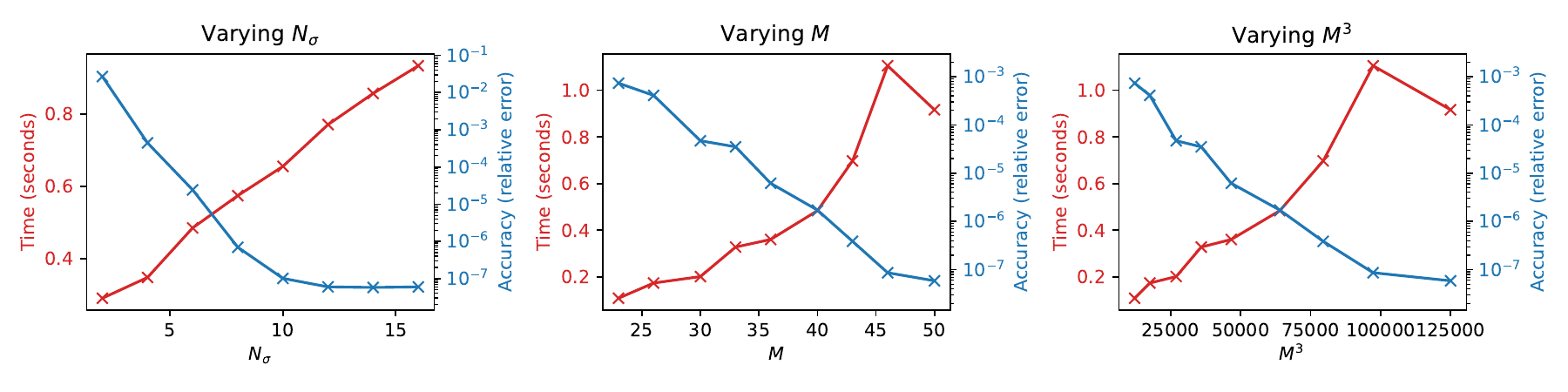}
\par\end{centering}
\caption{\label{fig:matvec3d-gauss}Matvec time and accuracy in three dimensions
for the squared-exponential kernel. Unless otherwise specified, $N_{\sigma}=16$
and $M=50$. \REV{Results are shown for a single run.}}
\end{figure}

The time scaling in Figure \ref{fig:matvec1d-gauss} is not obvious
because the error already saturates when $M$ and $N_{\sigma}$ are
both very small. Increasing the parameters to a size where the scaling
is more readily apparent would not be useful.

Once again, as expected, the error decays exponentially in $N_{\sigma}$
and $M$, and the time grows linearly in $N_{\sigma}$ and $M^{d}$.
Since the integration in $t$ no longer contributes to the error and
since $\rho_{\min}$ is larger (due to the fact that simply $\chi_{\min}=1$),
in this setting the error is smaller and the convergence with respect
to $M$ is faster, and we quickly hit the lower bound determined by
the tolerance $\epsilon$.

Finally, again keeping all parameters at their given reference values
(defined in Figures \ref{fig:matvec1d-gauss}, \ref{fig:matvec2d-gauss},
and \ref{fig:matvec3d-gauss}) and varying $N$, Figure \ref{fig:matvecN-gauss}
shows approximately linear scaling and constant relative error as
in the Mat\'ern case. For these experiments, we maintain relative accuracy
of roughly $10^{-7}$ independent of $N$.

\begin{figure}
\begin{centering}
\includegraphics[scale=0.5]{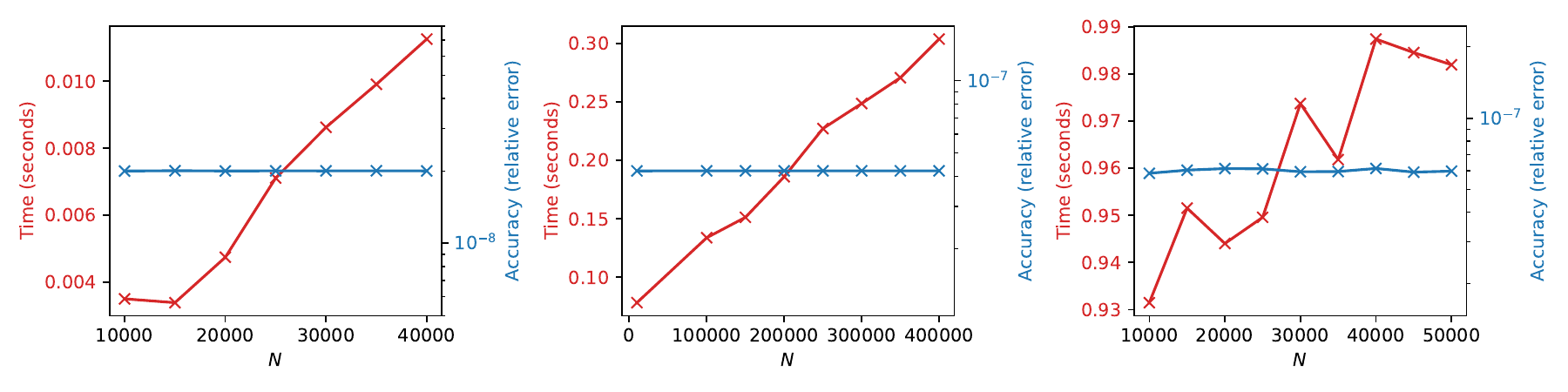}\caption{\label{fig:matvecN-gauss}Matvec time and accuracy with varying $N$
for the squared-exponential kernel, $d=1,2,3$. The reference values
for $N_{\sigma}$ and $M$ are the same as in Figures \ref{fig:matvec1d-gauss},
\ref{fig:matvec2d-gauss}, and \ref{fig:matvec3d-gauss}, \REV{as are the numbers of trial runs.}}
\par\end{centering}
\end{figure}

\subsection{Solves \label{sec:solves}}

For the purposes of GPR, we are specifically interested in wrapping
our kernel matvec subroutine in CG to compute the linear solve \REV{$(\mathbf{K}+\eta^{2}I_N ) \alpha = y$},
cf. \eqref{eq:gprsolve}. Since the solve cost is fairly insensitive
to the specification of \REV{$y$}, we simply draw the entries of \REV{$y \in\R^{N}$}
independently from the uniform distribution on $[0,1]$.

We compare our results for this inversion to those of the MATLAB package
FLAM \cite{ho_flam_2020}, which was the fastest out of all alternative
strategies tested in \cite{greengard_equispaced_2023}. In this section,
we focus on the squared-exponential kernel. We set $d=2$, since for
$d=1$ the scaling of FLAM is linear in $N$ and we believe that it
remains a state-of-the-art approach.

FLAM and our method take different approaches. FLAM first computes
a block factorization of the entire matrix $\mathbf{K}+\eta^{2}I_N$
using elementwise queries. This is expensive but allows for fast matvecs
and solves downstream. The Fourier method has insignificant offline
cost but more expensive matvecs, and the solves require an iterative
method wrapping the matvec subroutine. In order to compare the two,
we consider the sum of both the on- and offline costs, i.e., the time
necessary to go from no representation of $\mathbf{K}$ at all to
a complete solve \REV{of $(\mathbf{K}+\eta^{2}I_N) \alpha = y$}.

We consider two regimes, in both of which the condition number of
the kernel matrix should remain approximately constant as $N$ grows. \REV{Since our method uses an iterative solver, an increase in the condition number will increase its computational cost, while FLAM will remain relatively unaffected. Our method will be faster in the limit as $N \rightarrow \infty$ for any fixed condition number, but a change in conditioning could affect the crossover point.  In the ill-conditioned case, we could also use a less-accurate and therefore cheaper FLAM factorization to provide a preconditioner in our approach, though we do not pursue this technique here.}

In the first \REV{regime (the ``noisy regime'')}, we take $\eta^{2}=\Omega(N)$, specifically, $\eta^{2}=\frac{N}{2\times10^{6}}$,
while maintaining a constant kernel. In this scaling regime (the ``noisy regime''), although
we take an increasing number of observations, the noise of the observations
also grows, such that our uncertainty about the inferred function
remains constant. We fix $N_{\sigma}=15$, $M=75$, and $\epsilon=10^{-6}$,
and we choose a residual tolerance of $10^{-6}$ in CG. These choices
yield relative matvec errors that are never larger than $10^{-5}$.

In the second scaling regime \REV{(the ``narrow regime'')}, we keep $\eta^{2}=10^{-1}$ constant,
but change the kernel as $N$ increases so that the effective number
of nonzero entries in each row of $\mathbf{K}$ remains constant and
the diagonal entries of $\mathbf{K}$ remain $\Omega(1)$. Specifically,
in terms of our reference choice $\sigma_{\mathrm{ref}}(x)$ for $\sigma(x)$,
\REV{we set $\sigma(x)=(N/1000)^{-1/d}\sigma_{\mathrm{ref}}(x)$ and $w(x)=(N/1000)^{-1/2}$} to fix the kernel. We set $N_{\sigma}=15$, independent of $N$, and
$M=3\,N^{1/d}$ in order to maintain constant relative accuracy as
$N$ is increased. These choices yield relative matvec errors that
are never larger than $10^{-5}$. Again we choose a residual tolerance
of $10^{-6}$ in CG. \REV{FLAM uses a recursive skeletonization algorithm, based on multiple interpolative decompositions (IDs) of submatrices.  We set the tolerance for each ID to $10^{-10}$ and choose a {tree occupancy parameter}, which governs the size of the smallest submatrices to be factorized, of $200$. The exact choice here is not particularly important in our experiments as long as it is substantially less than the size of the matrix. Both of these choices follow \cite{greengard_equispaced_2023}.}

\begin{figure}
\begin{centering}
\includegraphics[scale=0.5]{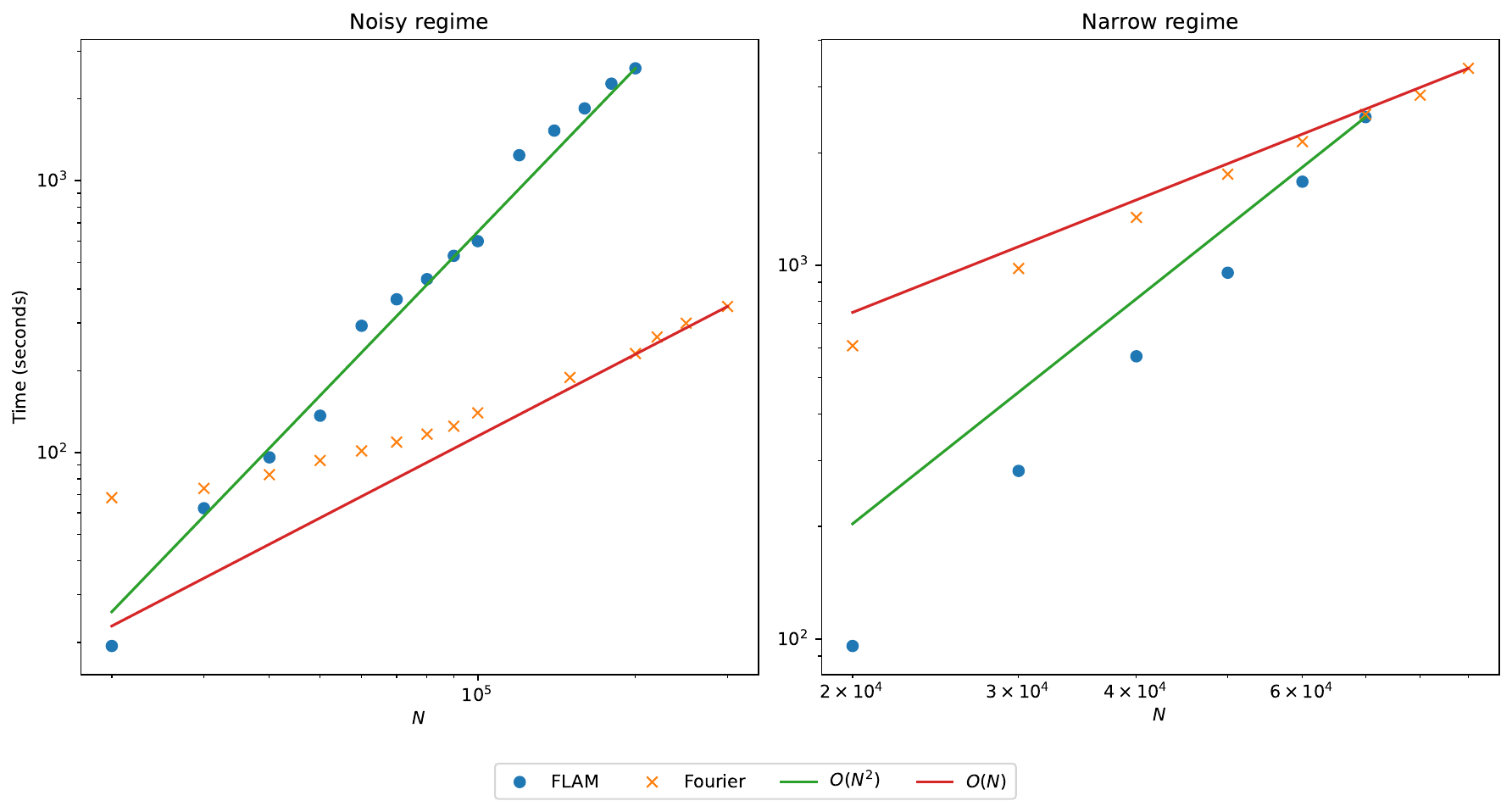}
\par\end{centering}
\centering{}\caption{\label{fig:inversion}Time to perform the linear solve $(\mathbf{K}+\eta^{2}I_N ) \alpha = y$
for the squared exponential kernel in $d=2$ in the scaling regimes discussed in Section \ref{sec:solves}.}
\end{figure}

\begin{figure}
\begin{centering}
\includegraphics[scale=0.5]{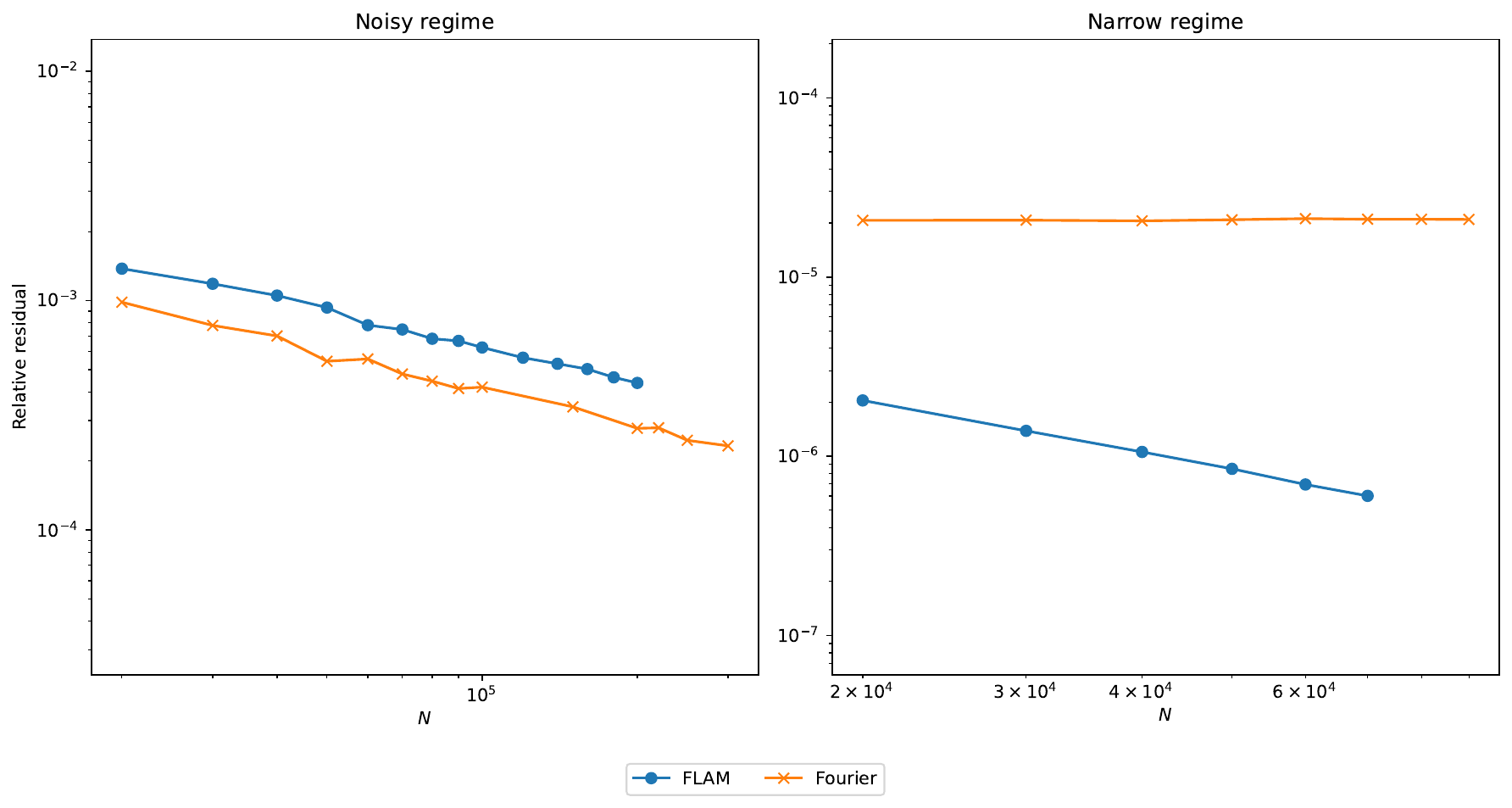}
\par\end{centering}
\centering{}\caption{\label{fig:inversion_residuals}Relative residuals from performing the solve $(\mathbf{K}+\eta^{2}I_N) \alpha = y $
for the squared exponential kernel in $d=2$ in the scaling regimes discussed in Section \ref{sec:solves}.}
\end{figure}

\REV{We report the cost comparison of the two approaches in Figure \ref{fig:inversion}, and we show the relative residuals $\Vert (\mathbf{K}+\eta^2 I_N) \alpha - y \Vert/\Vert y \Vert$ obtained by each method in Figure \ref{fig:inversion_residuals}, computed using the exact\footnote{The exact applications of $\mathbf{K}$ required to compute the residuals were carried out in $O(N^2)$ time using the KeOps library \cite{keops}.  Computing the solves this way would have been prohibitively slow, but a single application required only a few minutes for the largest values of $N$ that we considered.} kernel matrix $\mathbf{K}$. These figures confirm the $\tilde{O}(N)$ scaling of our method, which provides comparable residuals to those of FLAM, while the scaling of FLAM is at least $O(N^2)$ in both cases.  (We use a naive implementation, in which we simply provide the FLAM library with a function for element- or block-wise access to the kernel matrix; more GPR-specific implementations using annular proxies can achieve $O(N^{3/2})$ scaling, which is still worse than the demonstrated scaling of our method \cite{greengard_equispaced_2023}.)  The narrow regime uses smaller values of $N$ compared to the noisy regime; this is due to both our implementation and FLAM running out of memory on the consumer hardware we used.  For our method, this is caused by the scaling of $M^d =O(N)$ Fourier grid points in this regime, exacerbating the $O(M^{d} N_{\sigma})$ memory bottleneck pointed out earlier in Section~\ref{sec:matvecs}. The same memory restrictions kept us from experimenting with larger values of $M$ and $N_\sigma$, which we would expect to bring the residuals in the narrow regime in line with those of FLAM. This memory bottleneck is straightforward to alleviate with a slightly modified implementation, as we have already commented in Section~\ref{sec:matvecs}, in which we calculate the individual pointwise multiplications in Fourier space sequentially or in smaller batches. We defer any such alternative implementations, as well as analysis of the tradeoff between speed and memory, for future application-oriented work.}

\bibliographystyle{plain}
\bibliography{gpr_nonstationary}

\appendix

\part*{Appendices}

\section{Glossary of notation\label{app:glossary}}

The following is a glossary of commonly used notation in our analysis.
We think of $d$, $\nu$, $\sigma_{\min}$, and $\sigma_{\max}$ as
given. We think of $t_{\min},t_{\max},N_{t}$, as well as $N_{\sigma}$
and $M,\Delta\omega$ as tunable parameters controlling the error
of the approximation. The other quantities are all induced in terms
of these.
\begin{center}
\begin{tabular}{|c|c|}
\hline 
\textbf{Terms} & \textbf{Meaning}\tabularnewline
\hline 
\hline 
$d$ & Spatial dimension\tabularnewline
\hline 
$\nu$ & Mat\'ern parameter (\ref{eq:matern})\tabularnewline
\hline 
\multirow{2}{*}{$t_{\min}$, $t_{\max}$, $N_{t}$} & \multicolumn{1}{c|}{Parameters for discretization scheme (\ref{eq:dtscheme}) of integral }\tabularnewline
 & representation (cf. (\ref{eq:schoenberg2}) and (\ref{eq:maternpres}))
of the Mat\'ern kernel \tabularnewline
\hline 
\multirow{2}{*}{$\sigma_{\min}$, $\sigma_{\max}$, $N_{\sigma}$, $\kappa$} & Parameters for Chebyshev interpolation by value of \tabularnewline
 & $\sigma(x)\in[\sigma_{\min},\sigma_{\max}]$, cf. (\ref{eq:chebscheme});
$\kappa=\sigma_{\max}/\sigma_{\min}$.\tabularnewline
\hline 
$\Lambda_{N_{\sigma}}$ & Lebesgue constant for the Chebyshev interpolation scheme, cf. Definition
\ref{def:lebesgue}\tabularnewline
\hline 
$\chi_{\min}$, $\chi_{\max}$ & $\chi_{\min}=\chi(t_{\min})$ and $\chi_{\max}=\chi(t_{\max})$, where
$\chi(t)$ is defined in (\ref{eq:maternpres})\tabularnewline
\hline 
$\rho_{\min}$, $\rho_{\max}$, $\lambda$ & $\rho_{\min}=\sigma_{\min}\chi_{\min}$, $\rho_{\max}=\sigma_{\max}\chi_{\max}$,
and $\lambda=\rho_{\max}/\rho_{\min}$\tabularnewline
\hline 
$M$, $\Delta\omega$ & Parameters defining Fourier grid $\Delta\omega\times\{-M,\ldots,M\}^{d}$,
cf. Section \ref{sec:approxgrid}\tabularnewline
\hline 
\end{tabular}
\par\end{center}

\section{Integral representation of the kernel \label{app:pres}}

Here we verify (\ref{eq:pres}). Recalling the definition of $\mathcal{B}_{t}$,
we compute: 
\begin{align*}
\left(\mathcal{B}_{t}\mathcal{B}_{t}^{*}\right)(x,y) & =\int\mathcal{B}_{t}(x,z)\mathcal{B}_{t}(y,z)\,dz\\
 & =w(x)\,w(y)\left[2\pi\,\sigma(x)\sigma(y)\right]^{-d}\int e^{-\frac{\vert x-z\vert^{2}}{2\sigma^{2}(x)\chi^{2}(t)}}e^{-\frac{\vert y-z\vert^{2}}{2\sigma^{2}(y)\chi^{2}(t)}}\,dz.
\end{align*}
 We can substitute the exact integration
\[
\int e^{-\frac{\vert x-z\vert^{2}}{2\sigma(x)^{2}\chi(t)^{2}}}e^{-\frac{\vert y-z\vert^{2}}{2\sigma(y)^{2}\chi(t)^{2}}}\,dz=\left(\frac{2\pi\sigma^{2}(x)\sigma^{2}(y)\chi^{2}(t)}{\sigma^{2}(x)+\sigma^{2}(y)}\right)^{d/2}e^{-\frac{\vert x-y\vert^{2}}{2\left(\sigma^{2}(x)+\sigma^{2}(y)\right)\chi^{2}(t)}}
\]
 to obtain 
\begin{equation}
\left(\mathcal{B}_{t}\mathcal{B}_{t}^{*}\right)(x,y)\,v(t)=w(x)\,w(y)\left(2\pi\left[\sigma^{2}(x)+\sigma^{2}(y)\right]\right)^{-d/2}\,e^{-\frac{\vert x-y\vert^{2}}{2\left(\sigma^{2}(x)+\sigma^{2}(y)\right)\chi^{2}(t)}}u(t)\label{eq:BtBtstar}
\end{equation}

Then using (\ref{eq:schoenberg2}) to integrate against $u(t)\,dt$,
we obtain 
\begin{align*}
\int\left(\mathcal{B}_{t}\mathcal{B}_{t}^{*}\right)(x,y)\,v(t)\,dt & =w(x)\,w(y)\left(2\pi\left[\sigma^{2}(x)+\sigma^{2}(y)\right]\right)^{-d/2}\int e^{-\frac{\vert x-y\vert^{2}}{2\left(\sigma^{2}(x)+\sigma^{2}(y)\right)\chi^{2}(t)}}\,u(t)\,dt\\
 & =w(x)\,w(y)\left(2\pi\left[\sigma^{2}(x)+\sigma^{2}(y)\right]\right)^{-d/2}\vp\left(\frac{\vert x-y\vert}{\sqrt{\sigma^{2}(x)+\sigma^{2}(y)}}\right)\\
 & =\mathcal{K}(x,y).
\end{align*}
 This establishes the desired identity (\ref{eq:pres}).

\section{Schoenberg representation of the Mat\'ern function \label{app:matern}}

We want to show how to present such $\vp=\vp_{\nu}$ in the form of
(\ref{eq:schoenberg2}) for suitably chosen $u(t)$ and $\chi(t)$. 

It is useful to recall \cite{stein_interpolation_1999} that the Mat\'ern
function (\ref{eq:matern}) can be more conveniently characterized
in terms of a Fourier transform. Specifically, one can compute that 

\begin{equation}
\psi(r):=\frac{\sqrt{\pi}}{2^{\nu-1}\Gamma\left(\nu+\frac{1}{2}\right)}\vert r\vert^{\nu}K_{\nu}(r)=\int\frac{1}{(1+\omega^{2})^{\beta}}\,e^{i\omega r}\,d\omega,\label{eq:maternfourier}
\end{equation}
 where $\beta=\nu+\frac{1}{2}$. In terms of $\psi$, we can write
the Mat\'ern function (\ref{eq:matern}) $\vp=\vp_{\nu}$ as 
\begin{equation}
\vp(r)=\frac{\Gamma(\nu+\frac{1}{2})}{\sqrt{\pi}\,\Gamma(\nu)}\psi(\sqrt{2\nu}\,r).\label{eq:relate}
\end{equation}

Now recall the identity \cite{beylkin_approximation_2005}: 
\[
\frac{1}{(1+\omega^{2})^{\beta}}=\frac{1}{\Gamma(\beta)}\int e^{-e^{t}(1+\omega^{2})+\beta t}\,dt.
\]
 Then substitute into (\ref{eq:maternfourier}) to compute: 

\begin{align}
\psi(r) & =\int\frac{1}{(1+\omega^{2})^{p}}\,e^{i\omega r}\,d\omega\nonumber \\
 & =\frac{1}{\Gamma(\beta)}\int\int e^{-e^{t}(1+\omega^{2})+\beta t}\,e^{i\omega r}\,d\omega\,dt\nonumber \\
 & =\frac{1}{\Gamma(\beta)}\int e^{\beta t-e^{t}}\left[\int e^{-e^{t}\omega^{2}}\,e^{i\omega r}\,d\omega\right]dt\nonumber \\
 & \overset{(\star)}{=}\frac{1}{\Gamma(\beta)}\int e^{\beta t-e^{t}}\sqrt{\frac{\pi}{e^{t}}}e^{-\frac{r^{2}}{4e^{t}}}\,dt\nonumber \\
 & =\frac{\sqrt{\pi}}{\Gamma(\beta)}\int e^{(\beta-\frac{1}{2})t-e^{t}}e^{-\frac{r^{2}}{4e^{t}}}\,dt.\label{eq:psicomp}
\end{align}
 In the step indicated by $(\star)$ we have used the identity for
the inverse Fourier transform of a Gaussian.

Then plugging (\ref{eq:psicomp}) into (\ref{eq:relate}) and recalling
that $\beta=\nu+\frac{1}{2}$, we obtain
\[
\vp(r)=\frac{1}{\Gamma(\nu)}\int e^{\nu t-e^{t}}e^{-\frac{r^{2}}{2e^{t}/\nu}}\,dt,
\]
 from which it directly follows that (\ref{eq:schoenberg2}) holds
with 
\[
u(t):=\frac{1}{\Gamma(\nu)}e^{\nu t-e^{t}},\quad\chi(t):=\frac{1}{\sqrt{\nu}}e^{t/2}.
\]

\section{Background on NUFFTs \label{app:nuffts}}

In our algorithms we will make use of \emph{non-uniform fast Fourier
transforms} (\emph{NUFFTs}) \cite{BOYD1992243,DuttRokhlin} of both
type 1 and type 2, which are in fact formal adjoints of one another.

For our purposes, the \textbf{type 1 NUFFT} takes as input a vector
of values $(\alpha_{j})_{j=1}^{N}$ associated to a scattered set
of points $x_{1},\ldots,x_{N}\in\R^{d}$ and returns the object $\hat{\alpha}[\mathbf{n}]$
defined on elements $\mathbf{n}\in\mathbf{G}:=\{-M,\ldots,M\}^{d}$
of a truncated integer lattice by the formula $\hat{\alpha}[\mathbf{n}]=\sum_{j=1}^{N}\alpha_{j}e^{-2\pi i\,\mathbf{n}\cdot x_{j}}.$
By rescaling the scattered points $x_{j}\rightarrow(\Delta\omega)\,x_{j}$,
we can define the rescaled type-1 NUFFT: 
\[
\hat{\alpha}[\mathbf{n}]=\sum_{j=1}^{N}\alpha_{j}e^{-2\pi i\,(\mathbf{n}\Delta\omega)\cdot x_{j}},
\]
 which can be viewed as the evaluation of the unitary Fourier transform
$\mathcal{F}$ of the empirical distribution $\sum_{j=1}^{N}\alpha_{j}\delta_{x_{j}}$
on the Fourier grid $\left\{ \mathbf{n}\Delta\omega\,:\,\mathbf{n}\in\mathbf{G}\right\} $:
\[
\hat{\alpha}[\mathbf{n}]=\left(\mathcal{F}\left[\sum_{j=1}^{N}\alpha_{j}\delta_{x_{j}}\right]\right)(\mathbf{n}\Delta\omega).
\]

The \textbf{type 2 NUFFT} (including our rescaling convention), takes
such a grid function $f$ as input and returns a vector $\hat{f}=(\hat{f}_{j})_{j=1}^{N}$
of values associated to the scattered points, defined by: 
\[
\hat{f}_{j}=\sum_{\mathbf{n}\in\mathbf{G}}f[\mathbf{n}]\,e^{2\pi i(\mathbf{n}\Delta\omega)\cdot x_{j}}.
\]
 Importantly, although the NUFFTs of type 1 and 2 are formal adjoints
of one another, they are not inverses of one another.

Both types of NUFFT can be computed approximately (with prescribed
accuracy) using standard libraries with $O(M^{d}+N\log N)$ computational
cost \cite{barnett_parallel_2019}. All of our computations make use
of the FiNUFFT library \cite{barnett_parallel_2019,cufinufft,barnett_aliasing_2021}.
In this work, we assume for simplicity that we can compute NUFFTs
exactly, neglecting this source of approximation error. We comment
here that the scaling of the runtime of the NUFFT with respect to
the prescribed error is merely logarithmic and can be easily accounted
for. We elide this source of error simply for clarity of presentation.

\section{Proofs of technical lemmata \label{app:lemmas}}
\begin{proof}
[Proof of Lemma~\ref{lem:holder}] Without loss of generality assume
that $\trip A\trip=1$. Then letting $x\in\R^{N}$ with $\Vert x\Vert_{p}=1$,
we want to show that $\Vert Ax\Vert_{p}\leq N$. Let $a_{i}$ be the
$i$-th row of $A$, and let $q$ be such that $1/p+1/q=1$. Then
$\Vert a_{i}\Vert_{q}\leq N^{1/q}$ follows from $\trip A\trip=1$
for all $i=1,\ldots,N$. Finally, we can use H\"older's inequality to
bound 
\[
\Vert Ax\Vert_{p}^{p}=\sum_{i=1}^{N}\left|a_{i}\cdot x\right|^{p}\leq\sum_{i=1}^{N}\Vert a_{i}\Vert_{q}^{p}\Vert x\Vert_{p}^{p}\leq N^{p/q+1}=N^{p},
\]
 which completes the proof.
\end{proof}
\vspace{2mm} \noindent \begin{center} \rule{0.5\columnwidth}{0.5pt}
\end{center} \vspace{2mm}
\begin{proof}
[Proof of Lemma \ref{lem:dt}] Recalling our expression (\ref{eq:BtBtstar0})
for $\left(\mathcal{B}_{t}\mathcal{B}_{t}^{*}\right)(x,y)\,v(t)$,
our assumption (\ref{eq:w_assumption}) bounding $\vert w(x)\vert\leq1$,
and our Mat\'ern function representation (\ref{eq:maternpres}), compute:
\begin{align*}
\left|\left(\mathcal{B}_{t}\mathcal{B}_{t}^{*}\right)(x,y)\,v(t)\right| & =\ \left|w(x)\,w(y)\left(2\pi\left[\sigma^{2}(x)+\sigma^{2}(y)\right]\right)^{-d/2}\,e^{-\frac{\vert x-y\vert^{2}}{2\left(\sigma(x)^{2}+\sigma(y)^{2}\right)\chi(t)^{2}}}\,u(t)\right|\\
 & \leq\ (2\pi)^{-d/2}\sigma_{\min}^{-d}\,u(t)\\
 & =\ O\left(\sigma_{\min}^{-d}e^{\nu t-e^{t}}\right).
\end{align*}
 In summary, by Definition \ref{def:triplenorm} of the norm $\lrtrip{\cdot}$,
\begin{equation}
\lrtrip{\mathcal{B}_{t}\mathcal{B}_{t}^{*}v(t)}=O\left(\sigma_{\min}^{-d}e^{\nu t-e^{t}}\right).\label{eq:btvpointwise}
\end{equation}

Now, extending our definition $t_{j}=t_{\min}+j\,\Delta t$ to all
integers $j\in\mathbb{Z}$ (where we recall that $\Delta t=(t_{\max}-t_{\min})/N_{t}$),
we bound by the triangle inequality: 
\begin{align}
\lrtrip{\mathcal{K}-\sum_{j=0}^{N_{t}}(\mathcal{B}_{t_{j}}\mathcal{B}_{t_{j}}^{*})\,v_{j}} & \ \leq\ \lrtrip{\mathcal{K}-\sum_{j=-\infty}^{\infty}(\mathcal{B}_{t_{j}}\mathcal{B}_{t_{j}}^{*})\,v(t_{j})\,\Delta t}\nonumber \\
 & \quad\quad+\ \sum_{j=-\infty}^{0}\lrtrip{\mathcal{B}_{t_{j}}\mathcal{B}_{t_{j}}^{*}\,v(t_{j})}\,\Delta t+\sum_{j=N_{t}}^{\infty}\lrtrip{\mathcal{B}_{t_{j}}\mathcal{B}_{t_{j}}^{*}\,v(t_{j})}\,\Delta t.\nonumber \\
 & \ =\ \lrtrip{\mathcal{K}-\sum_{j=-\infty}^{\infty}(\mathcal{B}_{t_{j}}\mathcal{B}_{t_{j}}^{*})\,v(t_{j})\,\Delta t}\nonumber \\
 & \quad\quad+\ O\left(\sigma_{\min}^{-d}\sum_{j=-\infty}^{0}e^{\nu t_{j}-e^{t_{j}}}\,\Delta t+\sigma_{\min}^{-d}\sum_{j=N_{t}}^{\infty}e^{\nu t_{j}-e^{t_{j}}}\,\Delta t\right),\label{eq:Kriembound}
\end{align}
 where in the last step we have used (\ref{eq:btvpointwise}). In
the last expression, the first error term measures the error of the
infinite Riemann sum approximation of $\mathcal{K}=\int_{-\infty}^{\infty}\mathcal{B}_{t}\mathcal{B}_{t}^{*}\,v(t)\,dt$,
while the last term measures the effect of cutting off the tails of
the Riemann sum.

The impact of the tails can be bounded as follows. First compute:
\[
\sum_{j=-\infty}^{0}e^{\nu t_{j}-e^{t_{j}}}\,\Delta t\leq\Delta t\sum_{j=-\infty}^{0}e^{\nu t_{j}}=e^{\nu t_{\min}}(\Delta t)\sum_{j=0}^{\infty}e^{-(\nu\Delta t)j}=e^{\nu t_{\min}}\frac{\Delta t}{1-e^{-\nu\Delta t}},
\]
 so it follows that 
\begin{equation}
\sum_{j=-\infty}^{0}e^{\nu t_{j}-e^{t_{j}}}\,\Delta t=O(e^{\nu t_{\min}}),\label{eq:tail1}
\end{equation}
 as long as $\Delta t=O(1)$.

Next, note that by convexity, $e^{t}\geq e^{t_{0}}+(t-t_{0})e^{t_{0}}$
for any $t_{0}$. Choose $t_{0}=\log(\nu+\alpha)$ for arbitrary $\alpha>0$,
yielding 
\[
e^{t}\geq(\nu+\alpha)t+(\nu+\alpha)(1-\log(\nu+\alpha)).
\]
 Therefore 
\[
\sum_{j=N_{t}}^{\infty}e^{\nu t_{j}-e^{t_{j}}}\leq e^{-(\nu+\alpha)(1-\log(\nu+\alpha))}\sum_{j=N_{t}}^{\infty}e^{-\alpha t_{j}}\,dt=e^{-(\nu+\alpha)(1-\log(\nu+\alpha))}\frac{e^{-\alpha t_{\max}}}{1-e^{-\nu\Delta t}},
\]
 and it follows that 
\begin{equation}
\sum_{j=N_{t}}^{\infty}e^{\nu t_{j}-e^{t_{j}}}\,\Delta t\leq e^{-(\nu+\alpha)(1-\log(\nu+\alpha))}e^{-\alpha t_{\max}}\frac{\Delta t}{1-e^{-\nu\Delta t}}=O(e^{-\alpha t_{\max}}).\label{eq:tail2}
\end{equation}

Then combining (\ref{eq:tail1}) and (\ref{eq:tail2}) with (\ref{eq:Kriembound}),
we have that for any $\alpha>0$, 
\begin{equation}
\lrtrip{\mathcal{K}-\sum_{j=0}^{N_{t}}(\mathcal{B}_{t_{j}}\mathcal{B}_{t_{j}}^{*})\,v_{j}}\leq\lrtrip{\mathcal{K}-\sum_{j=-\infty}^{\infty}(\mathcal{B}_{t_{j}}\mathcal{B}_{t_{j}}^{*})\,v(t_{j})\,\Delta t}+O\left(\sigma_{\min}^{-d}\,[e^{-\alpha t_{\max}}+e^{\nu t_{\min}}]\right).\label{eq:Kriembound2}
\end{equation}
 It remains to bound the first term on the right-hand side of (\ref{eq:Kriembound2}),
i.e., the error of the infinite Riemann sum.

Specifically, we want to show that for any $\delta\in(0,1)$, 
\begin{equation}
\lrtrip{\mathcal{K}-\sum_{j=-\infty}^{\infty}(\mathcal{B}_{t_{j}}\mathcal{B}_{t_{j}}^{*})\,v(t_{j})\,\Delta t}=O\left(\sigma_{\min}^{-d}e^{-\frac{(1-\delta)\pi^{2}}{\Delta t}}\right),\label{eq:trapgoal}
\end{equation}
 which, together with (\ref{eq:Kriembound2}), will complete the proof.

For simplicity, fix $x,y$ and define 
\begin{equation}
f(t):=e^{-\frac{\vert x-y\vert^{2}}{2\left(\sigma(x)^{2}+\sigma(y)^{2}\right)\chi(t)^{2}}}\,e^{\nu t-e^{t}}.\label{eq:fdef}
\end{equation}
 Then, by our expression (\ref{eq:BtBtstar0}) for $\left(\mathcal{B}_{t}\mathcal{B}_{t}^{*}\right)(x,y)\,v(t)$,
our assumption (\ref{eq:w_assumption}) bounding $\vert w(x)\vert\leq1$,
and our Mat\'ern function representation (\ref{eq:maternpres}), it suffices
to show that the error of the infinite Riemann sum for $f(t)$ is
$O\left(e^{-\frac{(1-\delta)\pi^{2}}{\Delta t}}\right)$, independent
of $x,y$.

Standard results \cite{trefethen_traprule_2014} guarantee that if
we extend $f$ analytically to a strip 
\[
T\coloneqq\{z\in\mathbb{C}\,:\,|\Im(z)|<B\}
\]
 where $B>0$ and can find $M>0$ bounding 
\[
\int_{-\infty}^{\infty}|f(a+ib)|\,da\leq M
\]
 uniformly over $b\in(-B,B)$, then the error of the Riemann sum is
bounded by 
\[
\frac{2M}{e^{2\pi B/\Delta t}-1}.
\]
 We will take $B=(1-\delta)\frac{\pi}{2}$, so the proof is complete
once we can show that 
\begin{equation}
\int_{-\infty}^{\infty}|f(a+ib)|\,da=O(1),\label{eq:suffice}
\end{equation}
 independent of $x,y$ and $\vert b\vert\le(1-\delta)\frac{\pi}{2}$.

For $t=a+ib$ with $\vert b\vert\le(1-\delta)\frac{\pi}{2}$, let
us us bound the size of the first factor in the definition (\ref{eq:fdef})
of $f(t)$, using the fact that $\chi^{2}(t)=\nu^{-1}e^{t}$: 
\begin{equation}
\left|e^{-\frac{\vert x-y\vert^{2}}{2\nu^{-1}\left(\sigma(x)^{2}+\sigma(y)^{2}\right)e^{t}}}\right|=\left|e^{-e^{-t}}\right|^{\frac{\vert x-y\vert^{2}}{2\nu^{-1}\left(\sigma(x)^{2}+\sigma(y)^{2}\right)}}.\label{eq:complexgaussian}
\end{equation}
Now 
\[
\left|e^{-e^{-t}}\right|=e^{-\Re[e^{-t}]}=e^{-e^{-a}\cos(b)},
\]
 and since $\vert b\vert<\pi/2$, we have $e^{-a}\cos(b)>0$, and
therefore $\left|e^{-e^{-t}}\right|\leq1$. Moreover, the power $\frac{\vert x-y\vert^{2}}{2\nu^{-1}\left(\sigma(x)^{2}+\sigma(y)^{2}\right)}$
is non-negative, so the entirety of (\ref{eq:complexgaussian}) is
bounded by $1$ on $T$, i.e., 
\begin{equation}
\left|e^{-\frac{\vert x-y\vert^{2}}{2\nu^{-1}\left(\sigma(x)^{2}+\sigma(y)^{2}\right)e^{t}}}\right|\leq1\label{eq:exp1}
\end{equation}
 for $t=a+ib$ where $\vert b\vert\le(1-\delta)\frac{\pi}{2}$.

Next, let us us bound the size of the second factor in the definition
(\ref{eq:fdef}) of $f(t)$: 
\[
\left|e^{\nu t-e^{t}}\right|=e^{\Re[\nu t-e^{t}]}=e^{\nu a-e^{a}\cos(b)}.
\]
 Now $\vert b\vert<B:=(1-\delta)\frac{\pi}{2}$, so $\cos(b)\geq\cos(B)>0$,
so 
\begin{equation}
\left|e^{\nu t-e^{t}}\right|\leq e^{\nu a-\cos(B)e^{a}}.\label{eq:exp2}
\end{equation}

Therefore, combining (\ref{eq:exp1}) and (\ref{eq:exp2}) with the
definition (\ref{eq:fdef}) of $f(t)$, we obtain 

\[
\int_{-\infty}^{\infty}|f(a+ib)|\,da\leq\int_{-\infty}^{\infty}e^{\nu a-\cos(B)e^{a}}\,da.
\]
 The right-hand side is integrable since $\cos(B)>0$ and independent
of $x,y$, as well as the choice of $b\in(-B,B)$, so we have achieved
(\ref{eq:suffice}) and the proof is complete.
\end{proof}
\vspace{2mm} \noindent \begin{center} \rule{0.5\columnwidth}{0.5pt}
\end{center} \vspace{2mm}
\begin{proof}
[Proof of Lemma~\ref{lem:interp}] Our proof will leverage the well-known
error bound for Chebyshev interpolation given by Theorem 8.2 of \cite{trefethen_approximation_2020}.
To use this error bound, we make a few preliminary comments. First,
recall that the \emph{Bernstein ellipse }$E_{\rho}$ of radius $\rho>1$
is defined to be the subset 
\[
E_{\rho}=\left\{ \frac{\rho e^{i\theta}+\rho^{-1}e^{-i\theta}}{2}:\theta\in[0,2\pi)\right\} =\left\{ z\in\mathbb{C}\,:\,\frac{\Re(z)^{2}}{\left[\frac{\rho+\rho^{-1}}{2}\right]^{2}}+\frac{\Im(z)^{2}}{\left[\frac{\rho-\rho^{-1}}{2}\right]^{2}}\right\} 
\]
 of the complex plane.

Second, let $\sigma(s):=\sigma_{\min}+\frac{s+1}{2}\left(\sigma_{\max}-\sigma_{\min}\right)$
be the affine transformation that maps the standard reference interval
$[-1,1]$ for Chebyshev interpolation to our interpolation interval
$[\sigma_{\min},\sigma_{\max}]$. We can view this map $\sigma(s)$
as extending to the entire complex plane.

Now Theorem 8.2 of \cite{trefethen_approximation_2020} directly implies
that if there exists some constant $M$ such that $\left|e^{-\frac{x^{2}}{2\sigma(s)^{2}}}\right|\leq M$
for all $s \in E_{\rho}$ and all $x\in\R$, then 
\begin{equation}
\left|e^{-\frac{x^{2}}{2\sigma(t)^{2}}}-\sum_{k=1}^{N_{\sigma}}P_{k}(\sigma(t))e^{-\frac{x^{2}}{2\sigma_{k}^{2}}}\right|\leq\frac{4M}{\rho-1}\rho^{-N_{\sigma}}\label{eq:trefethen}
\end{equation}
 for all $\sigma(t) \in[\sigma_{\min},\sigma_{\max}]$ and all $x\in\R$.
Therefore, we want to find constants $\rho>1$ and $M$ such that
the bound 
\begin{equation}
\left|e^{-\frac{x^{2}}{2\sigma(s)^{2}}}\right|\leq M\label{eq:ellipsebound}
\end{equation}
 holds for all $s\in E_{\rho}$ and $x\in\R$.

To achieve this, we start by writing $s=a+bi$ for $a,b\in\R$. Then
\[
\sigma_{\min}^{-1}\sigma(s)=[1+\lambda+a\lambda]+[b\lambda]i,
\]
 where we have defined $\lambda:=\frac{1}{2}(\kappa-1)\geq0$. Further
define $\alpha:=1+\lambda+a\lambda$ and $\beta:=b\lambda$, so that
\[
\sigma(s)=\left(\alpha+\beta i\right)\sigma_{\min}.
\]

Then we can write 
\begin{align*}
\left|e^{-\frac{x^{2}}{2\sigma(s)^{2}}}\right| & =\left|\exp\left[-\left(\frac{x}{\sqrt{2}\sigma_{\min}}\right)^{2}(\alpha+\beta i)^{-2}\right]\right|\\
 & =\exp\left[-\left(\frac{x}{\sqrt{2}\sigma_{\min}}\right)^{2}\Re\left[(\alpha+\beta i)^{-2}\right]\right]\\
 & =\exp\left[\left(\frac{x}{\sqrt{2}\sigma_{\min}}\right)^{2}\frac{\beta^{2}-\alpha^{2}}{(\alpha^{2}+\beta^{2})^{2}}\right].
\end{align*}
 It will follow that 
\[
\left|e^{-\frac{x^{2}}{2\sigma(s)^{2}}}\right|\leq1
\]
 as long as we can guarantee that $\beta^{2}\leq\alpha^{2}$.

Then we claim that under the choice 
\begin{equation}
\rho=\frac{\lambda+1}{\lambda},\label{eq:rhochoice}
\end{equation}
 the desired inequality (\ref{eq:ellipsebound}) holds with $M=1$.
The preceding argument establishes that to prove this claim, it suffices
to show that $\beta^{2}\leq\alpha^{2}$ whenever $s\in E_{\rho}$.

To wit, we want to show that 
\[
(b\lambda)^{2}\leq(1+\lambda+a\lambda)^{2}
\]
 for all $a+ib\in E_{\rho}$. To see this, in turn it suffices to
show that 
\[
\vert b\vert\lambda\leq1+\lambda+a\lambda.
\]
 Now recall that any $s=a+ib\in E_{\rho}$ satisfies $\vert a\vert\leq\frac{1}{2}(\rho+\rho^{-1})$
and $\vert b\vert\leq\frac{1}{2}(\rho-\rho^{-1})$. Thus in turn it
suffices to show that 
\begin{equation}
\frac{1}{2}(\rho-\rho^{-1})\lambda\leq1+\lambda-\frac{1}{2}(\rho+\rho^{-1})\lambda.\label{eq:claimineq}
\end{equation}
 Algebraic manipulations verify that the choice (\ref{eq:rhochoice})
ensures that (\ref{eq:claimineq}) holds with equality.

Therefore (\ref{eq:ellipsebound}) holds with $M=1$ and $\rho=\frac{\lambda+1}{\lambda}$.
Since in turn $\lambda=\frac{1}{2}(\kappa-1)$, the statement of the
lemma directly follows from substitution and simplification.
\end{proof}
\vspace{2mm} \noindent \begin{center} \rule{0.5\columnwidth}{0.5pt}
\end{center} \vspace{2mm}
\begin{proof}
[Proof of Lemma \ref{lem:sigma}] 

We introduce an intermediate quantity $\mathcal{B}_{t}\mathcal{B}_{t}^{(N_{\sigma})*}$
and then expand the definitions of $\mathcal{B}_{t}$ (\ref{eq:Bt})
and $\mathcal{B}_{t}^{(N_{\sigma})}$ (\ref{eq:BtNsig}), recalling
our assumption (\ref{eq:w_assumption}) that $|w(x)|\leq1$:

\begin{align*}
 & \left|\left(\mathcal{B}_{t}\mathcal{B}_{t}^{*}-\mathcal{B}_{t}^{(N_{\sigma})}\mathcal{B}_{t}^{(N_{\sigma})*}\right)(x,y)\right|\\
 & \quad\leq\ \ \left|\left(\mathcal{B}_{t}\mathcal{B}_{t}^{*}-\mathcal{B}_{t}\mathcal{B}_{t}^{(N_{\sigma})*}\right)(x,y)\right|+\left|\left(\mathcal{B}_{t}\mathcal{B}_{t}^{(N_{\sigma})*}-\mathcal{B}_{t}^{(N_{\sigma})}\mathcal{B}_{t}^{(N_{\sigma})*}\right)(x,y)\right|\\
 & \quad\leq\ \ \left|\int_{\R^{d}}e^{-\frac{\vert x-z\vert^{2}}{2\sigma^{2}(x)\chi^{2}(t)}}e^{-\frac{\vert y-z\vert^{2}}{2\sigma^{2}(y)\chi^{2}(t)}}\,dz-\sum_{j=0}^{N_{\sigma}}P_{j}(\sigma(x))\int_{\R^{d}}e^{-\frac{\vert x-z\vert^{2}}{2\sigma_{j}^{2}\chi(t)^{2}}}e^{-\frac{\vert y-z\vert^{2}}{2\sigma^{2}(y)\chi^{2}(t)}}\,dz\right|\\
 & \quad\quad\quad+\ \ \left|\sum_{j=0}^{N_{\sigma}}P_{j}(\sigma(x))\int_{\R^{d}}e^{-\frac{\vert x-z\vert^{2}}{2\sigma_{j}^{2}\chi(t)^{2}}}e^{-\frac{\vert y-z\vert^{2}}{2\sigma^{2}(y)\chi^{2}(t)}}\,dz-\sum_{j,k=0}^{N_{\sigma}}P_{j}(\sigma(x))P_{k}(\sigma(y))\int_{\R^{d}}e^{-\frac{\vert x-z\vert^{2}}{2\sigma_{j}^{2}\chi^{2}(t)}}e^{-\frac{\vert y-z\vert^{2}}{2\sigma_{k}^{2}\chi^{2}(t)}}\,dz\right|\\
 & \quad=\ \ \left|\int_{\R^{d}}e^{-\frac{\vert y-z\vert^{2}}{2\sigma^{2}(y)\chi^{2}(t)}}\left(e^{-\frac{\vert x-z\vert^{2}}{2\sigma^{2}(x)\chi^{2}(t)}}-\sum_{j=0}^{N_{\sigma}}P_{j}(\sigma(x))e^{-\frac{\vert x-z\vert^{2}}{2\sigma_{j}^{2}\chi(t)^{2}}}\right)\,dz\right|\\
 & \quad\quad\quad+\ \ \left|\sum_{j=0}^{N_{\sigma}}P_{j}(\sigma(x))\int_{\R^{d}}e^{-\frac{\vert x-z\vert^{2}}{2\sigma_{j}^{2}\chi^{2}(t)}}\left(e^{-\frac{\vert y-z\vert^{2}}{2\sigma^{2}(y)\chi^{2}(t)}}-\sum_{k=0}^{N_{\sigma}}P_{k}(\sigma(y))e^{-\frac{\vert y-z\vert^{2}}{2\sigma_{k}^{2}\chi^{2}(t)}}\right)\,dz\right|\\
 & \quad\leq\ \ \int_{\R^{d}}e^{-\frac{\vert y-z\vert^{2}}{2\sigma^{2}(y)\chi^{2}(t)}}\left|e^{-\frac{\vert x-z\vert^{2}}{2\sigma^{2}(x)\chi^{2}(t)}}-\sum_{j=0}^{N_{\sigma}}P_{j}(\sigma(x))e^{-\frac{\vert x-z\vert^{2}}{2\sigma_{j}^{2}\chi(t)^{2}}}\right|\,dz\\
 & \quad\quad\quad+\ \ \sum_{j=0}^{N_{\sigma}}\left|P_{j}(\sigma(x))\right|\int_{\R^{d}}e^{-\frac{\vert x-z\vert^{2}}{2\sigma_{j}^{2}\chi^{2}(t)}}\left|e^{-\frac{\vert y-z\vert^{2}}{2\sigma^{2}(y)\chi^{2}(t)}}-\sum_{k=0}^{N_{\sigma}}P_{k}(\sigma(y))e^{-\frac{\vert y-z\vert^{2}}{2\sigma_{k}^{2}\chi^{2}(t)}}\right|\,dz
\end{align*}
 Now we apply Lemma \ref{lem:interp} twice to bound each of the differences
between absolute value bars in the last expression, yielding: 
\begin{equation}
\left|\left(\mathcal{B}_{t}\mathcal{B}_{t}^{*}-\mathcal{B}_{t}^{(N_{\sigma})}\mathcal{B}_{t}^{(N_{\sigma})*}\right)(x,y)\right|\leq\epsilon_{\mathrm{cheb}}\int_{\R^{d}}e^{-\frac{\vert y-z\vert^{2}}{2\sigma^{2}(y)\chi^{2}(t)}}\,dz+\epsilon_{\mathrm{cheb}}\sum_{j=0}^{N_{\sigma}}\left|P_{j}(\sigma(x))\right|\int_{\R^{d}}e^{-\frac{\vert x-z\vert^{2}}{2\sigma_{j}^{2}\chi^{2}(t)}}\,dz.\label{eq:applyinterp}
\end{equation}
 Then for $\sigma\leq\sigma_{\max}$ and $\chi\leq\chi_{\max}$, we
have 
\[
\int_{\R^{d}}e^{-\frac{\vert y-z\vert^{2}}{2\sigma^{2}\chi^{2}}}\,dz=O(\rho_{\max}^{d}).
\]
 Therefore from (\ref{eq:applyinterp}) and Definition \ref{def:lebesgue}
(of the Lebesgue constant), it follows that 
\[
\left|\left(\mathcal{B}_{t}\mathcal{B}_{t}^{*}-\mathcal{B}_{t}^{(N_{\sigma})}\mathcal{B}_{t}^{(N_{\sigma})*}\right)(x,y)\right|=O\left(\epsilon_{\mathrm{cheb}}(1+\Lambda_{N_{\sigma}})\rho_{\max}^{d}\right),
\]
 independently of $x,y$, from which the desired statement follows.
\end{proof}
\vspace{2mm} \noindent \begin{center} \rule{0.5\columnwidth}{0.5pt}
\end{center} \vspace{2mm}
\begin{proof}
[Proof of Lemma \ref{lem:stage3warmup}] For all $k=0,\ldots,N_{\sigma}$,
define 
\[
g_{k,t,x}(z)=e^{-\frac{\vert x-z\vert^{2}}{2\sigma_{k}^{2}\chi^{2}(t)}},
\]
 so that $\mathcal{G}_{k,t}$ (defined as in (\ref{eq:Gkt})) satisfies
\[
\mathcal{G}_{k,t}(x,z)=g_{k,t,x}(z).
\]
 Moreover, let $\hat{g}_{k,t,x}:=\mathcal{F}g_{k,t,x}$ denote the
Fourier transform of this function.

Define the index set $\mathbf{G}:=\{-M,\ldots,M\}^{d}$ for our Fourier
grid, and let $\omega_{\mathbf{n}}:=\mathbf{n}\Delta\omega$ denote
the Fourier grid points themselves for $\mathbf{n}\in\mathbf{G}$.

Then observe that 
\begin{align*}
(\Delta\omega)^{d}\left[\mathcal{G}_{k,t}\mathcal{F}^{*}\Pi^{*}\Pi\mathcal{F}\mathcal{G}_{l,t}\right](x,y) & \ =\ (\Delta\omega)^{d}\left[\left(\Pi\mathcal{F}\mathcal{G}_{k,t}\right)^{*}\left(\Pi\mathcal{F}\mathcal{G}_{l,t}\right)\right](x,y)\\
 & \ =\ (\Delta\omega)^{d}\sum_{\mathbf{n}\in\mathbf{G}}\overline{\hat{g}_{k,t,x}(\omega_{\mathbf{n}})}\,\hat{g}_{l,t,y}(\omega_{\mathbf{n}}),
\end{align*}
and meanwhile 
\[
\left[\mathcal{G}_{k,t}\mathcal{G}_{l,t}\right](x,y)=\left[\mathcal{G}_{k,t}\mathcal{F}^{*}\mathcal{F}\mathcal{G}_{l,t}\right](x,y)=\int\overline{\hat{g}_{k,t,x}(\omega)}\,\hat{g}_{l,t,y}(\omega)\,d\omega.
\]
 Therefore to prove the lemma, we simply need to bound the error of
the trapezoidal rule on our Fourier grid defined by $(M,\Delta\omega)$
for the integration of the function 
\[
\hat{f}_{k,l,x,y,t}(\omega):=\overline{\hat{g}_{k,t,x}(\omega)}\,\hat{g}_{l,t,y}(\omega),
\]
 independently of $t\in[t_{\min},t_{\max}]$, $k,l\in\{0,\ldots,N_{\sigma}\}$,
and $x,y\in[-1,1]^{d}$. We will write $\hat{f}=\hat{f}_{k,l,x,y,t}$
for notational simplicity. Concretely then we want to bound the integral
error 
\[
I:=\left|\int_{\R^{d}}\hat{f}(\omega)\,d\omega-(\Delta\omega)^{d}\sum_{\mathbf{n}\in\mathbf{G}}\hat{f}(\mathbf{n}\,\Delta\omega)\right|.
\]

We can compute directly the Fourier transform of the Gaussian
\[
\hat{g}_{k,t,x}(\omega)=\left(\sqrt{2\pi}\sigma_{k}\chi(t)\right)^{d}e^{-2\pi i\omega\cdot x}e^{-\left[2\pi\sigma_{k}\chi(t)\right]^{2}\,\frac{\vert\omega\vert^{2}}{2}}
\]
and hence also the analytical formula
\begin{equation}
\hat{f}(\omega)=\left(2\pi\sigma_{k}\sigma_{l}\chi^{2}(t)\right)^{d}e^{2\pi i\omega\cdot(x-y)}e^{-\frac{\left[2\pi\sigma_{k}\chi(t)\right]^{2}+\left[2\pi\sigma_{l}\chi(t)\right]^{2}}{2}\,\vert\omega\vert^{2}.}\label{eq:fhat}
\end{equation}

Then bound 
\begin{equation}
I\leq\underbrace{\left|\int_{\R^{d}}\hat{f}(\omega)\,d\omega-(\Delta\omega)^{d}\sum_{\mathbf{n}\in\mathbb{Z}^{d}}\hat{f}(\mathbf{n}\,\Delta\omega)\right|}_{=:\,I_{1}}+\underbrace{(\Delta\omega)^{d}\sum_{\mathbf{n}\in\mathbb{Z}^{d}\backslash\mathbf{G}}\vert\hat{f}(\mathbf{n}\,\Delta\omega)\vert}_{=:\,I_{2}}.\label{eq:Ibound}
\end{equation}
The first term on the right-hand side of (\ref{eq:Ibound}) measures
the error of the infinite Riemann sum approximation of $\int_{\R^{d}}\hat{f}(\omega)\,d\omega$.
The second term measures the effect of cutting off the tails of $\hat{f}$.

To bound the second term, we deduce from (\ref{eq:fhat}) that 
\begin{equation}
\vert\hat{f}(\omega)\vert=O\left(\rho_{\max}^{2d}e^{-[2\pi\rho_{\min}\vert\omega\vert]^{2}}\right),\label{eq:absfhat}
\end{equation}
 from which it follows\footnote{We can bound $I_{2}\leq C\rho_{\max}^{2d}(\Delta\omega)^{d}\sum_{\mathbf{n}\in\mathbb{Z}^{d}\backslash\mathbf{G}}e^{-[2\pi\rho_{\min}\Delta\omega]^{2}\vert\mathbf{n}\vert^{2}}$
using (\ref{eq:absfhat}), for a suitable constant $C>0$. Then we
can bound the sum over $\mathbb{Z}^{d}\backslash\mathbf{G}$ with
the sum of the sums over each of the `slab complements' $\{\mathbf{n}\,:\,\vert n_{i}\vert>M\}$
for $i=1,\ldots,d$, which are all equal, yielding: 
\begin{align*}
I_{2} & \ \leq\ C\,d\,\rho_{\max}^{2d}(\Delta\omega)^{d}\sum_{\vert n_{1}\vert>M}\sum_{n_{2},\dots,n_{d}=-\infty}^{\infty}e^{-[2\pi\rho_{\min}\Delta\omega]^{2}\vert\mathbf{n}\vert^{2}}\\
 & \ =\ C\,d\,\rho_{\max}^{2d}\left[\Delta\omega\sum_{n=-\infty}^{\infty}e^{-[2\pi\rho_{\min}\Delta\omega]^{2}n^{2}}\right]^{d-1}\left[\Delta\omega\sum_{\vert n\vert>M}e^{-[2\pi\rho_{\min}\Delta\omega]^{2}n^{2}}\right]\\
 & \ =\ O\left(\rho_{\max}^{2d}\left[\Delta\omega+\Delta\omega\sum_{n=1}^{\infty}e^{-[2\pi\rho_{\min}\Delta\omega]^{2}n^{2}}\right]^{d-1}\left[\Delta\omega\sum_{n=M+1}^{\infty}e^{-[2\pi\rho_{\min}\Delta\omega]^{2}n^{2}}\right]\right)\\
 & \ =\ O\left(\rho_{\max}^{2d}\,\left[\Delta\omega+\int_{0}^{\infty}e^{-[2\pi\rho_{\min}u]^{2}}\,du\right]^{d-1}\left[\int_{M\Delta\omega}^{\infty}e^{-[2\pi\rho_{\min}u]^{2}}\,du\right]\right)\\
 & \ =\ O\left(\rho_{\max}^{2d}\rho_{\min}^{-d}e^{-[2\pi\rho_{\min}M\Delta\omega]^{2}}\right),
\end{align*}
 assuming $\Delta\omega=O(1)$ and using the fact \cite{chang_chernoff_2011}
that $\int_{v}^{\infty}e^{-x^{2}}\,dx=O(e^{-v^{2}})$.} that the tail term can be bounded as 
\begin{equation}
I_{2}=O\left(\rho_{\max}^{d}\lambda^{d}\,e^{-[2\pi\rho_{\min}M\Delta\omega]^{2}}\right),\label{eq:gausstail}
\end{equation}
 assuming that $\Delta\omega=O(1)$.

Now to bound the first term on the right-hand side of (\ref{eq:Ibound}),
we use the Poisson summation formula, which implies that 
\begin{equation}
(\Delta\omega)^{d}\sum_{\mathbf{n}\in\mathbb{Z}^{d}}\hat{f}(\mathbf{n}\,\Delta\omega)=\sum_{\mathbf{n}\in\mathbb{Z}^{d}}f(\mathbf{n}/\Delta\omega),\label{eq:PSF}
\end{equation}
 where $f:=\mathcal{F}^{*}\hat{f}$ is the inverse Fourier transform
of $\hat{f}$, which can be calculated explicitly as 
\begin{equation}
f(z)=\left(\frac{2\pi\sigma_{k}^{2}\sigma_{l}^{2}}{\sigma_{k}^{2}+\sigma_{l}^{2}}\chi^{2}(t)\right)^{d/2}e^{-\frac{\vert z-(y-x)\vert^{2}}{2\left(\left[\sigma_{k}\chi(t)\right]^{2}+\left[\sigma_{l}\chi(t)\right]^{2}\right)}}.\label{eq:fanalytic}
\end{equation}
 Note moreover that $\int_{\R^{d}}\hat{f}(\omega)\,d\omega=f(0)$,
so from (\ref{eq:PSF}) it follows that 
\begin{equation}
I_{1}=\left|\sum_{\mathbf{n}\in\mathbb{Z}^{d}\backslash\{0\}}f(\mathbf{n}/\Delta\omega)\right|\leq\sum_{\mathbf{n}\in\mathbb{Z}^{d}\backslash\{0\}}\left|f(\mathbf{n}/\Delta\omega)\right|.\label{eq:I1bound}
\end{equation}
 Now from the expression (\ref{eq:fanalytic}) for $f$ we deduce
that 
\begin{equation}
\vert f(z)\vert=O\left(\rho_{\max}^{d}\,e^{-\frac{\vert z-(y-x)\vert^{2}}{4\rho_{\max}^{2}}}\right),\label{eq:absfbound}
\end{equation}
 using the fact\footnote{This fact in turn follows from the identity $\frac{\sigma_{k}^{2}\sigma_{l}^{2}}{\sigma_{k}^{2}+\sigma_{l}^{2}}=\frac{\sigma_{k}^{2}}{\sigma_{k}^{2}/\sigma_{l}^{2}+1}\leq\frac{\sigma_{k}^{2}}{2}$,
which holds when $\sigma_{k}\geq\sigma_{l}$, together with similar
reasoning in the opposite case.} that $\frac{\sigma_{k}^{2}\sigma_{l}^{2}}{\sigma_{k}^{2}+\sigma_{l}^{2}}\leq\frac{\sigma_{\max}^{2}}{2}$. 

Then recall that we assume $x,y\in[-1,1]^{d}$, so in turn it must
be the case that $y-x\in[-2,2]^{d}$. Therefore from (\ref{eq:absfbound})
and the reverse triangle inequality we have that 
\begin{equation}
\vert f(z)\vert=O\left(\rho_{\max}^{d}e^{-\frac{\sum_{i=1}^{d}[\vert z_{i}\vert-2]_{+}^{2}}{4\rho_{\max}^{2}}}\right),\label{eq:absfbound2}
\end{equation}
 from which it follows\footnote{From (\ref{eq:I1bound}) and (\ref{eq:absfbound2}) we can bound $I_{1}\leq C\rho_{\max}^{d}\sum_{\mathbf{n}\in\mathbb{Z}^{d}\backslash\{0\}}e^{-\sum_{i=1}^{d}\left[\frac{n_{i}-2\Delta\omega}{2\rho_{\max}\Delta\omega}\right]_{+}^{2}}$,
for a suitable constant $C>0$. Then again we can bound the sum over
$\mathbb{Z}^{d}\backslash\{0\}$ with the sum of the sums over each
of the `slab complements' $\{\mathbf{n}\,:\,n_{i}\neq0\}$ for $i=1,\ldots,d$,
which are all equal, yielding: 
\begin{align*}
I_{1} & \ \leq\ C\,d\,\rho_{\max}^{d}\left[\sum_{n\in\mathbb{Z}}e^{-\left[\frac{n-2\Delta\omega}{2\rho_{\max}\Delta\omega}\right]_{+}^{2}}\right]^{d-1}\left[\sum_{n\in\mathbb{Z}\backslash\{0\}}e^{-\left[\frac{n-2\Delta\omega}{2\rho_{\max}\Delta\omega}\right]_{+}^{2}}\right]\\
 & \ =\ O\left(\rho_{\max}^{d}\left[1+\sum_{n=1}^{\infty}e^{-\left[\frac{n-2\Delta\omega}{2\rho_{\max}\Delta\omega}\right]_{+}^{2}}\right]^{d-1}\left[\sum_{n=1}^{\infty}e^{-\left[\frac{n-2\Delta\omega}{2\rho_{\max}\Delta\omega}\right]_{+}^{2}}\right]\right).
\end{align*}
 Now if $\Delta\omega\leq\frac{1}{8}$, then 
\begin{align*}
\sum_{n=1}^{\infty}e^{-\left[\frac{n-2\Delta\omega}{2\rho_{\max}\Delta\omega}\right]_{+}^{2}} & \leq\sum_{n=1}^{\infty}e^{-\left[\frac{n-1/4}{2\rho_{\max}\Delta\omega}\right]^{2}}\\
 & \leq\sum_{n=4}^{\infty}e^{-\left[\frac{n/4-1/4}{2\rho_{\max}\Delta\omega}\right]^{2}}\leq4\int_{3/4}^{\infty}e^{-\left[\frac{u-1/4}{2\rho_{\max}\Delta\omega}\right]^{2}}\,du=4\int_{1/2}^{\infty}e^{-\left[\frac{u}{2\rho_{\max}\Delta\omega}\right]^{2}}\,du=O\left(e^{-\left[\frac{1}{4\rho_{\max}\Delta\omega}\right]^{2}}\right),
\end{align*}
 again using the fact that $\int_{v}^{\infty}e^{-x^{2}}\,dx=O(e^{-v^{2}})$.

Then
\begin{align*}
I_{1} & \ =\ O\left(\rho_{\max}^{d}\left[1+\sum_{n=1}^{\infty}e^{-\left[\frac{n-2\Delta\omega}{2\rho_{\max}\Delta\omega}\right]_{+}^{2}}\right]^{d-1}\left[\sum_{n=1}^{\infty}e^{-\left[\frac{n-2\Delta\omega}{2\rho_{\max}\Delta\omega}\right]_{+}^{2}}\right]\right)\\
 & \ =\ O\left(\rho_{\max}^{d}\left[1+e^{-\left[\frac{1}{4\rho_{\max}\Delta\omega}\right]^{2}}\right]^{d-1}\left[e^{-\left[\frac{1}{4\rho_{\max}\Delta\omega}\right]^{2}}\right]\right)\\
 & \ =\ O\left(\rho_{\max}^{d}2^{d-1}e^{-\left[\frac{1}{4\rho_{\max}\Delta\omega}\right]^{2}}\right)\\
 & \ =\ O\left(\rho_{\max}^{d}e^{-\left[\frac{1}{4\rho_{\max}\Delta\omega}\right]^{2}}\right)
\end{align*}
since we treat $d$ as constant.} that $I_{1}$ can be bounded as 
\[
I_{1}=O\left(\rho_{\max}^{d}e^{-\left[\frac{1}{4\rho_{\max}\Delta\omega}\right]^{2}}\right),
\]
 provided that $\Delta\omega\leq1/8$. Combining with (\ref{eq:gausstail})
in (\ref{eq:Ibound}) completes the proof.
\end{proof}
\vspace{2mm} \noindent \begin{center} \rule{0.5\columnwidth}{0.5pt}
\end{center} \vspace{2mm}
\begin{proof}
[Proof of Lemma \ref{lem:stage3}] We employ definition \ref{eq:BWG}
to write

\begin{align*}
A:= & \trip(\Delta\omega)^{d}\,\mathcal{B}_{t}^{(N_{\sigma})}\mathcal{F}^{*}\Pi^{*}\Pi\mathcal{F}\mathcal{B}_{t}^{(N_{\sigma})*}-\mathcal{B}_{t}^{(N_{\sigma})}\mathcal{B}_{t}^{(N_{\sigma})*}\trip\\
 & \quad=\ \lrtrip{(\Delta\omega)^{d}\,\left(\sum_{k=0}^{N_{\sigma}}\mathcal{W}_{k}\mathcal{G}_{k,t}\right)\mathcal{F}^{*}\Pi^{*}\Pi\mathcal{F}\left(\sum_{k=0}^{N_{\sigma}}\mathcal{G}_{k,t}\mathcal{W}_{k}\right)-\left(\sum_{k=0}^{N_{\sigma}}\mathcal{W}_{k}\mathcal{G}_{k,t}\right)\left(\sum_{k=0}^{N_{\sigma}}\mathcal{G}_{k,t}\mathcal{W}_{k}\right)}\\
 & \quad\leq\ \sum_{k,l=0}^{N_{\sigma}}\lrtrip{\mathcal{W}_{k}\left[(\Delta\omega)^{d}\,\mathcal{G}_{k,t}\mathcal{F}^{*}\Pi^{*}\Pi\mathcal{F}\mathcal{G}_{l,t}-\mathcal{G}_{k,t}\mathcal{G}_{l,t}\right]\mathcal{W}_{l}}.
\end{align*}

We want to continue our calculations to derive a suitable bound on
$A$.

Now recall from \ref{eq:Wk} that $\mathcal{W}_{k}$ as a pointwise
multiplier by the function 
\begin{equation}
w_{k}(x):=w(x)\left[2\pi\,\sigma^{2}(x)\right]^{-d/2}P_{k}(\sigma(x)).\label{eq:wk}
\end{equation}
 Therefore it follows form the preceding inequality that 
\[
A\leq\sum_{k,l=0}^{N_{\sigma}}\Vert w_{k}\Vert_{\infty}\Vert w_{l}\Vert_{\infty}\lrtrip{(\Delta\omega)^{d}\,\mathcal{G}_{k,t}\mathcal{F}^{*}\Pi^{*}\Pi\mathcal{F}\mathcal{G}_{l,t}-\mathcal{G}_{k,t}\mathcal{G}_{l,t}},
\]
 where $\Vert\,\cdot\,\Vert_{\infty}$ denotes the $L^{\infty}$ norm
on $[-1,1]^{d}$. Then we can apply Lemma \ref{lem:stage3warmup}
to obtain 
\[
A=O\left(\rho_{\max}^{d}\epsilon_{\mathcal{F}}\left[\sum_{k=0}^{N_{\sigma}}\Vert w_{k}\Vert_{\infty}\right]^{2}\right).
\]

It remains to bound $\Vert w_{k}\Vert_{\infty}$. Recall our assumption
\ref{eq:w_assumption} ensuring that $\Vert w\Vert_{\infty}\leq1$,
and moreover recall that the Chebyshev interpolating functions $P_{k}$
\cite[Chapter 5]{boyd_chebyshev_2001} are bounded in magnitude by
$1$ on their domain $[\sigma_{\min},\sigma_{\max}]$. Therefore from
\ref{eq:wk} it follows that $\Vert w_{k}\Vert_{\infty}=O(\sigma_{\min}^{-d})$,
and consequently 
\[
A=O\left(N_{\sigma}^{2}\sigma_{\min}^{-2d}\rho_{\max}^{d}\epsilon_{\mathcal{F}}\right).
\]
 Since $\rho_{\max}/\sigma_{\min}^{2}=\kappa\frac{\chi_{\max}}{\sigma_{\min}}$,
we have completed the proof.
\end{proof}

\section{Proof of Theorem \ref{thm:main} \label{app:main}}
\begin{proof}
[Proof of Theorem \ref{thm:main}] We will focus first on the general
Mat\'ern case.

By the triangle inequality, we bound 
\begin{align*}
\trip\mathcal{K}-\tilde{\mathcal{K}}\trip & \leq\lrtrip{\mathcal{K}-\sum_{j=0}^{N_{t}}\mathcal{B}_{t_{j}}\mathcal{B}_{t_{j}}^{*}\,v_{j}}\\
 & \quad+\ \lrtrip{\sum_{j=0}^{N_{t}}\mathcal{B}_{t_{j}}\mathcal{B}_{t_{j}}^{*}\,v_{j}-\sum_{j=0}^{N_{t}}\mathcal{B}_{t_{j}}^{(N_{\sigma})}\mathcal{B}_{t_{j}}^{(N_{\sigma})*}\,v_{j}}\\
 & \quad+\ \lrtrip{\sum_{j=0}^{N_{t}}\mathcal{B}_{t_{j}}^{(N_{\sigma})}\mathcal{B}_{t_{j}}^{(N_{\sigma})*}\,v_{j}-\underbrace{(\Delta\omega)^{d}\sum_{j=0}^{N_{t}}\mathcal{B}_{t_{j}}^{(N_{\sigma})}\mathcal{F}^{*}\Pi^{*}\Pi\mathcal{F}\mathcal{B}_{t_{j}}^{(N_{\sigma})*}\,v_{j}}}.
\end{align*}
 The bound is justified because the underbraced expression is precisely
$\tilde{\mathcal{K}}$, following (\ref{eq:Ktilde}). Let the three
terms on the right-hand side be denoted $T_{1}$, $T_{2}$, and $T_{3}$,
respectively. 

First we cite Lemma \ref{lem:dt}, which establishes that
\begin{equation}
T_{1}=O\left(\sigma_{\min}^{-d}\epsilon_{\mathrm{trap}}\right).\label{eq:T1}
\end{equation}

Then we bound, using \ref{lem:sigma}: 
\[
T_{2}\leq\sum_{j=0}^{N_{t}}\vert v_{j}\vert\lrtrip{\mathcal{B}_{t_{j}}\mathcal{B}_{t_{j}}^{*}-\mathcal{B}_{t_{j}}^{(N_{\sigma})}\mathcal{B}_{t_{j}}^{(N_{\sigma})*}}=O\left(\Lambda_{N_{\sigma}}\,\rho_{\max}^{d}\,\epsilon_{\mathrm{cheb}}\sum_{j=0}^{N_{t}}\vert v_{j}\vert\right).
\]
 Now $\vert v_{j}\vert\leq v(t_{j})\,\Delta t$, and moreover $v(t)=\chi^{-d}(t)u(t)=O\left(e^{(\nu-\frac{d}{2})t-e^{t}}\right)$.
Thus $\nu$ is integrable, and $\sum_{j=0}^{N_{t}}\vert v_{j}\vert=O(1)$.
It follows that 
\begin{equation}
T_{2}=O\left(\Lambda_{N_{\sigma}}\,\rho_{\max}^{d}\,\epsilon_{\mathrm{cheb}}\right).\label{eq:T2}
\end{equation}

Finally, using Lemma \ref{lem:stage3}, we bound: 
\begin{align*}
T_{3} & \ \leq\ \sum_{j=0}^{N_{t}}\vert v_{j}\vert\lrtrip{\mathcal{B}_{t_{j}}^{(N_{\sigma})}\mathcal{B}_{t_{j}}^{(N_{\sigma})*}-(\Delta\omega)^{d}\sum_{j=0}^{N_{t}}\mathcal{B}_{t_{j}}^{(N_{\sigma})}\mathcal{F}^{*}\Pi^{*}\Pi\mathcal{F}\mathcal{B}_{t_{j}}^{(N_{\sigma})*}}\\
 & \ =\ O\left(N_{\sigma}^{2}\,\kappa^{d}\left[\frac{\chi_{\max}}{\sigma_{\min}}\right]^{d}\epsilon_{\mathcal{F}}\sum_{j=0}^{N_{t}}\vert v_{j}\vert\right).
\end{align*}
 Again using the fact that $\sum_{j=0}^{N_{t}}\vert v_{j}\vert=O(1)$,
we conclude that 
\begin{equation}
T_{3}=O\left(N_{\sigma}^{2}\,\kappa^{d}\left[\frac{\chi_{\max}}{\sigma_{\min}}\right]^{d}\epsilon_{\mathcal{F}}\right).\label{eq:T3}
\end{equation}
 Combining (\ref{eq:T1}), (\ref{eq:T2}), and (\ref{eq:T3}), we
conclude that 
\[
\trip\mathcal{K}-\tilde{\mathcal{K}}\trip=O\left(\sigma_{\min}^{-d}\epsilon_{\mathrm{trap}}+\Lambda_{N_{\sigma}}\,\rho_{\max}^{d}\,\epsilon_{\mathrm{cheb}}+N_{\sigma}^{2}\,\kappa^{d}\left[\frac{\chi_{\max}}{\sigma_{\min}}\right]^{d}\epsilon_{\mathcal{F}}\right),
\]
 as was to be shown.

The logic in the case of the squared-exponential kernel is exactly
the same, except that there is no integration over $t$ and so the
first error term ($T_{1}$) is discarded.
\end{proof}

\end{document}